\newtheorem{theorem}{Theorem}[section]
\newtheorem{lemma}[theorem]{Lemma}
\newtheorem{remark}[theorem]{Remark}
\newcommand{\bs }{\boldsymbol}
\newcommand{\sign}{\operatorname{sign}}
\newcommand{\curl}{\operatorname{curl}}
\renewcommand{\div}{\operatorname{div}}
\newcommand{\grad}{\operatorname{grad}}
\DeclareMathOperator*{\rot}{rot}
\newcommand{\sym}{\operatorname{sym}}
\newcommand{\defm}{\operatorname{def}}
\begin{document}
\title[Finite elements for divdiv-conforming symmetric tensors]{Finite elements for divdiv-conforming symmetric tensors}
\author{Long Chen}%
\address{Department of Mathematics, University of California at Irvine, Irvine, CA 92697, USA}%
\email{chenlong@math.uci.edu}%
\author{Xuehai Huang}%
\address{School of Mathematics, Shanghai University of Finance and Economics, Shanghai 200433, China}%
\email{huang.xuehai@sufe.edu.cn}%

\thanks{The first author was supported by NSF DMS-1913080.}
\thanks{The second author was supported by the National Natural Science Foundation of China Project 11771338 and the Fundamental Research Funds for the Central
Universities 2019110066.}


%
\begin{abstract}
Two types of finite element spaces on triangles are constructed for div-div conforming symmetric tensors. Besides the normal-normal continuity, the stress tensor is continuous at vertices and another trace involving combination of derivatives of stress is identified. Polynomial complex, finite element complex, and Hilbert complex are presented and a commuting diagram between them is given. The constructed div-div conforming elements are exploited to discretize the mixed formulation of the biharmonic equation. Optimal order and superconvergence error analysis is provided. By rotation, finite elements for rot-rot conforming symmetric strain are also obtained.
\end{abstract}
\maketitle


\section{Introduction}
In this paper, we shall construct finite element spaces of symmetric stress tensor conforming to the $\div \boldsymbol  \div$ operator and also present the Hilbert complex for finite element spaces.

Let $\Omega$ be a bounded polygon in $\mathbb R^2$. In \cite{ChenHuHuang2018}, we have found the Hilbert complexes and the commutative diagram
$$
\begin{array}{c}
\xymatrix{
  \boldsymbol{RT} \ar[r]^-{\subset} & \boldsymbol{H}^{1}(\Omega; \mathbb{R}^2) \ar[d]^{\boldsymbol{I}_h} \ar[r]^-{\sym\boldsymbol \curl}
                & \boldsymbol{H}^{-1}(\mathrm{div}\boldsymbol{\mathrm{div}},\Omega; \mathbb{S}) \ar[d]^{\boldsymbol{\Pi}_h}   \ar[r]^-{\mathrm{div}\boldsymbol{\mathrm{div}}} & \ar[d]^{Q_h}H^{-1}(\Omega) \ar[r]^{} & 0 \\
 \boldsymbol  {RT} \ar[r]^{\subset} & \mathcal{S}_h \ar[r]^{\sym\boldsymbol \curl}
                & \mathcal{V}_h   \ar[r]^{(\mathrm{div}\boldsymbol{\mathrm{div}})_h} &  \mathcal{P}_h \ar[r]^{}& 0    }
\end{array},
$$
where $\boldsymbol{RT}$ is the lowest order Raviart-Thomas element on $\Omega$ \cite{RaviartThomas1977}, $\mathcal S_h$ is the vector Lagrange element of degree $k+1$, $\mathcal V_h$ is the Hellan-Herrmann-Johnson (HHJ)  (cf.~\cite{Hellan1967, Herrmann1967, Johnson1973}) element of degree $k$, and $\mathcal P_h$ is the scalar Lagrange space of degree $k+1$ based on a shape regular triangulation $\mathcal T_h$ of $\Omega$. Details on the spaces and interpolation operators can be found in \cite[Section 2.2]{ChenHuHuang2018}.
The negative Sobolev space
\[
\boldsymbol{H}^{-1}(\div \boldsymbol{\div },\Omega; \mathbb{S}):=\{\boldsymbol{\tau}\in \boldsymbol{L}^{2}(\Omega; \mathbb{S}): \div \mathbf{div}\boldsymbol{\tau}\in H^{-1}(\Omega)\}
\]
with squared norm $\|\boldsymbol{\tau}\|_{\boldsymbol{H}^{-1}(\div \boldsymbol{\div })}^2:=\|\boldsymbol{\tau}\|_{0}^2+\|\div \boldsymbol{\div }\boldsymbol{\tau}\|_{-1}^2$, where $\mathbb{S}$ is the space of all symmetric $2\times2$ tensor. We refer to \cite{Sinwel2009,PechsteinSchoberl2011,PechsteinSchoeberl2018,KrendlRafetsederZulehner2016,ChenHuang2018,RafetsederZulehner2018} for details on the space $\boldsymbol{H}^{-1}(\div \boldsymbol{\div },\Omega; \mathbb{S})$.
It is very difficult to construct $\boldsymbol H^{-1}(\div\div)$-conforming but $\boldsymbol H(\div)$-nonconforming symmetric tensor, cf. \cite[Remark 3.33]{Sinwel2009}, \cite[Remark 2.1]{PechsteinSchoberl2011} and \cite[page 108]{PechsteinSchoeberl2018}.
Alternatively, with the help of the physical quantities normal bending moment, twisting moment and effective transverse shear force, hybridized mixed methods can be employed to discretize the mixed formulation of the Kirchhoff-Love plate bending problem, such as the hybridized discontinuous Galerkin method in \cite{HuangHuang2019} and the discontinuous Petrov-Galerkin method in \cite{Fuhrer;Heuer:2019discrete}.

A more natural Sobolev space for $\div\boldsymbol{\div}$ operator is
\[
\boldsymbol{H}(\div \boldsymbol{\div },\Omega; \mathbb{S}):=\{\boldsymbol{\tau}\in \boldsymbol{L}^{2}(\Omega; \mathbb{S}): \div \mathbf{div}\boldsymbol{\tau}\in L^{2}(\Omega)\}
\]
with squared norm $\|\boldsymbol{\tau}\|_{\boldsymbol{H}(\div \boldsymbol{\div })}^2:=\|\boldsymbol{\tau}\|_{0}^2+\|\div \boldsymbol{\div }\boldsymbol{\tau}\|_{0}^2$.
The corresponding Hilbert complex is
\begin{equation}\label{eq:divdivcomplexL2-intro}
\resizebox{.9\hsize}{!}{$
\boldsymbol  {RT}\autorightarrow{$\subset$}{} \boldsymbol  H^1(\Omega;\mathbb R^2)\autorightarrow{$\sym\boldsymbol \curl$}{} \boldsymbol{H}(\div\boldsymbol{\div},\Omega; \mathbb{S}) \autorightarrow{$\div \boldsymbol{\div }$}{} L^2(\Omega)\autorightarrow{}{}0$}.
\end{equation}

Conforming finite element spaces for $\boldsymbol  H^1(\Omega;\mathbb R^2)$ are relatively easy to construct and the natural finite element space for $L^2(\Omega)$ is discontinuous polynomial spaces. The following question arises quite naturally: can we construct conforming finite element spaces for $\boldsymbol{H}(\div \boldsymbol{\div },\Omega; \mathbb{S})$ such that the complex \eqref{eq:divdivcomplexL2-intro} is preserved in the discrete case? The purpose of this paper is to give a positive answer. We will answer this question by constructing two types of finite element spaces on triangles which resembles the RT and BDM spaces for $\boldsymbol  H(\boldsymbol {\div},\Omega)$ \cite{BoffiBrezziFortin2013} in the vector case. Furthermore we shall construct the following commuting diagram
$$
\begin{array}{c}
\xymatrix{
  \boldsymbol{RT} \ar[r]^-{\subset} & \boldsymbol  H^1(\Omega;\mathbb R^2) \ar[d]^{\boldsymbol{I}_h} \ar[r]^-{\sym\boldsymbol \curl}
                & \boldsymbol{H}(\div \boldsymbol  \div, \Omega; \mathbb{S}) \ar[d]^{\boldsymbol{\Pi}_h}   \ar[r]^-{\div\boldsymbol{\div}} & \ar[d]^{Q_{h}}L^2(\Omega) \ar[r]^{} & 0 \\
 \boldsymbol{RT} \ar[r]^-{\subset} & \boldsymbol V_{\ell+1} \ar[r]^{\sym\boldsymbol \curl}
                &  \boldsymbol \Sigma_{\ell,k}   \ar[r]^{\div\boldsymbol{\div}} &  \mathcal Q_{h} \ar[r]^{}& 0    }
\end{array},
$$
where the domain of the interpolation operators are smoother subspaces of the spaces in the top complex. Details of spaces and operators can be found in Sections 2 and 3, respectively.

We give a glimpse on the conforming space of stress. Let $K$ be a triangle. The set of edges of $K$ is denoted by $\mathcal E(K)$ and the vertices by $\mathcal V(K)$. The shape function space is simply $\mathbb P_k(K;\mathbb S)$. The degree of freedom are given by
\begin{align*}
\boldsymbol \tau (\delta) & \quad\forall~\delta\in \mathcal V(K),\\
(\boldsymbol  n^{\intercal}\boldsymbol \tau\boldsymbol  n, q)_e & \quad\forall~q\in\mathbb P_{k-2}(e),  e\in\mathcal E(K),\\
(\partial_{t}(\boldsymbol  t^{\intercal}\boldsymbol \tau\boldsymbol  n)+\boldsymbol  n^{\intercal}\boldsymbol \div\boldsymbol \tau, q)_e & \quad\forall~q\in\mathbb P_{k-1}(e),  e\in\mathcal E(K),\\
(\boldsymbol \tau, \boldsymbol \varsigma)_K & \quad\forall~\boldsymbol \varsigma\in\nabla^2\mathbb P_{k-2}(K)\oplus\sym (\bs x^{\perp}\otimes \mathbb P_{k-2}(K;\mathbb R^2)).
\end{align*}

By rotation, we obtain conforming finite elements for $\boldsymbol{H}(\rot \boldsymbol{\rot },\Omega; \mathbb{S})$ with the shape functions being polynomials.
The conforming finite element strain complex and the corresponding commutative diagram are also constructed.
Some lower-order $\boldsymbol H(\rot\boldsymbol\rot)$-conforming finite elements are advanced in \cite{ChristiansenHu2019}, whose shape functions are piecewise polynomials based on the Clough-Tocher split of the triangle.

Then the $\boldsymbol H(\div\boldsymbol\div)$-conforming finite elements are exploited to discretize the mixed formulation of the biharmonic equation.
The discrete inf-sup condition follows from the commutative diagram for the $\div$-$\div$ complex, and we derive the optimal convergence of the mixed finite element methods. Furthermore, the discrete inf-sup condition based on mesh-dependent norms is established, by which we acquire a third or fourth order higher superconvergence of $|Q_hu-u_h|_{2,h}$ than the optimal one. With the help of this superconvergence, a new superconvergent discrete deflection is devised by postprocessing. Hybridization is also provided for the easy of implementation.

The rest of this paper is organized as follows. In Section 2, we present the Green's identity for div-div operator and analyze the trace of the Sobolev space $\boldsymbol{H}(\div \boldsymbol  \div, \Omega; \mathbb{S})$. In Section 3, the conforming finite elements for $\boldsymbol{H}(\div \boldsymbol{\div },\Omega; \mathbb{S})$ and $\boldsymbol{H}(\rot\boldsymbol{\rot },\Omega; \mathbb{S})$, the finite element div-div complex and the finite element strain complex are constructed.
Mixed finite element methods for the biharmonic equation are developed and analyzed in Section 4.


\section{Div-div Conforming Symmetric Tensor Space}
In this section, we shall study the Sobolev space $\boldsymbol{H}(\div \boldsymbol  \div, \Omega; \mathbb{S})$ for div-div operator. We first present a Green's identity based on which we can characterize the trace of $ \boldsymbol{H}(\div \boldsymbol  \div, \Omega; \mathbb{S}) $ on polygons and give a sufficient continuity condition for a piecewise smooth function to be in $ \boldsymbol{H}(\div \boldsymbol  \div, \Omega; \mathbb{S}) $.

\subsection{Notation}
Denote the space of all  $2\times2$ matrix by $\mathbb{M}$, all symmetric $2\times2$ matrix by $\mathbb{S}$, and all skew-symmetric $2\times2$ matrix by $\mathbb{K}$.  Given a bounded domain $G\subset\mathbb{R}^{2}$ and a
non-negative integer $m$, let $H^m(G)$ be the usual Sobolev space of functions
on $G$, and $\boldsymbol{H}^m(G; \mathbb{X})$ be the usual Sobolev space of functions taking values in the finite-dimensional vector space $\mathbb{X}$ for $\mathbb{X}$ being $\mathbb M$, $\mathbb{S}$, $\mathbb{K}$ or $\mathbb{R}^2$. The corresponding norm and semi-norm are denoted respectively by
$\Vert\cdot\Vert_{m,G}$ and $|\cdot|_{m,G}$.  If $G$ is $\Omega$, we abbreviate
them by $\Vert\cdot\Vert_{m}$ and $|\cdot|_{m}$,
respectively. Let $H_{0}^{m}(G)$ be the closure of $C_{0}^{\infty}(G)$ with
respect to the norm $\Vert\cdot\Vert_{m,G}$.  $\mathbb P_m(G)$ stands for the set of all polynomials in $G$ with the total degree no more than $m$, and $\mathbb P_m(G; \mathbb{X})$ denotes the tensor or vector version.
Let $Q_m^G$ be the $L^2$-orthogonal projection operator onto $\mathbb P_m(G)$.
For $G$ being a polygon, denote by $\mathcal{E}(G)$ the set of all edges of $G$, $\mathcal{E}^i(G)$ the set of all
interior edges of $G$ and $\mathcal{V}(G)$ the set of all vertices of $G$.

Let $\{\mathcal {T}_h\}_{h>0}$ be a regular family of polygonal meshes
of $\Omega$. Our finite element spaces are constructed for triangles but some results, e.g., traces and Green's formulae etc, hold for general polygons.   For each element $K\in\mathcal{T}_h$, denote by $\boldsymbol{n}_K = (n_1, n_2)^{\intercal}$ the
unit outward normal to $\partial K$ and write $\boldsymbol{t}_K:= (t_1, t_2)^{\intercal} = (-n_2, n_1)^{\intercal}$, a unit vector
tangent to $\partial K$. Without causing any confusion, we will abbreviate $\boldsymbol{n}_K$ and $\boldsymbol{t}_K$ as $\boldsymbol{n}$ and $\boldsymbol{t}$ respectively for simplicity.
Let $\mathcal{E}_h$, $\mathcal{E}^i_h$, $\mathcal{V}_h$ and $\mathcal{V}^i_h$ be the union of all edges, interior edges, vertices and interior vertices
of the partition $\mathcal {T}_h$, respectively.
For any $e\in\mathcal{E}_h$,
fix a unit normal vector $\boldsymbol{n}_e:= (n_1, n_2)^{\intercal}$ and a unit tangent vector $\boldsymbol{t}_e := (-n_2, n_1)^{\intercal}$.

For a column vector function $\boldsymbol \phi = (\phi_1, \phi_2)^{\intercal}$, differential operators for scalar functions will be applied row-wise to produce a matrix function. Similarly for a matrix function, differential operators for vector functions are applied row-wise. For a scalar function $\phi$, $\curl \phi:= (\partial_{x_2} \phi, -\partial_{x_1} \phi)^{\intercal}$ with $\boldsymbol  x=(x_1, x_2)^{\intercal}$ and for a vector function $\boldsymbol  v$, $\boldsymbol  \curl$ is applied row-wise and the result $\boldsymbol  \curl \, \boldsymbol  v$ is a matrix, whose symmetric part is denoted by $\sym\boldsymbol \curl$. That is $\sym\boldsymbol \curl \, \boldsymbol  v := (\boldsymbol  \curl \, \boldsymbol  v + (\boldsymbol  \curl \, \boldsymbol  v)^{\intercal})/2$.

\subsection{Green's identity}
We start from the Green's identity for smooth functions but on polygons.
\begin{lemma} [Green's identity]\label{lm:Green}
Let $K$ be a polygon, and let $\boldsymbol  \tau\in \mathcal C^2(K; \mathbb S)$ and $v\in H^2(K)$. Then we have
\begin{align}
(\div\boldsymbol \div\boldsymbol \tau, v)_K&=(\boldsymbol \tau, \nabla^2v)_K -\sum_{e\in\mathcal E(K)}\sum_{\delta\in\partial e}\sign_{e,\delta}(\boldsymbol  t^{\intercal}\boldsymbol \tau\boldsymbol  n)(\delta)v(\delta) \notag\\
&\quad - \sum_{e\in\mathcal E(K)}\left[(\boldsymbol  n^{\intercal}\boldsymbol \tau\boldsymbol  n, \partial_n v)_{e}-(\partial_{t}(\boldsymbol  t^{\intercal}\boldsymbol \tau\boldsymbol  n)+\boldsymbol  n^{\intercal}\boldsymbol \div\boldsymbol \tau,  v)_{e}\right], \label{eq:greenidentitydivdiv}
\end{align}
where
\[
\sign_{e,\delta}:=\begin{cases}
1, & \textrm{ if } \delta \textrm{ is the end point of } e, \\
-1, & \textrm{ if } \delta \textrm{ is the start point of } e.
\end{cases}
\]
\end{lemma}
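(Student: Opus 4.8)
The plan is to obtain \eqref{eq:greenidentitydivdiv} by applying the standard (vector) divergence theorem twice, peeling off one divergence at a time, and then carefully re-assembling the boundary contributions using a tangential integration by parts on each edge. First I would write $\div\boldsymbol\div\boldsymbol\tau = \div(\boldsymbol\div\boldsymbol\tau)$ and apply the scalar divergence theorem to the vector field $v\,\boldsymbol\div\boldsymbol\tau$, giving
\[
(\div\boldsymbol\div\boldsymbol\tau,v)_K = -(\boldsymbol\div\boldsymbol\tau,\nabla v)_K + \sum_{e\in\mathcal E(K)}(\boldsymbol n^{\intercal}\boldsymbol\div\boldsymbol\tau,\,v)_e .
\]
Next I would treat the term $(\boldsymbol\div\boldsymbol\tau,\nabla v)_K$: since $\boldsymbol\tau$ is symmetric, integrating by parts row-wise (again the divergence theorem, applied to the rows of $\boldsymbol\tau$ tested against the components of $\nabla v$) yields
\[
(\boldsymbol\div\boldsymbol\tau,\nabla v)_K = -(\boldsymbol\tau,\nabla^2 v)_K + \sum_{e\in\mathcal E(K)}(\boldsymbol\tau\boldsymbol n,\,\nabla v)_e .
\]
Combining the two identities gives the volume term $(\boldsymbol\tau,\nabla^2 v)_K$ with the correct sign, and leaves the edge sum $\sum_e\big[(\boldsymbol n^{\intercal}\boldsymbol\div\boldsymbol\tau,v)_e - (\boldsymbol\tau\boldsymbol n,\nabla v)_e\big]$ to be rewritten.

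The key step is to decompose the boundary integrand into normal and tangential parts on each edge. On an edge $e$ I would use the orthonormal frame $\{\boldsymbol n,\boldsymbol t\}$ to write $\nabla v = (\partial_n v)\boldsymbol n + (\partial_t v)\boldsymbol t$, so that
\[
(\boldsymbol\tau\boldsymbol n,\nabla v)_e = (\boldsymbol n^{\intercal}\boldsymbol\tau\boldsymbol n,\,\partial_n v)_e + (\boldsymbol t^{\intercal}\boldsymbol\tau\boldsymbol n,\,\partial_t v)_e .
\]
The first term is already one of the desired boundary terms. For the second term I would integrate by parts along the edge $e$ (a one-dimensional integration by parts in the arclength variable), which produces $-(\partial_t(\boldsymbol t^{\intercal}\boldsymbol\tau\boldsymbol n),v)_e$ together with endpoint contributions $\big[(\boldsymbol t^{\intercal}\boldsymbol\tau\boldsymbol n)\,v\big]$ evaluated at the two endpoints of $e$; orienting $e$ by $\boldsymbol t$ and recording the endpoint signs via $\sign_{e,\delta}$ gives exactly the vertex sum $\sum_{e}\sum_{\delta\in\partial e}\sign_{e,\delta}(\boldsymbol t^{\intercal}\boldsymbol\tau\boldsymbol n)(\delta)v(\delta)$. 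Collecting all pieces, the edge sum becomes $\sum_e\big[(\boldsymbol n^{\intercal}\boldsymbol\div\boldsymbol\tau + \partial_t(\boldsymbol t^{\intercal}\boldsymbol\tau\boldsymbol n),v)_e - (\boldsymbol n^{\intercal}\boldsymbol\tau\boldsymbol n,\partial_n v)_e\big]$ minus the vertex sum, which is precisely the right-hand side of \eqref{eq:greenidentitydivdiv}.

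The main obstacle — really the only delicate point — is the bookkeeping of signs and orientations in the edge-wise tangential integration by parts: one must fix, consistently across all edges of the polygon, a tangential orientation $\boldsymbol t$ and the induced ordering of endpoints, and check that the boundary-of-boundary terms at each vertex $\delta$ collect with the sign $\sign_{e,\delta}$ as defined. (On a convex polygon with the boundary traversed counterclockwise this is transparent, but the statement is for a general polygon, so I would phrase the edge integration by parts intrinsically in terms of the start/end points of $e$ relative to $\boldsymbol t_e$.) Everything else is the divergence theorem applied to $\mathcal C^2$ tensors and $H^2$ scalars, for which the regularity hypotheses $\boldsymbol\tau\in\mathcal C^2(K;\mathbb S)$ and $v\in H^2(K)$ are exactly what is needed to justify each integration by parts and the traces on $\partial K$.
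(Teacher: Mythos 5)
Your proposal is correct and follows exactly the same route as the paper's proof: integrate by parts twice to produce $(\boldsymbol\tau,\nabla^2 v)_K$, split the edge term $(\boldsymbol\tau\boldsymbol n,\nabla v)_e$ into normal and tangential parts, and integrate by parts along each edge to generate the $\partial_t(\boldsymbol t^{\intercal}\boldsymbol\tau\boldsymbol n)$ term plus the signed vertex contributions. The extra remarks on fixing tangential orientations and the $\sign_{e,\delta}$ bookkeeping are sound and match the intent, if slightly more verbose than the paper's one-line edge integration by parts.
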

\begin{proof}
We start from the standard integration by parts
\begin{align*}
\begin{aligned}
(\operatorname{div} \boldsymbol  \div \boldsymbol  \tau, v)_{K} &=-(\boldsymbol  \div  \boldsymbol   \tau, \nabla v)_{K}+\sum_{e \in \mathcal E(K)}(\boldsymbol  n^{\intercal}\boldsymbol  \div  \boldsymbol   \tau, v)_e \\
&=\left(\boldsymbol  \tau, \nabla^{2} v\right)_{K}-\sum_{e \in \mathcal E(K)} (\boldsymbol  \tau \boldsymbol  n, \nabla v)_e+\sum_{e \in \mathcal E(K)}(\boldsymbol  n^{\intercal}\boldsymbol  \div  \boldsymbol   \tau, v)_e.
\end{aligned}
\end{align*}
Now we expand $(\boldsymbol  \tau \boldsymbol  n, \nabla v)_e=(\boldsymbol  n^{\intercal} \boldsymbol  \tau \boldsymbol  n,  \partial_{n} v)_e+ (\boldsymbol  t^{\intercal} \boldsymbol  \tau \boldsymbol  n, \partial_{t} v)_e$ and apply integration by parts on each edge to the second term
$$
(\boldsymbol  t^{\intercal} \boldsymbol  \tau \boldsymbol  n, \partial_{t} v)_e=\sum_{\delta \in \partial e} \sign_{e,\delta}(\boldsymbol  t^{\intercal}\boldsymbol \tau\boldsymbol  n)(\delta)v(\delta)-(\partial_ t\left( \boldsymbol  t^{\intercal} \boldsymbol  \tau \boldsymbol  n\right), v)_e
$$
to finish the proof.
\end{proof}

In the context of elastic mechanics~\cite{FengShi1996}, $\boldsymbol  n^{\intercal}\boldsymbol \tau\boldsymbol  n$ and $\partial_{t}(\boldsymbol  t^{\intercal}\boldsymbol \tau\boldsymbol  n)+\boldsymbol  n^{\intercal}\boldsymbol \div\boldsymbol \tau$ are called
normal bending moment and effective transverse shear force respectively for $\bs\tau$ being a moment.

For a scalar function $\phi$, due to the rotation relation, $\boldsymbol  n^{\intercal} \curl \phi = \boldsymbol  t^{\intercal} \grad \phi =  \partial_t \phi$ and $\boldsymbol  t^{\intercal} \curl \phi = -\boldsymbol  n^{\intercal} \grad \phi = -\partial_n \phi$. For vector and matrix functions, we have the following relations.
\begin{lemma}\label{lm:tauv}
When $\boldsymbol  \tau = \sym \boldsymbol  \curl \, \boldsymbol  v$, we have the following identities
\begin{align}
\label{eq:trace1} \boldsymbol  n^{\intercal}\boldsymbol \tau\boldsymbol  n &= \boldsymbol  n^{\intercal} \partial_t \boldsymbol  v,\\
\label{eq:trace2} \partial_{t}(\boldsymbol  t^{\intercal}\boldsymbol \tau\boldsymbol  n)+\boldsymbol  n^{\intercal}\boldsymbol \div\boldsymbol \tau & =  \partial_t(\boldsymbol  t^{\intercal}\partial_t\boldsymbol  v).
\end{align}
\end{lemma}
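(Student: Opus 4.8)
The plan is to express both sides of \eqref{eq:trace1} and \eqref{eq:trace2} in terms of the Cartesian components of $\boldsymbol v = (v_1, v_2)^{\intercal}$ and reduce each identity to an elementary calculation. First I would record the component form of $\boldsymbol\tau = \sym\boldsymbol\curl\,\boldsymbol v$: since $\curl$ is applied row-wise, $\boldsymbol\curl\,\boldsymbol v$ has rows $\curl v_1 = (\partial_{x_2}v_1, -\partial_{x_1}v_1)^{\intercal}$ and $\curl v_2 = (\partial_{x_2}v_2, -\partial_{x_1}v_2)^{\intercal}$, so
\[
\boldsymbol\tau = \frac{1}{2}\begin{pmatrix} 2\partial_{x_2}v_1 & \partial_{x_2}v_2 - \partial_{x_1}v_1 \\ \partial_{x_2}v_2 - \partial_{x_1}v_1 & -2\partial_{x_1}v_2 \end{pmatrix}.
\]
A cleaner route, which I would prefer, is to avoid components altogether and work with the tangential derivative. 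Observe that for any fixed unit vector $\boldsymbol n$ the scalar $\boldsymbol n^{\intercal}\boldsymbol\tau\boldsymbol n = \boldsymbol n^{\intercal}(\sym\boldsymbol\curl\,\boldsymbol v)\boldsymbol n = \boldsymbol n^{\intercal}(\boldsymbol\curl\,\boldsymbol v)\boldsymbol n$, because the skew part drops out of the quadratic form. Then using the row-wise definition of $\boldsymbol\curl$ together with the rotation identity already noted in the excerpt, namely $\curl\phi \cdot \boldsymbol n = \partial_t\phi$ for a scalar $\phi$, applied to each component of $\boldsymbol v$, one gets $(\boldsymbol\curl\,\boldsymbol v)\boldsymbol n = \partial_t\boldsymbol v$, hence $\boldsymbol n^{\intercal}\boldsymbol\tau\boldsymbol n = \boldsymbol n^{\intercal}\partial_t\boldsymbol v$, which is \eqref{eq:trace1}.

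For \eqref{eq:trace2} I would similarly compute $\boldsymbol t^{\intercal}\boldsymbol\tau\boldsymbol n$ and $\boldsymbol n^{\intercal}\boldsymbol\div\boldsymbol\tau$ separately. From $(\boldsymbol\curl\,\boldsymbol v)\boldsymbol n = \partial_t\boldsymbol v$ we get $\boldsymbol t^{\intercal}(\boldsymbol\curl\,\boldsymbol v)\boldsymbol n = \boldsymbol t^{\intercal}\partial_t\boldsymbol v$; the skew part now contributes, so I must keep track of it: writing $\skw\boldsymbol\curl\,\boldsymbol v = \tfrac12(\boldsymbol\curl\,\boldsymbol v - (\boldsymbol\curl\,\boldsymbol v)^{\intercal})$, a short computation shows $\boldsymbol t^{\intercal}(\skw\boldsymbol\curl\,\boldsymbol v)\boldsymbol n$ equals $-\tfrac12(\div\boldsymbol v)$ up to a sign I would pin down, so $\boldsymbol t^{\intercal}\boldsymbol\tau\boldsymbol n = \boldsymbol t^{\intercal}\partial_t\boldsymbol v - \tfrac12\,\boldsymbol t^{\intercal}(\skw\boldsymbol\curl\,\boldsymbol v)^{\intercal}\boldsymbol n$ rearranges into $\boldsymbol t^{\intercal}\partial_t\boldsymbol v$ minus (half of) something expressible through $\div\boldsymbol v$. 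Then $\partial_t$ of this is $\partial_t(\boldsymbol t^{\intercal}\partial_t\boldsymbol v)$ plus a correction term along the straight edge (where $\boldsymbol t$ is constant). Next I would compute $\boldsymbol n^{\intercal}\boldsymbol\div\boldsymbol\tau$. Using $\boldsymbol\div(\sym\boldsymbol\curl\,\boldsymbol v) = \tfrac12\boldsymbol\div(\boldsymbol\curl\,\boldsymbol v) + \tfrac12\boldsymbol\div((\boldsymbol\curl\,\boldsymbol v)^{\intercal})$ and the facts that $\operatorname{div}\curl\phi = 0$ and $\boldsymbol\div(\boldsymbol\curl\,\boldsymbol v)^{\intercal} = \curl(\div\boldsymbol v)$ (the row-wise conventions make this a two-line check), the correction terms from the two pieces should cancel and leave exactly $\partial_t(\boldsymbol t^{\intercal}\partial_t\boldsymbol v)$ after adding $\partial_t(\boldsymbol t^{\intercal}\boldsymbol\tau\boldsymbol n)$ and $\boldsymbol n^{\intercal}\boldsymbol\div\boldsymbol\tau$.

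The main obstacle is purely bookkeeping: getting the signs right in the row-wise/column-wise conventions for $\curl$, $\boldsymbol\curl$, $\boldsymbol\div$, and in the tangential/normal rotation $\boldsymbol t = (-n_2, n_1)^{\intercal}$, and making sure the derivative $\partial_t$ acts only on the scalar fields and not on the (locally constant) frame $\{\boldsymbol n, \boldsymbol t\}$ along an edge. To contain the risk of sign errors I would actually carry out the verification in coordinates on a reference edge aligned with the $x_1$-axis, so that $\boldsymbol n = (0,1)^{\intercal}$, $\boldsymbol t = (1,0)^{\intercal}$, $\partial_t = \partial_{x_1}$, reduce both identities to scalar statements about partial derivatives of $v_1, v_2$, check them there, and then invoke rotational invariance of all the operators involved ($\sym\boldsymbol\curl$, $\boldsymbol\div$, and the quadratic/bilinear forms $\boldsymbol n^{\intercal}\cdot\boldsymbol n$, $\boldsymbol t^{\intercal}\cdot\boldsymbol n$) to conclude the general case. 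No deeper idea is needed; this is a coordinate computation dressed in invariant notation.
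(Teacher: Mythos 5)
Your plan matches the paper's proof: identity \eqref{eq:trace1} follows directly from $(\boldsymbol\curl\,\boldsymbol v)\boldsymbol n=\partial_t\boldsymbol v$ with the skew part dropping out of the quadratic form, and for \eqref{eq:trace2} the paper likewise computes $\boldsymbol n^{\intercal}\boldsymbol\div\boldsymbol\tau=\tfrac12\boldsymbol n^{\intercal}\boldsymbol\div(\boldsymbol\curl\,\boldsymbol v)^{\intercal}=\tfrac12\partial_t\div\boldsymbol v$ (using $\boldsymbol\div\boldsymbol\curl\,\boldsymbol v=0$) and combines it with $\boldsymbol t^{\intercal}\boldsymbol\tau\boldsymbol n$ via the tangential--normal decomposition $\div\boldsymbol v=\boldsymbol t^{\intercal}\partial_t\boldsymbol v+\boldsymbol n^{\intercal}\partial_n\boldsymbol v$, which is the content of your ``correction terms cancel.'' One small bookkeeping slip you flagged yourself: since $\sym A = A - \skw A$, the term should be $\boldsymbol t^{\intercal}(\skw\boldsymbol\curl\,\boldsymbol v)\boldsymbol n$ with no extra $\tfrac12$ and no transpose, and in fact $\skw\boldsymbol\curl\,\boldsymbol v = -\tfrac12(\div\boldsymbol v)\left(\begin{smallmatrix}0&1\\-1&0\end{smallmatrix}\right)$, giving $\boldsymbol t^{\intercal}\boldsymbol\tau\boldsymbol n=\boldsymbol t^{\intercal}\partial_t\boldsymbol v-\tfrac12\div\boldsymbol v$, after which everything cancels as you predicted.
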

\begin{proof}
The first one is a straight forward calculation using $(\boldsymbol  \curl \, \boldsymbol  v)\boldsymbol n =  \partial_t \boldsymbol  v$. We now focus on the second one.
Since $\boldsymbol  \div \boldsymbol  \curl \, \boldsymbol  v = 0$, we have
$$\boldsymbol  n^{\intercal}\boldsymbol \div\boldsymbol \tau  = \frac{1}{2}\boldsymbol  n^{\intercal} \boldsymbol  \div (\boldsymbol  \curl \, \boldsymbol  v)^{\intercal} = \frac{1}{2}\boldsymbol  n^{\intercal} \curl \div \boldsymbol  v =  \frac{1}{2} \partial_t \div \boldsymbol  v.$$
As $\div \boldsymbol  v = {\rm trace} (\boldsymbol  \nabla \boldsymbol  v)$ is invariant to the rotation, we can write it as
$$
\div \boldsymbol  v = \boldsymbol  t^{\intercal}\boldsymbol  \nabla \boldsymbol  v \boldsymbol  t + \boldsymbol  n^{\intercal}\boldsymbol  \nabla \boldsymbol  v \boldsymbol  n =  \boldsymbol  t^{\intercal}\partial_t\boldsymbol  v + \boldsymbol  n^{\intercal}\partial_n\boldsymbol  v.
$$
Then
$$
 \partial_{t}(\boldsymbol  t^{\intercal}\boldsymbol \tau\boldsymbol  n)+\boldsymbol  n^{\intercal}\boldsymbol \div\boldsymbol \tau  =\frac{1}{2}\partial_t[  \boldsymbol  t^{\intercal}\partial_t\boldsymbol  v -  \boldsymbol  n^{\intercal}\partial_n\boldsymbol  v + \div\boldsymbol  v]  =  \partial_t  (\boldsymbol  t^{\intercal}\partial_t\boldsymbol  v),
 $$
 i.e. \eqref{eq:trace2} holds.
\end{proof}

\subsection{Traces}
Next we recall the trace of the space $\boldsymbol{H}(\div \boldsymbol{\div },K; \mathbb{S})$ on the boundary of polygon $K$. {Detailed proofs of the following trace operators can be found in \cite[Theorem 2.2]{Amara;Capatina-Papaghiuc;Chatti:2002Bending} for 2-D domains and \cite[Lemma 3.2]{Fuhrer;Heuer;Niemi:2019ultraweak} for both 2-D and 3-D domains.} The normal-normal trace of $\boldsymbol{H}(\div \boldsymbol{\div },K; \mathbb{S})$ can be also found in \cite{Sinwel2009,PechsteinSchoeberl2018}.

Define trace space
\begin{align*}
H_{n,0}^{1/2}(\partial K)&:=\{\partial_n v|_{\partial K}: v\in H^2(K)\cap H_0^1(K)\} \\
&\;=\{g\in L^2(\partial K): g|_e\in H_{00}^{1/2}(e)\;\;\forall~e\in\mathcal E(K)\}
\end{align*}
with norm
\[
\|g\|_{H_{n,0}^{1/2}(\partial K)}:=\inf_{v\in H^2(K)\cap H_0^1(K)\atop \partial_n v=g}\|v\|_2.
\]
Let $H_n^{-1/2}(\partial K):=(H_{n,0}^{1/2}(\partial K))'$. Note that for a 2D polygon $K$, and $v\in H^2(K)\cap H_0^1(K)$, the normal derivative $\partial_n v|_{e}\in H_{00}^{1/2}(e)$ for boundary edge $e\in \partial K$ can be derived from the compatible condition for traces on polygonal domains \cite[Theorem 1.5.2.8]{Grisvard:2011Elliptic}.

\begin{lemma}
For any $\boldsymbol \tau\in\boldsymbol{H}(\div \boldsymbol{\div },K; \mathbb{S})$,  it holds
\[
\|\boldsymbol  n^{\intercal}\boldsymbol \tau\boldsymbol  n\|_{H_n^{-1/2}(\partial K)}\lesssim \|\boldsymbol{\tau}\|_{\boldsymbol{H}(\div \boldsymbol{\div })}.
\]
Conversely, for any $g\in H_n^{-1/2}(\partial K)$, there exists some $\boldsymbol \tau\in\boldsymbol{H}(\div \boldsymbol{\div },K; \mathbb{S})$ such that
\[
\boldsymbol  n^{\intercal}\boldsymbol \tau\boldsymbol  n|_{\partial K}=g, \quad
\|\boldsymbol{\tau}\|_{\boldsymbol{H}(\div \boldsymbol{\div })} \lesssim \|g\|_{H_n^{-1/2}(\partial K)}.
\]
The hidden constants depend only the shape of the domain $K$.
\end{lemma}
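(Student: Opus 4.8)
The plan is to define the normal--normal trace $\boldsymbol  n^{\intercal}\boldsymbol\tau\boldsymbol  n$ of a general $\boldsymbol\tau\in\boldsymbol{H}(\div\boldsymbol{\div},K;\mathbb S)$ by duality, reading off the correct pairing from the Green's identity \eqref{eq:greenidentitydivdiv}, and then to obtain the claimed right inverse by a Riesz representation argument in $L^2(K;\mathbb S)$. Write $V:=H^2(K)\cap H_0^1(K)$. Two facts about $V$ on the polygon $K$ will be used repeatedly: first, $\|v\|_{2,K}\lesssim|v|_{2,K}$ for $v\in V$ (if $|v|_{2,K}=0$ then $v$ is affine, and $v|_{\partial K}=0$ forces $v\equiv0$; a compactness argument upgrades this to the inequality), so that $|\cdot|_{2,K}$ is a norm on $V$ equivalent to $\|\cdot\|_{2,K}$; and second, by the very definition of $H_{n,0}^{1/2}(\partial K)$, the map $v\mapsto\partial_n v|_{\partial K}$ is a bounded surjection of $V$ onto $H_{n,0}^{1/2}(\partial K)$ with kernel $H_0^2(K)$.

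For the trace estimate, given $\boldsymbol\tau\in\boldsymbol{H}(\div\boldsymbol{\div},K;\mathbb S)$ and $g\in H_{n,0}^{1/2}(\partial K)$, pick any $v\in V$ with $\partial_n v=g$ and set
\[
\langle\boldsymbol  n^{\intercal}\boldsymbol\tau\boldsymbol  n,g\rangle_{\partial K}:=(\boldsymbol\tau,\nabla^2 v)_K-(\div\boldsymbol{\div}\boldsymbol\tau,v)_K.
\]
This is independent of the lift $v$: the difference of two admissible lifts belongs to $H_0^2(K)$, on which $(\boldsymbol\tau,\nabla^2 w)_K=(\div\boldsymbol{\div}\boldsymbol\tau,w)_K$ by the distributional definition of $\div\boldsymbol{\div}\boldsymbol\tau\in L^2(K)$ together with the density of $C_0^\infty(K)$ in $H_0^2(K)$. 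Since $|(\boldsymbol\tau,\nabla^2 v)_K-(\div\boldsymbol{\div}\boldsymbol\tau,v)_K|\le\sqrt2\,\|\boldsymbol\tau\|_{\boldsymbol{H}(\div\boldsymbol{\div})}\|v\|_{2,K}$, taking the infimum over admissible $v$ yields $\|\boldsymbol  n^{\intercal}\boldsymbol\tau\boldsymbol  n\|_{H_n^{-1/2}(\partial K)}\lesssim\|\boldsymbol\tau\|_{\boldsymbol{H}(\div\boldsymbol{\div})}$. Finally, specializing \eqref{eq:greenidentitydivdiv} to $v\in V$ — where the vertex terms and the terms $(\partial_t(\boldsymbol  t^{\intercal}\boldsymbol\tau\boldsymbol  n)+\boldsymbol  n^{\intercal}\boldsymbol{\div}\boldsymbol\tau,v)_e$ all vanish because $v$ and $v|_e$ vanish — shows that for $\boldsymbol\tau\in\mathcal C^2(K;\mathbb S)$ this pairing coincides with $\int_{\partial K}(\boldsymbol  n^{\intercal}\boldsymbol\tau\boldsymbol  n)g$, so the duality definition genuinely extends the classical trace.

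For the converse, let $g\in H_n^{-1/2}(\partial K)=(H_{n,0}^{1/2}(\partial K))'$ and consider $F(v):=\langle g,\partial_n v\rangle$ on $V$. Then $|F(v)|\le\|g\|_{H_n^{-1/2}(\partial K)}\|\partial_n v\|_{H_{n,0}^{1/2}(\partial K)}\le\|g\|_{H_n^{-1/2}(\partial K)}\|v\|_{2,K}$, and $F$ vanishes on $H_0^2(K)$. The map $\nabla^2:V\to L^2(K;\mathbb S)$ is injective with $\|\nabla^2 v\|_{0,K}=|v|_{2,K}\simeq\|v\|_{2,K}$, hence its image $W:=\nabla^2 V$ is a closed subspace of $L^2(K;\mathbb S)$ and $F$ induces a bounded functional on $W$ of norm $\lesssim\|g\|_{H_n^{-1/2}(\partial K)}$. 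By the Riesz representation theorem there is $\boldsymbol\tau\in W\subset L^2(K;\mathbb S)$ with $(\boldsymbol\tau,\nabla^2 v)_K=F(v)$ for all $v\in V$ and $\|\boldsymbol\tau\|_{0,K}\lesssim\|g\|_{H_n^{-1/2}(\partial K)}$. Testing against $v\in H_0^2(K)$ gives $\div\boldsymbol{\div}\boldsymbol\tau=0$ in $L^2(K)$, so $\boldsymbol\tau\in\boldsymbol{H}(\div\boldsymbol{\div},K;\mathbb S)$ with $\|\boldsymbol\tau\|_{\boldsymbol{H}(\div\boldsymbol{\div})}=\|\boldsymbol\tau\|_{0,K}\lesssim\|g\|_{H_n^{-1/2}(\partial K)}$; and since $\langle\boldsymbol  n^{\intercal}\boldsymbol\tau\boldsymbol  n,\partial_n v\rangle_{\partial K}=F(v)=\langle g,\partial_n v\rangle$ while $\partial_n v$ exhausts $H_{n,0}^{1/2}(\partial K)$, we conclude $\boldsymbol  n^{\intercal}\boldsymbol\tau\boldsymbol  n|_{\partial K}=g$.

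The routine parts are the integration-by-parts bookkeeping and the constant tracking (all hidden constants depend only on the angles and diameter of $K$). The points that require care, and which I regard as the main obstacle, are the two structural facts about $V=H^2(K)\cap H_0^1(K)$ on the \emph{nonsmooth} domain $K$: the Poincaré--Friedrichs inequality $\|v\|_{2,K}\lesssim|v|_{2,K}$, and the exact identification of the lifting $v\mapsto\partial_n v|_{\partial K}$ with the space $H_{n,0}^{1/2}(\partial K)$ — equivalently, the compatibility conditions for $H^2$-traces on a polygon, cf.\ \cite[Theorem 1.5.2.8]{Grisvard:2011Elliptic}, which is also what forces the edgewise $H_{00}^{1/2}(e)$ description.
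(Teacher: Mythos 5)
The paper does not prove this lemma: it merely states it and refers the reader to \cite{Amara;Capatina-Papaghiuc;Chatti:2002Bending}, \cite{Fuhrer;Heuer;Niemi:2019ultraweak}, \cite{Sinwel2009}, and \cite{PechsteinSchoeberl2018}. So there is no in-paper proof to compare against, only your self-contained argument to check.

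Your argument is correct and is the standard one (and is, in spirit, what the cited references do). The forward direction correctly extracts from the Green's identity \eqref{eq:greenidentitydivdiv}, specialized to $v\in V=H^2(K)\cap H_0^1(K)$, the identity $\langle\boldsymbol n^{\intercal}\boldsymbol\tau\boldsymbol n,\partial_n v\rangle_{\partial K}=(\boldsymbol\tau,\nabla^2 v)_K-(\div\boldsymbol{\div}\boldsymbol\tau,v)_K$; the well-definedness hinges on the identification $\ker(\partial_n|_V)=H_0^2(K)$ for a polygon and on the density of $C_0^\infty(K)$ in $H_0^2(K)$, both of which are legitimate here, and the bound follows by taking the infimum over lifts (with absolute constant $\sqrt2$, independent of $K$). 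The converse via the Riesz representation theorem on the closed subspace $W=\nabla^2 V\subset L^2(K;\mathbb S)$ is also sound: injectivity and closedness of $\nabla^2$ follow from the Poincar\'e--Friedrichs inequality $\|v\|_{2,K}\lesssim|v|_{2,K}$ on $V$ (your Peetre--Tartar/compactness sketch is fine), and testing the resulting $\boldsymbol\tau$ against $C_0^\infty(K)$ gives $\div\boldsymbol{\div}\boldsymbol\tau=0$, so it lies in $\boldsymbol H(\div\boldsymbol{\div},K;\mathbb S)$ with $\|\boldsymbol\tau\|_{\boldsymbol H(\div\boldsymbol{\div})}=\|\boldsymbol\tau\|_{0,K}\lesssim\|g\|_{H_n^{-1/2}(\partial K)}$. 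You are also right to flag the two genuinely delicate inputs for a polygonal $K$: the compatibility conditions for $H^2$-traces (cf.\ \cite[Theorem 1.5.2.8]{Grisvard:2011Elliptic}, already invoked by the paper to justify the $H_{00}^{1/2}(e)$ description) and the Poincar\'e--Friedrichs inequality. Note only that the hidden constant in the converse depends on $K$ through the Poincar\'e constant, hence not just on the angles of $K$ but also on its diameter; this is consistent with what the lemma's phrase ``shape of the domain'' must mean.
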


We then consider another part of the trace involving combination of derivatives. Define trace space
\begin{align*}
H_{e,0}^{3/2}(\partial K)&:=\{v|_{\partial K}: v\in H^2(K), \partial_nv|_{\partial K}=0, v(\delta)=0 \textrm{ for each vertex } \delta\in\mathcal V(K)\}
\end{align*}
with norm
\[
\|g\|_{H_{e,0}^{3/2}(\partial K)}:=\inf_{v\in H^2(K)\atop \partial_n v=0, v=g}\|v\|_2.
\]
Let $H_e^{-3/2}(\partial K):=(H_{e,0}^{3/2}(\partial K))'$. Note that since we consider polygon domains, we explicitly impose the condition $v(\delta) = 0$ for each vertex of the polygon. 


\begin{lemma}
For any $\boldsymbol \tau\in\boldsymbol{H}(\div \boldsymbol{\div },K; \mathbb{S})$,  it holds
\begin{equation}\label{eq:divdivtracee0}
\|\partial_t(\boldsymbol  t^{\intercal}\boldsymbol \tau\boldsymbol  n)+\boldsymbol  n^{\intercal}\boldsymbol \div\boldsymbol \tau\|_{H_e^{-3/2}(\partial K)}\lesssim \|\boldsymbol{\tau}\|_{\boldsymbol{H}(\div \boldsymbol{\div })}.
\end{equation}
Conversely, for any $g\in H_e^{-3/2}(\partial K)$, there exists some $\boldsymbol \tau\in\boldsymbol{H}(\div \boldsymbol{\div },K; \mathbb{S})$ such that
\begin{equation}\label{eq:divdivinvtracee0}
\partial_t(\boldsymbol  t^{\intercal}\boldsymbol \tau\boldsymbol  n)+\boldsymbol  n^{\intercal}\boldsymbol \div\boldsymbol \tau=g, \quad
\|\boldsymbol{\tau}\|_{\boldsymbol{H}(\div \boldsymbol{\div })} \lesssim \|g\|_{H_e^{-3/2}(\partial K)}.
\end{equation}
The hidden constants depend only the shape of the domain $K$.
\end{lemma}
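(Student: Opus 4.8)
The plan is to obtain both statements from the Green's identity of Lemma~\ref{lm:Green}, applied only to test functions $v\in H^2(K)$ with $\partial_n v|_{\partial K}=0$ and $v(\delta)=0$ for every $\delta\in\mathcal V(K)$. For such $v$ the vertex contributions and the normal--normal term in \eqref{eq:greenidentitydivdiv} vanish, and the identity collapses to
\[
\sum_{e\in\mathcal E(K)}\bigl(\partial_t(\boldsymbol t^{\intercal}\boldsymbol\tau\boldsymbol n)+\boldsymbol n^{\intercal}\boldsymbol\div\boldsymbol\tau,\,v\bigr)_{e}=(\div\boldsymbol\div\boldsymbol\tau,v)_K-(\boldsymbol\tau,\nabla^2 v)_K .
\]
The right-hand side is meaningful for every $\boldsymbol\tau\in\boldsymbol H(\div\boldsymbol\div,K;\mathbb S)$ and depends only on $v|_{\partial K}$: two lifts of the same boundary datum, each with vanishing normal derivative on $\partial K$, differ by an element of $H_0^2(K)$, on which $(\div\boldsymbol\div\boldsymbol\tau,\cdot)_K=(\boldsymbol\tau,\nabla^2\cdot)_K$ by the definition of $\div\boldsymbol\div$ as a distribution together with the density of $C_0^\infty(K)$ in $H_0^2(K)$. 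Hence this formula \emph{defines} the functional $\partial_t(\boldsymbol t^{\intercal}\boldsymbol\tau\boldsymbol n)+\boldsymbol n^{\intercal}\boldsymbol\div\boldsymbol\tau\in H_e^{-3/2}(\partial K)$, consistently with Lemma~\ref{lm:Green} when $\boldsymbol\tau$ is smooth. The bound \eqref{eq:divdivtracee0} is then immediate: for any such lift $v$ of $g\in H_{e,0}^{3/2}(\partial K)$,
\[
\bigl| (\div\boldsymbol\div\boldsymbol\tau,v)_K-(\boldsymbol\tau,\nabla^2 v)_K \bigr|\le\bigl(\|\div\boldsymbol\div\boldsymbol\tau\|_0+\|\boldsymbol\tau\|_0\bigr)\|v\|_2\lesssim\|\boldsymbol\tau\|_{\boldsymbol H(\div\boldsymbol\div)}\|v\|_2 ,
\]
and taking the infimum over the admissible lifts $v$ gives the dual-norm estimate.

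For the converse I would realize $g$ as the trace of a Hessian. Set
\[
V:=\{v\in H^2(K):\ \partial_n v|_{\partial K}=0,\ v(\delta)=0\ \forall\,\delta\in\mathcal V(K)\},
\]
a closed subspace of $H^2(K)$ whose image under $v\mapsto v|_{\partial K}$ is exactly $H_{e,0}^{3/2}(\partial K)$. On $V$ the seminorm $|v|_2=\|\nabla^2 v\|_0$ is a norm equivalent to $\|\cdot\|_2$: any $v\in V$ with $|v|_2=0$ is affine, so $\nabla v$ is a constant vector orthogonal to the normals of at least two non-parallel edges of $K$ and therefore vanishes, whence $v$ is constant and then identically zero since $v(\delta)=0$; a standard Rellich-compactness contradiction argument upgrades this injectivity to a Poincar\'e inequality $\|v\|_2\lesssim|v|_2$ on $V$. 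Writing $\langle\cdot,\cdot\rangle_{\partial K}$ for the $H_e^{-3/2}(\partial K)$--$H_{e,0}^{3/2}(\partial K)$ duality pairing, the Riesz representation theorem on $(V,|\cdot|_2)$ provides a unique $u\in V$ with
\[
(\nabla^2 u,\nabla^2 v)_K=-\langle g,v\rangle_{\partial K}\qquad\forall\,v\in V ,
\]
the right-hand side being a bounded functional on $V$ because $\|v|_{\partial K}\|_{H_{e,0}^{3/2}(\partial K)}\le\|v\|_2\lesssim|v|_2$; testing with $v=u$ gives $|u|_2\lesssim\|g\|_{H_e^{-3/2}(\partial K)}$.

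Now put $\boldsymbol\tau:=\nabla^2 u\in\boldsymbol L^2(K;\mathbb S)$. Testing the variational identity against $v\in C_0^\infty(K)\subset V$, for which $\langle g,v\rangle_{\partial K}=0$, shows $\Delta^2 u=0$ in $\mathcal D'(K)$; hence $\div\boldsymbol\div\boldsymbol\tau=0\in L^2(K)$, so $\boldsymbol\tau\in\boldsymbol H(\div\boldsymbol\div,K;\mathbb S)$ with $\|\boldsymbol\tau\|_{\boldsymbol H(\div\boldsymbol\div)}=\|\nabla^2 u\|_0=|u|_2\lesssim\|g\|_{H_e^{-3/2}(\partial K)}$. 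Finally, evaluating the trace functional from the first paragraph against an arbitrary $\phi\in H_{e,0}^{3/2}(\partial K)$ with a lift $v\in V$ yields
\[
\langle\partial_t(\boldsymbol t^{\intercal}\boldsymbol\tau\boldsymbol n)+\boldsymbol n^{\intercal}\boldsymbol\div\boldsymbol\tau,\phi\rangle_{\partial K}=(\div\boldsymbol\div\boldsymbol\tau,v)_K-(\nabla^2 u,\nabla^2 v)_K=-(\nabla^2 u,\nabla^2 v)_K=\langle g,\phi\rangle_{\partial K},
\]
which is \eqref{eq:divdivinvtracee0}.

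I expect the main obstacle to be corner bookkeeping rather than analysis: one must make sure that $H_{e,0}^{3/2}(\partial K)$ is precisely $V|_{\partial K}$ and that the duality pairings are genuinely well posed on it — this is where the trace-compatibility conditions at the vertices of the polygon \cite[Theorem 1.5.2.8]{Grisvard:2011Elliptic} enter — and that the Poincar\'e inequality on $V$ holds for the given polygon. Once this functional-analytic frame is fixed, the analytic heart of the converse is the short observation that the natural biharmonic problem associated with this trace has data supported on $\partial K$, so its solution satisfies $\Delta^2 u=0$ and automatically produces a tensor in $\boldsymbol H(\div\boldsymbol\div,K;\mathbb S)$ of the right size.
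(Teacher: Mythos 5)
Your proof is sound, and it supplies something the paper itself does not: the paper states this lemma and refers the reader to \cite[Theorem 2.2]{Amara;Capatina-Papaghiuc;Chatti:2002Bending} and \cite[Lemma 3.2]{Fuhrer;Heuer;Niemi:2019ultraweak} for details rather than proving it. So there is no internal proof to compare against, but your argument is the standard one that those references use, and every step checks out.

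Two small points worth spelling out if you write this up in full. First, the identity $H_0^2(K)=\{w\in H^2(K):w|_{\partial K}=\partial_n w|_{\partial K}=0\}$ used to show the right-hand side of the Green's identity depends only on $v|_{\partial K}$ is a fact about Lipschitz domains that deserves a citation; once you have it, the density of $C_0^\infty(K)$ in $H_0^2(K)$ immediately gives $(\div\boldsymbol\div\boldsymbol\tau,w)_K=(\boldsymbol\tau,\nabla^2w)_K$ for the difference of two lifts, as you say. Second, the sign bookkeeping in the final display is right but fragile: from the reduced Green's identity the trace functional acts by $\langle\cdot,v\rangle=(\div\boldsymbol\div\boldsymbol\tau,v)_K-(\boldsymbol\tau,\nabla^2v)_K$; with $\boldsymbol\tau=\nabla^2u$ and $\Delta^2u=0$ this is $-(\nabla^2u,\nabla^2v)_K=\langle g,v\rangle$, which is why the minus sign must appear on the right-hand side of the Riesz problem $(\nabla^2u,\nabla^2v)_K=-\langle g,v\rangle_{\partial K}$. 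You got this right, but a careless reader could flip it. The Peetre--Tartar argument for $\|v\|_{2,K}\lesssim|v|_{2,K}$ on $V$ is correct (the kernel of $\nabla^2$ on $V$ is trivial precisely because a polygon has two non-parallel edges and you have pinned the vertex values), and the remaining estimates are routine. In short: correct, and filling a gap the paper delegates to the literature.
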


\subsection{Continuity across the boundary}
We then present a sufficient continuity condition for piecewise smoothing functions to be in $\boldsymbol{H}(\div \boldsymbol{\div },\Omega; \mathbb{S})$. Recall that $\mathcal {T}_h$ is a shape regular polygonal mesh of $\Omega$.

\begin{lemma}\label{lem:Hdivdivpatching}
Let $\boldsymbol \tau\in \boldsymbol  L^2(\Omega;\mathbb S)$ such that
\begin{enumerate}[(i)]
\item $\boldsymbol \tau|_K\in \boldsymbol{H}(\div \boldsymbol{\div },K; \mathbb{S})$ for each polygon $K\in\mathcal T_h$;

\smallskip
\item $(\boldsymbol  n^{\intercal}\boldsymbol \tau\boldsymbol  n)|_e\in L^2(e)$ is single-valued for each $e\in\mathcal E_h^i$;

\smallskip
\item $(\partial_{t_e}(\boldsymbol  t^{\intercal}\boldsymbol \tau\boldsymbol  n)+\boldsymbol  n_e^{\intercal}\boldsymbol \div\boldsymbol \tau)|_e\in L^2(e)$ is single-valued for each $e\in\mathcal E_h^i$;

\smallskip
\item $\boldsymbol \tau(\delta)$ is single-valued for each $\delta\in\mathcal V_h^i$,
\end{enumerate}
then $\boldsymbol \tau\in \boldsymbol{H}(\div \boldsymbol{\div },\Omega; \mathbb{S})$.
\end{lemma}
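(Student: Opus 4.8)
The plan is to verify directly that $\div\boldsymbol\div\boldsymbol\tau\in L^2(\Omega)$ in the distributional sense. Let $f\in L^2(\Omega)$ be the elementwise div-div, i.e.\ $f|_K:=\div\boldsymbol\div(\boldsymbol\tau|_K)$, which by (i) belongs to $L^2(K)$ for each $K$ and hence, there being finitely many elements, to $L^2(\Omega)$. It then suffices to prove that
\[
(\boldsymbol\tau,\nabla^2 v)_\Omega=(f,v)_\Omega\qquad\text{for every }v\in C_0^\infty(\Omega),
\]
because the left-hand side is by definition $\langle\div\boldsymbol\div\boldsymbol\tau,v\rangle$, so this identity says exactly $\div\boldsymbol\div\boldsymbol\tau=f\in L^2(\Omega)$, whence $\boldsymbol\tau\in\boldsymbol{H}(\div\boldsymbol\div,\Omega;\mathbb S)$.

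The first step is to apply the Green's identity \eqref{eq:greenidentitydivdiv} on each $K\in\mathcal T_h$ to the pair $(\boldsymbol\tau|_K,v|_K)$. Since $\boldsymbol\tau|_K$ only lies in $\boldsymbol{H}(\div\boldsymbol\div,K;\mathbb S)$ rather than $\mathcal C^2(K;\mathbb S)$, one first extends \eqref{eq:greenidentitydivdiv} to such $\boldsymbol\tau$ by density, reading the edge terms $(\boldsymbol n^\intercal\boldsymbol\tau\boldsymbol n,\partial_n v)_e$ and $(\partial_t(\boldsymbol t^\intercal\boldsymbol\tau\boldsymbol n)+\boldsymbol n^\intercal\boldsymbol\div\boldsymbol\tau,v)_e$ as the duality pairings supplied by the two preceding trace lemmas, where in fact (ii)--(iv) render them (and the vertex terms) genuine $L^2$ pairings and point evaluations; for the finite element spaces $\boldsymbol\tau|_K$ is a polynomial and \eqref{eq:greenidentitydivdiv} applies verbatim. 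Summing the element identities, the left-hand sides sum to $(f,v)_\Omega$ and the leading volume term to $(\boldsymbol\tau,\nabla^2 v)_\Omega$; I would then regroup the vertex and edge boundary contributions so that each interior edge $e$ is counted exactly from its two neighbouring elements $K_1,K_2$, on which $\boldsymbol n_{K_1}=-\boldsymbol n_{K_2}$ and, since $\boldsymbol t=(-n_2,n_1)^\intercal$, also $\boldsymbol t_{K_1}=-\boldsymbol t_{K_2}$ on $e$.

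The core of the argument is that all inter-element contributions cancel. The relevant observation is the behaviour under reversal of the edge orientation: $\boldsymbol n^\intercal\boldsymbol\tau\boldsymbol n$ and $\boldsymbol t^\intercal\boldsymbol\tau\boldsymbol n$ are invariant, whereas $\partial_t$, $\partial_n$ and $\boldsymbol n^\intercal\boldsymbol\div\boldsymbol\tau$ change sign; thus $\boldsymbol n^\intercal\boldsymbol\tau\boldsymbol n$ is orientation free while $\partial_t(\boldsymbol t^\intercal\boldsymbol\tau\boldsymbol n)+\boldsymbol n^\intercal\boldsymbol\div\boldsymbol\tau$ picks up a sign when the normal is flipped. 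Consequently, on an interior edge $e$: by continuity of $\nabla v$ (so $\partial_{n_{K_1}}v=-\partial_{n_{K_2}}v$) and (ii), the two copies of $(\boldsymbol n^\intercal\boldsymbol\tau\boldsymbol n,\partial_n v)_e$ cancel; by continuity of $v$ and (iii), the two copies of $(\partial_t(\boldsymbol t^\intercal\boldsymbol\tau\boldsymbol n)+\boldsymbol n^\intercal\boldsymbol\div\boldsymbol\tau,v)_e$ cancel. For the vertex sum, at an interior vertex $\delta$ each incident edge is shared by two elements on which $\sign_{e,\delta}$ has opposite values, so (iv) --- which makes $(\boldsymbol t_e^\intercal\boldsymbol\tau\boldsymbol n_e)(\delta)$ single-valued --- together with the single-valuedness of $v(\delta)$ makes these terms cancel in pairs. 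On boundary edges and at boundary vertices $v$ and $\nabla v$ vanish identically since $v\in C_0^\infty(\Omega)$, so nothing survives there. Collecting everything leaves precisely $(\boldsymbol\tau,\nabla^2 v)_\Omega=(f,v)_\Omega$.

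I expect the main obstacle to be the orientation and sign bookkeeping underlying these cancellations --- especially confirming that $\partial_t(\boldsymbol t^\intercal\boldsymbol\tau\boldsymbol n)+\boldsymbol n^\intercal\boldsymbol\div\boldsymbol\tau$ reverses sign exactly when the edge normal is reversed, so that (iii) is the correct single-valuedness hypothesis --- together with, in the general non-polynomial case, making the density extension of \eqref{eq:greenidentitydivdiv} and the meaning of the vertex terms rigorous for $\boldsymbol\tau|_K\in\boldsymbol{H}(\div\boldsymbol\div,K;\mathbb S)$. Beyond those points the proof is a routine assembly.
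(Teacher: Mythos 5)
Your proof is correct and follows the same approach as the paper: apply Green's identity \eqref{eq:greenidentitydivdiv} elementwise, sum over elements, and cancel the inter-element edge and vertex contributions using the opposite orientation of shared edges together with the single-valuedness hypotheses (ii)--(iv). Your explicit parity bookkeeping under orientation reversal, and the caveat about extending Green's identity from $\mathcal C^2$ to the $L^2$-trace setting, simply flesh out what the paper states tersely or leaves implicit.
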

\begin{proof}
For any $v\in C_0^{\infty}(\Omega)$, it follows from the Green's identity \eqref{eq:greenidentitydivdiv} that
\begin{align*}
(\boldsymbol \tau, \nabla^2v)&=\sum_{K\in\mathcal T_h}(\div\boldsymbol \div\boldsymbol \tau, v)_K+\sum_{K\in\mathcal T_h}\sum_{e\in\mathcal E^i(K)}\sum_{\delta\in\partial e\cap\Omega}\sign_{e,\delta}(\boldsymbol  t^{\intercal}\boldsymbol \tau\boldsymbol  n)(\delta)v(\delta)\\
&\quad+\sum_{K\in\mathcal T_h}\sum_{e\in\mathcal E^i(K)}\left[(\boldsymbol  n^{\intercal}\boldsymbol \tau\boldsymbol  n, \partial_n v)_{e}-(\partial_{t}(\boldsymbol  t^{\intercal}\boldsymbol \tau\boldsymbol  n)+\boldsymbol  n^{\intercal}\boldsymbol \div\boldsymbol \tau,  v)_{e}\right].
\end{align*}
As each interior edge is repeated twice in the summation with opposite orientation and the trace of $\tau$ and vertex value $\tau$ is single valued, we get
\[
\langle\div\boldsymbol \div\boldsymbol \tau, v\rangle=\sum_{K\in\mathcal T_h}(\div\boldsymbol \div\boldsymbol \tau, v)_K,
\]
which ends the proof.
\end{proof}
Besides the continuity of the trace, we also impose the continuity of stress at vertices which is a sufficient but not necessary condition for functions in $\boldsymbol{H}(\div \boldsymbol{\div },\Omega; \mathbb{S})$. For example, by the complex \eqref{eq:divdivcomplexL2-intro} and Lemma \ref{lm:tauv}, for $\boldsymbol  \tau = \sym \boldsymbol  \curl \, \boldsymbol  v$ with $\bs v$ being a Lagrange element function, $\bs \tau\in \boldsymbol{H}(\div \boldsymbol{\div },\Omega; \mathbb{S})$ but is not continuous at vertices. Physically, $\boldsymbol  t^{\intercal}\boldsymbol \tau\boldsymbol  n$ represents the torsional moment which may have jump at vertices; see \cite[\S 3.4]{FengShi1996} and \cite[\S 3.4]{HuangShiXu2005}. {Sufficient and necessary conditions are presented in \cite[Proposition 3.6]{Fuhrer;Heuer;Niemi:2019ultraweak}.}

The continuity of stress at vertices is crucial for us to construct $\boldsymbol{H}(\div \boldsymbol{\div }, \Omega; \mathbb{S})$ conforming element in the classical triple \cite{Ciarlet1978}, which resembles the $\boldsymbol{H}(\boldsymbol{\div }, \Omega; \mathbb{S})$ conforming Hu-Zhang element for linear elasticity \cite{Hu;Zhang:2015family}.

\section{Conforming finite element spaces and complex}
In this section we construct conforming finite element spaces for $H(\div {\boldsymbol  \div},\Omega; \mathbb S)$ on triangles. We first present two polynomial complexes and reveal some decompositions of polynomial tensor and vector spaces. Then we construct the finite element space and prove the unisolvence. We further link standard finite element spaces to construct finite element div-div complex. Finally we extend the construction to the strain complex. 

\subsection{Polynomial complexes}
In this subsection, we shall consider polynomial spaces on a simply connected domain $D$. Without loss of generality, we assume $(0,0) \in D$.
\begin{lemma}
The polynomial complex
\begin{equation}\label{eq:divdivcomplexPoly}
\boldsymbol{RT}\autorightarrow{$\subset$}{} \mathbb P_{k+1}(D;\mathbb R^2)\autorightarrow{$\sym\boldsymbol\curl$}{} \mathbb P_k(D;\mathbb S) \autorightarrow{$\div\boldsymbol{\div}$}{} \mathbb P_{k-2}(D)\autorightarrow{}{}0
\end{equation}
is exact.
\end{lemma}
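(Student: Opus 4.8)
The plan is to verify exactness of \eqref{eq:divdivcomplexPoly} at each of the three nontrivial slots by a dimension-counting argument combined with the easy inclusions. First I would check that the composition $\div\boldsymbol\div \circ \sym\boldsymbol\curl = 0$: this is immediate since $\div\boldsymbol\div\, \sym\boldsymbol\curl\,\boldsymbol v = \div\boldsymbol\div(\boldsymbol\curl\boldsymbol v + (\boldsymbol\curl\boldsymbol v)^{\intercal})/2$, and $\div\div$ annihilates $\boldsymbol\curl\boldsymbol v$ row-wise (a curl is divergence-free) and $\div\div$ of the transpose is $\rot\rot$ of $\boldsymbol v$'s curl, which also vanishes; alternatively one can invoke Lemma~\ref{lm:tauv} and the scalar identity $\div\boldsymbol\div\,\sym\boldsymbol\curl = 0$ that underlies \eqref{eq:divdivcomplexL2-intro}. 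I would also record $\boldsymbol{RT}\subset\ker(\sym\boldsymbol\curl)$, since for $\boldsymbol v = \boldsymbol a + b\,\boldsymbol x^{\perp}$ one computes $\boldsymbol\curl\boldsymbol v$ is a constant skew-symmetric matrix, hence $\sym\boldsymbol\curl\boldsymbol v = 0$.

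Next I would establish surjectivity of $\div\boldsymbol\div\colon \mathbb P_k(D;\mathbb S)\to\mathbb P_{k-2}(D)$. One clean way: given $q\in\mathbb P_{k-2}(D)$, solve the scalar biharmonic-type problem by taking any $w$ with $\Delta^2 w$ hitting $q$ — more elementarily, note $\div\boldsymbol\div(\phi\,\boldsymbol I) = \Delta\phi$ and $\Delta\colon\mathbb P_k\to\mathbb P_{k-2}$ is onto, so already the diagonal tensors $\phi\boldsymbol I$ with $\phi\in\mathbb P_k(D)$ surject onto $\mathbb P_{k-2}(D)$. Likewise $\sym\boldsymbol\curl\colon\mathbb P_{k+1}(D;\mathbb R^2)\to\mathbb P_k(D;\mathbb S)$ has kernel exactly $\boldsymbol{RT}$: the inclusion $\supseteq$ is above, and for $\subseteq$, if $\sym\boldsymbol\curl\boldsymbol v = 0$ then $\boldsymbol\curl\boldsymbol v$ is skew-symmetric, i.e. $\partial_{x_1}v_1 = \partial_{x_2}v_2 = 0$ at the off-diagonal level — working this out gives that $\boldsymbol v$ is an infinitesimal rigid motion, hence in $\boldsymbol{RT}$ (this is the standard rigid-displacement lemma restricted to polynomials).

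With these in hand, exactness follows by the rank–nullity count. Using $\dim\mathbb P_m(D) = \binom{m+2}{2}$, so $\dim\mathbb P_{k+1}(D;\mathbb R^2) = (k+2)(k+3)$, $\dim\mathbb P_k(D;\mathbb S) = 3\binom{k+2}{2} = \tfrac{3}{2}(k+1)(k+2)$, $\dim\mathbb P_{k-2}(D) = \tfrac12 k(k-1)$, and $\dim\boldsymbol{RT} = 3$, I would check the alternating sum
\[
3 - (k+2)(k+3) + \tfrac{3}{2}(k+1)(k+2) - \tfrac12 k(k-1) = 0,
\]
which holds identically in $k$. Since $\sym\boldsymbol\curl$ has kernel $\boldsymbol{RT}$ (giving exactness at $\mathbb P_{k+1}(D;\mathbb R^2)$ and the value of $\dim\img(\sym\boldsymbol\curl)$), and $\div\boldsymbol\div$ is surjective with the matching kernel dimension forced by the count, we get $\ker(\div\boldsymbol\div) = \img(\sym\boldsymbol\curl)$, i.e. exactness at $\mathbb P_k(D;\mathbb S)$. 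The main obstacle is the rigid-displacement-type claim $\ker(\sym\boldsymbol\curl) = \boldsymbol{RT}$ on polynomial spaces; everything else is either a one-line composition check or bookkeeping. If a slicker route is preferred, one can instead build the whole statement from the known smooth complex \eqref{eq:divdivcomplexL2-intro} by restricting to polynomials and using that $\sym\boldsymbol\curl$ and $\div\boldsymbol\div$ preserve polynomial degree in the indicated way, but the direct dimension count is self-contained and robust, so that is the route I would write up.
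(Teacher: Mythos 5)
Your overall strategy --- compose to zero, surjectivity of $\div\boldsymbol\div$, and close the gap by a dimension count --- is exactly the paper's proof, and your explicit alternating-sum check together with the $\div\boldsymbol\div(\phi\boldsymbol I)=\Delta\phi$ trick for surjectivity are welcome details that the paper leaves implicit. However, you have a rotation slip in the easy inclusion $\boldsymbol{RT}\subset\ker(\sym\boldsymbol\curl)$: the lowest-order Raviart--Thomas space is $\boldsymbol{RT}=\{\boldsymbol a+b\boldsymbol x:\boldsymbol a\in\mathbb R^2,\,b\in\mathbb R\}$, not $\{\boldsymbol a+b\boldsymbol x^{\perp}\}$. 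With the paper's convention $\boldsymbol x^{\perp}=(x_2,-x_1)^{\intercal}$ one computes $\boldsymbol\curl\,\boldsymbol x^{\perp}=\boldsymbol I$, which is \emph{symmetric} and nonzero, so $\sym\boldsymbol\curl\,\boldsymbol x^{\perp}\neq 0$; it is $\boldsymbol x$ (with $\boldsymbol\curl\,\boldsymbol x$ skew) that lies in $\ker(\sym\boldsymbol\curl)$, whereas $\boldsymbol x^{\perp}$ spans the rotational part of $\boldsymbol{RM}=\ker(\boldsymbol\defm)$. Relatedly, your parenthetical ``$\partial_{x_1}v_1=\partial_{x_2}v_2=0$ at the off-diagonal level'' is not the correct reading of $\sym\boldsymbol\curl\,\boldsymbol v=0$: the vanishing \emph{diagonal} of $\boldsymbol\curl\,\boldsymbol v$ gives $\partial_{x_2}v_1=0$ and $\partial_{x_1}v_2=0$, while the off-diagonal symmetry gives $\partial_{x_1}v_1=\partial_{x_2}v_2$, from which one deduces $\boldsymbol v=\boldsymbol a+c\,\boldsymbol x\in\boldsymbol{RT}$. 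These are local slips, not structural gaps --- once corrected the argument is sound and matches the paper's.
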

\begin{proof}
For any skew-symmetric $\boldsymbol \tau\in \mathcal C^2(D; \mathbb K)$, it can be written as $\boldsymbol  \tau =
\begin{pmatrix}
 0 & \phi\\
 -\phi & 0
\end{pmatrix}
$, then we have $\div\boldsymbol \div\boldsymbol \tau= \div\curl\phi = 0$. Hence
\[
\div\boldsymbol \div\sym\boldsymbol\curl \, \mathbb P_{k+1}(D;\mathbb R^2) = \div\boldsymbol \div\boldsymbol\curl \, \mathbb P_{k+1}(D;\mathbb R^2) = 0,
\]
\[
\div\boldsymbol{\div}\, \mathbb P_k(D;\mathbb S)=\div\boldsymbol{\div}\, \mathbb P_k(D;\mathbb M)=P_{k-2}(D).
\]
Furthermore by direct calculation
\[
\dim\mathbb P_k(D;\mathbb S)=\dim \sym\boldsymbol \curl \, \mathbb P_{k+1}(D;\mathbb R^2)+\dim \mathbb P_{k-2}(D),
\]
thus the complex \eqref{eq:divdivcomplexPoly} is exact.
\end{proof}

Define operator $\boldsymbol \pi_{RT}: \mathcal C^1(D; \mathbb R^2)\to \boldsymbol{RT}$ as
\[
\boldsymbol \pi_{RT}\boldsymbol  v:=\boldsymbol  v(0,0)+\frac{1}{2}(\div\boldsymbol  v)(0,0)\boldsymbol  x.
\]

The following complex is the generalization of the Koszul complex for vector functions. For linear elasticity, it can be constructed based on Poincar\'e operators found in~\cite{ChristiansenHuSande2020}. Here we give a straightforward proof. 

\begin{lemma}
The polynomial complex
\begin{equation}\label{eq:divdivKoszulcomplexPoly}
0\autorightarrow{$\subset$}{}\mathbb P_{k-2}(D) \autorightarrow{$\boldsymbol x\boldsymbol x^{\intercal}$}{} \mathbb P_k(D;\mathbb S) \autorightarrow{$\boldsymbol x^{\perp}$}{} \mathbb P_{k+1}(D;\mathbb R^2)\autorightarrow{$\boldsymbol \pi_{RT}$}{}\boldsymbol{RT}\autorightarrow{}{}0
\end{equation}
is exact.
\end{lemma}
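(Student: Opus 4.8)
The plan is to prove exactness of \eqref{eq:divdivKoszulcomplexPoly} slot by slot, handling the two endpoints and the middle symmetric–tensor slot directly and the remaining vector slot by a dimension count, mirroring the proof of \eqref{eq:divdivcomplexPoly}. First I would check that \eqref{eq:divdivKoszulcomplexPoly} is a complex. Since $\boldsymbol x^{\intercal}\boldsymbol x^{\perp}=0$, we get $(q\,\boldsymbol x\boldsymbol x^{\intercal})\boldsymbol x^{\perp}=q\,\boldsymbol x(\boldsymbol x^{\intercal}\boldsymbol x^{\perp})=\boldsymbol 0$ for all $q\in\mathbb P_{k-2}(D)$. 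For $\boldsymbol\tau\in\mathbb P_k(D;\mathbb S)$, the field $\boldsymbol\tau\boldsymbol x^{\perp}$ vanishes at the origin, and a one-line computation using $\boldsymbol\tau=\boldsymbol\tau^{\intercal}$ gives $\div(\boldsymbol\tau\boldsymbol x^{\perp})=\pm\,\boldsymbol x^{\perp}\!\cdot\div\boldsymbol\tau$, which also vanishes at the origin; hence $\boldsymbol\pi_{RT}(\boldsymbol\tau\boldsymbol x^{\perp})=(\boldsymbol\tau\boldsymbol x^{\perp})(0,0)+\tfrac12\big(\div(\boldsymbol\tau\boldsymbol x^{\perp})\big)(0,0)\,\boldsymbol x=\boldsymbol 0$.

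Next come the two easy exactness statements. The map $q\mapsto q\,\boldsymbol x\boldsymbol x^{\intercal}$ is injective because its $(1,1)$-entry is $x_1^{2}q$, which already forces $q=0$. And $\boldsymbol\pi_{RT}$ is surjective onto $\boldsymbol{RT}$ because it restricts to the identity there: for $\boldsymbol v=\boldsymbol c+\beta\boldsymbol x\in\boldsymbol{RT}$ one has $\boldsymbol v(0,0)=\boldsymbol c$ and $\div\boldsymbol v\equiv 2\beta$, so $\boldsymbol\pi_{RT}\boldsymbol v=\boldsymbol c+\beta\boldsymbol x=\boldsymbol v$.

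The substantive step is exactness at $\mathbb P_k(D;\mathbb S)$, i.e.\ $\ker(\cdot\,\boldsymbol x^{\perp})=\boldsymbol x\boldsymbol x^{\intercal}\mathbb P_{k-2}(D)$. Writing $\boldsymbol\tau=\begin{pmatrix}a&b\\ b&c\end{pmatrix}$ and $\boldsymbol x^{\perp}=(-x_2,x_1)^{\intercal}$, the equation $\boldsymbol\tau\boldsymbol x^{\perp}=\boldsymbol 0$ becomes $ax_2=bx_1$ and $bx_2=cx_1$. Since $D$ has nonempty interior, $\mathbb P_m(D)$ may be identified with polynomials in $\mathbb R[x_1,x_2]$, in which $x_1$ and $x_2$ are coprime; thus $ax_2=bx_1$ forces $x_1\mid a$, say $a=x_1\tilde a$, whence $b=\tilde a x_2$, and then $cx_1=\tilde a x_2^{2}$ forces $x_1\mid\tilde a$, say $\tilde a=x_1 q$, so that $a=x_1^{2}q$, $b=x_1x_2q$, $c=x_2^{2}q$ with $q\in\mathbb P_{k-2}(D)$ by degree count; that is, $\boldsymbol\tau=q\,\boldsymbol x\boldsymbol x^{\intercal}$. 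Finally, exactness at $\mathbb P_{k+1}(D;\mathbb R^2)$ follows by counting dimensions: from the steps above $\dim\big(\boldsymbol x^{\perp}\mathbb P_k(D;\mathbb S)\big)=\dim\mathbb P_k(D;\mathbb S)-\dim\mathbb P_{k-2}(D)$ and $\dim\ker\boldsymbol\pi_{RT}=\dim\mathbb P_{k+1}(D;\mathbb R^2)-3$, and an elementary evaluation shows these two numbers agree; since $\boldsymbol x^{\perp}\mathbb P_k(D;\mathbb S)\subseteq\ker\boldsymbol\pi_{RT}$ by the complex property, they must coincide. The only step that needs genuine care is the unique-factorization argument identifying $\ker(\cdot\,\boldsymbol x^{\perp})$ while respecting the symmetry of $\boldsymbol\tau$; everything else is routine bookkeeping, and the dimension identity is the sign-reversed counterpart of the one used for \eqref{eq:divdivcomplexPoly}.
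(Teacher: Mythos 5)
Your proof is correct, and it diverges from the paper's argument in a meaningful way at exactly one slot. For exactness at $\mathbb P_k(D;\mathbb S)$ you and the paper do essentially the same thing in different clothing: the paper observes that $\boldsymbol\tau\boldsymbol x^{\perp}=\boldsymbol 0$ forces each row to be a multiple of $\boldsymbol x$, writes $\boldsymbol\tau=\boldsymbol v\boldsymbol x^{\intercal}$, and then uses the symmetry of $\boldsymbol\tau$ to conclude $\boldsymbol v=q\boldsymbol x$; your entrywise unique-factorization argument with $a=x_1^2 q$, $b=x_1x_2 q$, $c=x_2^2 q$ is a coordinate version of the same two-stage factorization, perhaps a bit longer but completely elementary. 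The genuine divergence is at $\mathbb P_{k+1}(D;\mathbb R^2)$: the paper explicitly constructs a preimage, splitting an arbitrary $\boldsymbol v\in\ker\boldsymbol\pi_{RT}$ as $\boldsymbol\tau_1\boldsymbol x^{\perp}+q\boldsymbol x$ (symmetric plus skew part of the matrix coefficient), showing $q(0,0)=0$, writing $q=\boldsymbol q_1^{\intercal}\boldsymbol x^{\perp}$, and exhibiting $\boldsymbol\tau=\boldsymbol\tau_1+2\sym(\boldsymbol x\boldsymbol q_1^{\intercal})$; you instead close the exactness by a dimension count, using the rank-nullity information already extracted from the two easy slots together with the inclusion $\mathbb P_k(D;\mathbb S)\boldsymbol x^{\perp}\subseteq\ker\boldsymbol\pi_{RT}$. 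Both are valid; the paper's construction is more informative (it gives an explicit right inverse, useful if one later needs bounded Poincar\'e-type operators), while your dimension count is shorter and parallels the dimension argument the paper itself uses for \eqref{eq:divdivcomplexPoly}. One small thing to tighten: when verifying the complex property you assert $\div(\boldsymbol\tau\boldsymbol x^{\perp})=\pm\,\boldsymbol x^{\perp}\cdot\boldsymbol\div\boldsymbol\tau$; the ``extra'' term $\boldsymbol\tau:\nabla\boldsymbol x^{\perp}$ vanishes precisely because $\nabla\boldsymbol x^{\perp}$ is skew and $\boldsymbol\tau$ is symmetric, and it is worth saying so explicitly since the symmetry is what makes the whole construction work.
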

\begin{proof}
Since $(\boldsymbol x\boldsymbol x^{\intercal})\boldsymbol x^{\perp}=\boldsymbol  0$ and $\boldsymbol \pi_{RT}(\boldsymbol \tau\boldsymbol x^{\perp})=\boldsymbol 0$ for any $\boldsymbol \tau\in\mathbb P_k(D;\mathbb S)$, thus \eqref{eq:divdivKoszulcomplexPoly} is a complex.
For any $\boldsymbol \tau\in\mathbb P_k(D;\mathbb S)$ satisfying $\boldsymbol \tau\boldsymbol x^{\perp}=\boldsymbol  0$, there exists $\boldsymbol  v\in \mathbb P_{k-1}(D;\mathbb R^2)$ such that $\boldsymbol \tau=\boldsymbol  v\boldsymbol  x^{\intercal}$. By the symmetry of $\boldsymbol \tau$,
\[
\boldsymbol  x(\boldsymbol  v^{\intercal}\boldsymbol x^{\perp})=(\boldsymbol  x\boldsymbol  v^{\intercal})\boldsymbol x^{\perp}=(\boldsymbol  v\boldsymbol  x^{\intercal})^{\intercal}\boldsymbol x^{\perp}=\boldsymbol  v\boldsymbol  x^{\intercal}\boldsymbol x^{\perp}=\boldsymbol 0,
\]
which indicates $\boldsymbol  v^{\intercal}\boldsymbol x^{\perp}=0$. Thus there exists $q\in\mathbb P_{k-2}(D)$ satisfying $\boldsymbol  v=q\boldsymbol  x$.
Hence $\boldsymbol \tau=q\boldsymbol  x\boldsymbol  x^{\intercal}$.

Next we show $\mathbb P_{k+1}(D;\mathbb R^2)\cap\ker(\boldsymbol \pi_{RT})=\mathbb P_k(D;\mathbb S)\boldsymbol x^{\perp}$.
For any $\boldsymbol  v\in\mathbb P_{k+1}(D;\mathbb R^2)\cap\ker(\boldsymbol \pi_{RT})$, since $\boldsymbol  v(0,0)=\boldsymbol 0$, there exist $\boldsymbol \tau_1\in\mathbb P_k(D;\mathbb S)$ and $q\in\mathbb P_k(D)$ such that
\[
\boldsymbol  v=\boldsymbol \tau_1\boldsymbol x^{\perp}+\begin{pmatrix}
0 & -q \\
q & 0
\end{pmatrix}\boldsymbol x^{\perp}=\boldsymbol \tau_1\boldsymbol x^{\perp}+q\boldsymbol  x.
\]
Noting that $\boldsymbol \pi_{RT}(\boldsymbol \tau_1\boldsymbol x^{\perp})=\boldsymbol 0$, we also have $\boldsymbol \pi_{RT}(q\boldsymbol  x)=\boldsymbol 0$. This means
\[
(\div(q\boldsymbol  x))(0,0)=0,\quad \textrm{ i.e. } \quad q(0,0)=0.
\]
Thus there exists $\boldsymbol  q_1\in\mathbb P_{k-1}(D;\mathbb R^2)$ such that $q=\boldsymbol  q_1^{\intercal}\boldsymbol x^{\perp}$.
Now take $\boldsymbol \tau=\boldsymbol \tau_1+2\sym(\boldsymbol  x\boldsymbol  q_1^{\intercal})\in\mathbb P_k(D;\mathbb S)$, then
\[
\boldsymbol \tau\boldsymbol x^{\perp}=\boldsymbol \tau_1\boldsymbol x^{\perp}+(\boldsymbol  x\boldsymbol  q_1^{\intercal}+\boldsymbol  q_1\boldsymbol  x^{\intercal})\boldsymbol x^{\perp}=\boldsymbol \tau_1\boldsymbol x^{\perp}+\boldsymbol  x q=\boldsymbol  v.
\]
Hence $\mathbb P_{k+1}(D;\mathbb R^2)\cap\ker(\boldsymbol \pi_{RT})=\mathbb P_k(D;\mathbb S)\boldsymbol x^{\perp}$ holds.

Apparently the operator $\boldsymbol \pi_{RT}: \mathbb P_{k+1}(D;\mathbb R^2)\to\boldsymbol {RT}$ is surjective as
$$
\boldsymbol \pi_{RT}\boldsymbol  v=\boldsymbol  v\quad \forall~\boldsymbol  v\in\boldsymbol{RT}.
$$
\end{proof}

Those two complexes \eqref{eq:divdivcomplexPoly} and \eqref{eq:divdivKoszulcomplexPoly} are connected as
\begin{equation}\label{eq:divdivcomplexPolydouble}
\xymatrix{
\boldsymbol{RT}\ar@<0.4ex>[r]^-{\subset} & \; \mathbb P_{k+1}(D;\mathbb R^2)\; \ar@<0.4ex>[r]^-{\sym\boldsymbol\curl}\ar@<0.4ex>[l]^-{\boldsymbol x}  & \; \mathbb P_k(D;\mathbb S) \ar@<0.4ex>[r]^-{\div\boldsymbol{\div}}\; \ar@<0.4ex>[l]^-{\boldsymbol x^{\bot}} & \; \mathbb P_{k-2}(D)  \; \ar@<0.4ex>[r]^-{} \ar@<0.4ex>[l]^-{\boldsymbol x\boldsymbol x^{\intercal}}
& 0 \ar@<0.4ex>[l]^-{\supset} }.
\end{equation}
Unlike the Koszul complex for vectors functions, we do not have the identity property applied to homogenous polynomials. Fortunately decomposition of polynomial spaces using Koszul and differential operators still holds.

First of all, we have the decomposition
\[
\mathbb P_{k+1}(D;\mathbb R^2) = \mathbb P_k(D;\mathbb S)\boldsymbol x^{\perp}\oplus\boldsymbol{RT}.
\]

%
Let
\[
\mathbb C_k(D; \mathbb S):=\sym\boldsymbol \curl \, \mathbb P_{k+1}(D;\mathbb R^2),\quad \mathbb C_k^{\oplus}(D; \mathbb S):=\boldsymbol  x\boldsymbol  x^{\intercal}\mathbb P_{k-2}(D).
\]
The dimensions are
\[
\dim\mathbb C_k(D; \mathbb S)=k^2+5k+3,\quad \dim\mathbb C_k^{\oplus}(D; \mathbb S)=\frac{1}{2}k(k-1).
\]
The following decomposition for the polynomial symmetric tensor is indispensable for our construction of div-div conforming finite elements.  
\begin{lemma}\label{lem:symmpolyspacedirectsum}
It holds
\[
\mathbb P_{k}(D;\mathbb S)=\mathbb C_k(D;\mathbb S)\oplus \mathbb C_k^{\oplus}(D;\mathbb S).
\]
And $\div\boldsymbol \div: \mathbb C_k^{\oplus}(D;\mathbb S)\to\mathbb P_{k-2}(D;\mathbb R^2)$ is a bijection.
\end{lemma}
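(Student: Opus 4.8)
The plan is to prove the two claims in turn, using dimension count plus exactness. First I would establish that $\mathbb C_k(D;\mathbb S)$ and $\mathbb C_k^{\oplus}(D;\mathbb S)$ intersect trivially. If $\boldsymbol\tau = \sym\boldsymbol\curl\,\boldsymbol v = \boldsymbol x\boldsymbol x^{\intercal}q$ for some $\boldsymbol v\in\mathbb P_{k+1}(D;\mathbb R^2)$ and $q\in\mathbb P_{k-2}(D)$, then applying $\div\boldsymbol\div$ and using $\div\boldsymbol\div\sym\boldsymbol\curl = 0$ from the complex \eqref{eq:divdivcomplexPoly} gives $\div\boldsymbol\div(\boldsymbol x\boldsymbol x^{\intercal}q) = 0$. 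I would then show $\div\boldsymbol\div(\boldsymbol x\boldsymbol x^{\intercal}q)$ cannot vanish unless $q = 0$: a direct computation gives $\div\boldsymbol\div(\boldsymbol x\boldsymbol x^{\intercal}q)$ as $q$ plus terms involving $\boldsymbol x\cdot\nabla q$ and second derivatives of $q$, and on homogeneous components of degree $j$ the operator acts as multiplication by a strictly positive constant (of the form $(j+1)(j+2)$ or similar), hence is injective on $\mathbb P_{k-2}(D)$. This simultaneously proves the second assertion that $\div\boldsymbol\div:\mathbb C_k^{\oplus}(D;\mathbb S)\to\mathbb P_{k-2}(D)$ is a bijection, since the map is injective and both spaces have dimension $\tfrac12 k(k-1) = \dim\mathbb P_{k-2}(D)$.

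Given the trivial intersection, the direct sum decomposition follows from a dimension count: $\dim\mathbb C_k(D;\mathbb S) + \dim\mathbb C_k^{\oplus}(D;\mathbb S) = (k^2+5k+3) + \tfrac12 k(k-1) = \tfrac12(k+1)(k+2) + k(k+2) = \dim\mathbb P_k(D;\mathbb S)$ (using that $\dim\mathbb P_k(D;\mathbb S) = \tfrac32(k+1)(k+2)$ is three times the scalar dimension), so $\mathbb C_k(D;\mathbb S)\oplus\mathbb C_k^{\oplus}(D;\mathbb S)$ already exhausts $\mathbb P_k(D;\mathbb S)$. Here I would lean on the dimension formula $\dim\mathbb C_k(D;\mathbb S) = \dim\mathbb P_{k+1}(D;\mathbb R^2) - \dim\boldsymbol{RT}$, which is exactly the exactness of the left portion of \eqref{eq:divdivcomplexPoly} at $\mathbb P_{k+1}(D;\mathbb R^2)$ (the kernel of $\sym\boldsymbol\curl$ on $\mathbb P_{k+1}$ is $\boldsymbol{RT}$); this is where the previously established polynomial complexes pay off.

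Alternatively, and perhaps more cleanly, I would phrase the whole argument through the surjectivity of $\div\boldsymbol\div$ on $\mathbb P_k(D;\mathbb S)$ together with the kernel description: from \eqref{eq:divdivcomplexPoly}, $\ker(\div\boldsymbol\div|_{\mathbb P_k(D;\mathbb S)}) = \mathbb C_k(D;\mathbb S)$. Since $\div\boldsymbol\div$ restricted to $\mathbb C_k^{\oplus}(D;\mathbb S)$ is injective (the computation above) into $\mathbb P_{k-2}(D)$ and these have equal dimension, it is an isomorphism onto $\mathbb P_{k-2}(D)$; hence $\mathbb C_k^{\oplus}(D;\mathbb S)$ is a complement to the kernel $\mathbb C_k(D;\mathbb S)$, which is precisely the asserted direct sum.

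The main obstacle is the explicit verification that $\div\boldsymbol\div(\boldsymbol x\boldsymbol x^{\intercal}q)$ is a nonzero multiple of $q$ on each homogeneous degree — one must carefully expand $\boldsymbol x\boldsymbol x^{\intercal}q$, apply $\boldsymbol\div$ twice, and track the Euler-operator terms $\boldsymbol x\cdot\nabla q$ to see the coefficient is $(\deg + 1)(\deg + 2)$ or the like and in particular never zero. Everything else is bookkeeping with the dimension formulas already supplied in the excerpt.
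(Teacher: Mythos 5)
Your proof is correct and follows essentially the same route as the paper: a dimension count for the direct sum together with the observation that $\div\boldsymbol\div(\boldsymbol x\boldsymbol x^{\intercal}q)$ acts as multiplication by the positive constant $(j+2)(j+3)$ on homogeneous $q$ of degree $j$, which simultaneously gives trivial intersection and the bijection. The paper happens to organize this slightly differently (a separate $\div\bigl((x_1+x_2)\boldsymbol x q\bigr)$ manipulation for trivial intersection, then the Euler formula only for the bijection), but the content is the same; note the intermediate expression $\tfrac12(k+1)(k+2)+k(k+2)$ in your dimension check is an arithmetic slip, though the endpoints $(k^2+5k+3)+\tfrac12 k(k-1)=\tfrac32(k+1)(k+2)$ do agree.
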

\begin{proof}
Assume $q\in\mathbb P_{k-2}(D)$ satisfies $\boldsymbol x\boldsymbol  x^{\intercal}q\in\mathbb C_k(D;\mathbb S)$, which means
\[
\div\boldsymbol{\div}(\boldsymbol  x\boldsymbol  x^{\intercal}q)=0.
\]
Since
$
\boldsymbol{\div}(\boldsymbol  x\boldsymbol  x^{\intercal}q)=(\div(\boldsymbol  x q)+q)\boldsymbol  x
$,  we get
\[
\div(\boldsymbol  x q)+q=0.
\]
Then
\[
\div((x_1+x_2)\boldsymbol  x q)=(x_1+x_2)(\div(\boldsymbol  x q)+q)=0,
\]
which indicates $q=0$.
Hence $\mathbb C_k(D;\mathbb S)\cap\mathbb C_k^{\oplus}(D;\mathbb S)=\boldsymbol 0$.
Therefore we obtain the decomposition by the fact $\dim\mathbb P_{k}(D;\mathbb S)=\dim\mathbb C_k(D;\mathbb S)+\dim\mathbb C_k^{\oplus}(D;\mathbb S)$.

To prove the second result, we shall show a stronger result 
\begin{equation}\label{eq:divdivk}
\div\boldsymbol \div (\bs x\bs x^{\intercal} q) = (k+3)(k+2) q, \quad q\in \mathbb H_k(D).
\end{equation}
By Euler's formula for homogenous polynomial, we obtain $\div(\boldsymbol  x q)=(\boldsymbol x\cdot\nabla)q+2q = (k+2)q$. Then $
\boldsymbol{\div}(\boldsymbol  x\boldsymbol  x^{\intercal}q)=(\div(\boldsymbol  x q)+q)\boldsymbol  x = (k+3)q \boldsymbol x.
$
Computing $\div$ again and using $\div(\boldsymbol  x q)=(k+2)q$, we obtain \eqref{eq:divdivk}. 
\end{proof}

\begin{remark}\label{rm:Ek}
For a vector $\bs x = (x_1, x_2)$, introduce the rotation $\boldsymbol x^{\perp}= (x_2, -x_1)$. For the linear elasticity, we have the decomposition
\[
\mathbb P_{k}(D;\mathbb S)=\mathbb E_k(D;\mathbb S)\oplus \mathbb E_k^{\oplus}(D;\mathbb S),
\]
where, with $\bs \defm$ being the symmetric gradient operator,
\[
\mathbb E_k(D;\mathbb S):=\boldsymbol \defm\,\mathbb P_{k+1}(D;\mathbb R^2),\quad \mathbb E_k^{\oplus}(D;\mathbb S):=\boldsymbol  x^{\perp}(\boldsymbol  x^{\perp})^{\intercal}\mathbb P_{k-2}(D).
\]
\end{remark}

It is easy to see that
The polynomial complex
\begin{equation}\label{eq:hesscomplexPoly}
\mathbb P_{1}(D)\autorightarrow{$\subset$}{} \mathbb P_{k+1}(D)\autorightarrow{$\nabla^2$}{} \mathbb P_{k-1}(D;\mathbb S) \autorightarrow{$\boldsymbol{\rot}$}{} \mathbb P_{k-2}(D;\mathbb R^2)\autorightarrow{}{}0
\end{equation}
is exact, which the dual complex of the polynomial complex \eqref{eq:divdivcomplexPoly}.

Define operator $\pi_{1}: \mathcal C^1(D)\to \mathbb P_{1}(D)$ as
\[
\pi_{1}v:=v(0,0)+\boldsymbol  x^{\intercal}(\nabla v)(0,0).
\]

\begin{lemma}
The polynomial complex
\begin{equation}\label{eq:hessKoszulcomplexPoly}
\resizebox{.9\hsize}{!}{$
\boldsymbol{0}\autorightarrow{$\subset$}{} \mathbb P_{k-2}(D;\mathbb R^2)\autorightarrow{$\sym(\boldsymbol x^{\perp}\otimes\bs v)$}{} \mathbb P_{k-1}(D;\mathbb S) \autorightarrow{$\boldsymbol x^{\intercal}\boldsymbol \tau\boldsymbol x$}{} \mathbb P_{k+1}(D)\autorightarrow{$\pi_1$}{}\mathbb P_{1}(D)\autorightarrow{}{}0
$}
\end{equation}
is exact.
\end{lemma}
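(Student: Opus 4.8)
The plan is to show directly that \eqref{eq:hessKoszulcomplexPoly} is a cochain complex and then to check exactness at each of its four positions. Verifying that it is a complex is immediate: writing $\sym(\boldsymbol x^{\perp}\otimes\boldsymbol v)=\tfrac12(\boldsymbol x^{\perp}\boldsymbol v^{\intercal}+\boldsymbol v(\boldsymbol x^{\perp})^{\intercal})$ and using $\boldsymbol x^{\intercal}\boldsymbol x^{\perp}=0$ gives $\boldsymbol x^{\intercal}\sym(\boldsymbol x^{\perp}\otimes\boldsymbol v)\boldsymbol x=0$; and for $\boldsymbol\tau\in\mathbb P_{k-1}(D;\mathbb S)$ the polynomial $\boldsymbol x^{\intercal}\boldsymbol\tau\boldsymbol x=x_1^2\tau_{11}+2x_1x_2\tau_{12}+x_2^2\tau_{22}$ is a sum of monomials of total degree at least two, hence vanishes to second order at the origin, so $\pi_1(\boldsymbol x^{\intercal}\boldsymbol\tau\boldsymbol x)=0$.

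Exactness at the two ends is also routine. The map $\pi_1$ is onto because $\pi_1 v=v$ for $v\in\mathbb P_1(D)$. For injectivity of $\boldsymbol v\mapsto\sym(\boldsymbol x^{\perp}\otimes\boldsymbol v)$ I would simply read off the diagonal entries: with $\boldsymbol x^{\perp}=(x_2,-x_1)^{\intercal}$, the $(1,1)$- and $(2,2)$-entries of $\sym(\boldsymbol x^{\perp}\otimes\boldsymbol v)$ are $x_2 v_1$ and $x_1 v_2$, so $\sym(\boldsymbol x^{\perp}\otimes\boldsymbol v)=\boldsymbol 0$ forces $v_1=v_2=0$.

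The one place where a genuine computation enters is exactness at $\mathbb P_{k-1}(D;\mathbb S)$. Given $\boldsymbol\tau\in\mathbb P_{k-1}(D;\mathbb S)$ with $x_1^2\tau_{11}+2x_1x_2\tau_{12}+x_2^2\tau_{22}=0$, I would rewrite this as $x_1^2\tau_{11}=-x_2(2x_1\tau_{12}+x_2\tau_{22})$; since $x_1$ and $x_2$ are coprime in the unique factorization domain $\mathbb R[x_1,x_2]$, this forces $x_2\mid\tau_{11}$, and symmetrically $x_1\mid\tau_{22}$. Writing $\tau_{11}=x_2 g$ and $\tau_{22}=x_1 h$ with $g,h\in\mathbb P_{k-2}(D)$ and cancelling the common factor $x_1x_2$ from the identity leaves $\tau_{12}=-\tfrac12(x_1 g+x_2 h)$; one then checks entry by entry that $\boldsymbol v:=(g,-h)^{\intercal}\in\mathbb P_{k-2}(D;\mathbb R^2)$ satisfies $\sym(\boldsymbol x^{\perp}\otimes\boldsymbol v)=\boldsymbol\tau$. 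Together with the complex property this gives $\ker(\boldsymbol\tau\mapsto\boldsymbol x^{\intercal}\boldsymbol\tau\boldsymbol x)=\img(\sym(\boldsymbol x^{\perp}\otimes\,\cdot\,))$.

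Finally, for exactness at $\mathbb P_{k+1}(D)$ I would avoid a second ad hoc argument and instead count dimensions: the complex \eqref{eq:hessKoszulcomplexPoly} involves exactly the same four nonzero spaces as the already-exact complex \eqref{eq:hesscomplexPoly}, so the alternating sum of their dimensions vanishes, and exactness at the other three positions then forces $\ker\pi_1=\img(\boldsymbol\tau\mapsto\boldsymbol x^{\intercal}\boldsymbol\tau\boldsymbol x)$. If a self-contained argument is preferred, one can instead note that any $v\in\mathbb P_{k+1}(D)$ with $v(0,0)=0$ and $\nabla v(0,0)=\boldsymbol 0$ is a sum of monomials $x_1^ax_2^b$ with $a+b\ge2$, and distributing the factor $x_1^2$, $x_1x_2$, or $x_2^2$ of each such monomial into $\tau_{11}$, $\tau_{12}$, or $\tau_{22}$ produces $\boldsymbol\tau\in\mathbb P_{k-1}(D;\mathbb S)$ with $\boldsymbol x^{\intercal}\boldsymbol\tau\boldsymbol x=v$. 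The only real obstacle is that, in contrast with the classical Koszul complex for vector fields, there is no homotopy identity relating the Koszul-type maps above to the differential operators of \eqref{eq:hesscomplexPoly}, so exactness must be established position by position rather than by a single homotopy; the divisibility step for symmetric tensors is where the work is concentrated.
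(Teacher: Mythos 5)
Your proof is correct, and it distributes the work differently from the paper. The paper establishes exactness at the $\mathbb P_{k+1}(D)$ slot directly — asserting $\boldsymbol x^{\intercal}\mathbb P_{k-1}(D;\mathbb S)\boldsymbol x=\mathbb P_{k+1}(D)\cap\ker\pi_1$ as easy to check — observes that $\dim\sym(\boldsymbol x^{\perp}\otimes\mathbb P_{k-2}(D;\mathbb R^2))=\dim\mathbb P_{k-2}(D;\mathbb R^2)$ (injectivity of the first map), and then deduces exactness at the middle slot from the dimension count. You invert this: you prove exactness at $\mathbb P_{k-1}(D;\mathbb S)$ explicitly via the coprimality of $x_1$ and $x_2$ in the UFD $\mathbb R[x_1,x_2]$, constructing the preimage $\boldsymbol v=(g,-h)^{\intercal}$, and then close the $\mathbb P_{k+1}(D)$ slot by the same dimension count (or, as you note, by the monomial-distribution argument, which is precisely the unwritten ``easy check'' behind the paper's claim). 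Both routes are sound because the alternating dimension sum vanishes. Your divisibility step is genuinely different in character and is closer in spirit to the factoring argument the paper uses for the other Koszul complex \eqref{eq:divdivKoszulcomplexPoly}; it yields a fully self-contained proof, replacing the paper's ``easy to check'' by an explicit construction. One small sign slip that does not affect anything: with $\boldsymbol x^{\perp}=(x_2,-x_1)^{\intercal}$ the $(2,2)$-entry of $\sym(\boldsymbol x^{\perp}\otimes\boldsymbol v)$ is $-x_1 v_2$ rather than $x_1 v_2$; your later computation with $\boldsymbol v=(g,-h)^{\intercal}$ is consistent with the correct sign.
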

\begin{proof}
It is easy to check that $\boldsymbol x^{\intercal}\mathbb P_{k-1}(D;\mathbb S)\boldsymbol x=\mathbb P_{k+1}(D)\cap\ker(\pi_1)$, and
\[
\dim\boldsymbol x^{\intercal}\mathbb P_{k-1}(D;\mathbb S)\boldsymbol x=\dim\mathbb P_{k+1}(D)-3=\frac{1}{2}(k^2+5k).
\]
Noting that
\[
\dim\sym(\boldsymbol x^{\perp}\otimes\mathbb P_{k-2}(D;\mathbb R^2))=\dim\mathbb P_{k-2}(D;\mathbb R^2)=k^2-k,
\]
we get
\[
\dim\sym(\boldsymbol x^{\perp}\otimes\mathbb P_{k-2}(D;\mathbb R^2))+\dim\boldsymbol x^{\intercal}\mathbb P_{k-1}(D;\mathbb S)\boldsymbol x=\dim\mathbb P_{k-1}(D;\mathbb S).
\]
 Hence the complex \eqref{eq:hessKoszulcomplexPoly} is exact. 
\end{proof}

\begin{lemma}\label{lem:rot}
It holds
\begin{equation}\label{eq:polydecompsymx}
\mathbb P_{k-1}(D;\mathbb S)=\nabla^2\mathbb P_{k+1}(D)\oplus \sym(\boldsymbol x^{\perp}\otimes\mathbb P_{k-2}(D;\mathbb R^2)).
\end{equation}
And $\bs\rot: \sym(\boldsymbol x^{\perp}\otimes\mathbb P_{k-2}(D;\mathbb R^2)) \to  \mathbb P_{k-2}(D;\mathbb R^2)$ is a bijection.
\end{lemma}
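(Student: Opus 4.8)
The plan is to reduce both assertions to the single statement that the two summands on the right-hand side of \eqref{eq:polydecompsymx} intersect only in $\boldsymbol 0$. Granting
\[
\nabla^2\mathbb P_{k+1}(D)\cap\sym(\boldsymbol x^{\perp}\otimes\mathbb P_{k-2}(D;\mathbb R^2))=\{\boldsymbol 0\},
\]
the sum is direct; since the exactness of \eqref{eq:hesscomplexPoly} gives $\dim\nabla^2\mathbb P_{k+1}(D)=\dim\mathbb P_{k+1}(D)-3$ and the exactness of \eqref{eq:hessKoszulcomplexPoly} gives the injectivity of $\sym(\boldsymbol x^{\perp}\otimes\cdot)$, hence $\dim\sym(\boldsymbol x^{\perp}\otimes\mathbb P_{k-2}(D;\mathbb R^2))=\dim\mathbb P_{k-2}(D;\mathbb R^2)$, the dimension identities already displayed above show the direct sum exhausts $\mathbb P_{k-1}(D;\mathbb S)$, i.e. \eqref{eq:polydecompsymx} holds. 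For the map $\boldsymbol{\rot}$, exactness of \eqref{eq:hesscomplexPoly} at $\mathbb P_{k-1}(D;\mathbb S)$ says $\ker\boldsymbol{\rot}=\nabla^2\mathbb P_{k+1}(D)$ and that $\boldsymbol{\rot}$ maps $\mathbb P_{k-1}(D;\mathbb S)$ onto $\mathbb P_{k-2}(D;\mathbb R^2)$; restricting $\boldsymbol{\rot}$ to the complementary summand $\sym(\boldsymbol x^{\perp}\otimes\mathbb P_{k-2}(D;\mathbb R^2))$ is therefore injective, and, the two spaces having equal dimension, it is a bijection onto $\mathbb P_{k-2}(D;\mathbb R^2)$.

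To prove the trivial-intersection fact I would imitate the proof of Lemma \ref{lem:symmpolyspacedirectsum}. Suppose $\boldsymbol v\in\mathbb P_{k-2}(D;\mathbb R^2)$ and $w\in\mathbb P_{k+1}(D)$ satisfy $\sym(\boldsymbol x^{\perp}\otimes\boldsymbol v)=\nabla^2 w$, and contract both sides with $\boldsymbol x$ on the left and on the right. Because $\boldsymbol x^{\intercal}\boldsymbol x^{\perp}=0$, the left-hand side vanishes,
\[
\boldsymbol x^{\intercal}\sym(\boldsymbol x^{\perp}\otimes\boldsymbol v)\,\boldsymbol x=\tfrac{1}{2}\big[(\boldsymbol x^{\intercal}\boldsymbol x^{\perp})(\boldsymbol v^{\intercal}\boldsymbol x)+(\boldsymbol x^{\intercal}\boldsymbol v)((\boldsymbol x^{\perp})^{\intercal}\boldsymbol x)\big]=0,
\]
while on the right $\boldsymbol x^{\intercal}\nabla^2 w\,\boldsymbol x=(\boldsymbol x\cdot\nabla)^2 w-(\boldsymbol x\cdot\nabla)w$. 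Writing $w=\sum_d w_d$ in homogeneous components and using Euler's identity, $(\boldsymbol x\cdot\nabla)^2 w_d-(\boldsymbol x\cdot\nabla)w_d=d(d-1)w_d$, so the vanishing of the sum forces $w_d=0$ for all $d\ge 2$; hence $w\in\mathbb P_1(D)$ and $\nabla^2 w=\boldsymbol 0$. Thus $\sym(\boldsymbol x^{\perp}\otimes\boldsymbol v)=\boldsymbol 0$, and the injectivity of $\boldsymbol v\mapsto\sym(\boldsymbol x^{\perp}\otimes\boldsymbol v)$ coming from \eqref{eq:hessKoszulcomplexPoly} gives $\boldsymbol v=\boldsymbol 0$.

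I do not anticipate a real obstacle. The only genuinely new step is the choice of the pairing $\boldsymbol x^{\intercal}(\cdot)\,\boldsymbol x$, which simultaneously annihilates the Koszul term $\sym(\boldsymbol x^{\perp}\otimes\boldsymbol v)$ and turns $\nabla^2 w$ into the scalar Euler identity $\boldsymbol x^{\intercal}\nabla^2 w\,\boldsymbol x=(\boldsymbol x\cdot\nabla)^2 w-(\boldsymbol x\cdot\nabla)w$; everything else is the dimension bookkeeping already carried out in the excerpt. A more computational alternative, parallel to the explicit evaluation of $\div\boldsymbol\div(\boldsymbol x\boldsymbol x^{\intercal}q)$ in Lemma \ref{lem:symmpolyspacedirectsum}, would be to derive a closed formula for $\boldsymbol{\rot}\,\sym(\boldsymbol x^{\perp}\otimes\boldsymbol v)$ on homogeneous $\boldsymbol v$ and then apply $\boldsymbol{\rot}$ once more to conclude; but the contraction argument is shorter and avoids sign bookkeeping.
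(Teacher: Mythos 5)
Your proof follows essentially the same route as the paper: reduce to the trivial-intersection claim via dimension counting, annihilate the Koszul term by contracting with $\boldsymbol x$ on both sides, invoke Euler's formula to force $w\in\mathbb P_1(D)$, and then deduce bijectivity of $\boldsymbol\rot$ from the direct sum together with exactness of \eqref{eq:hesscomplexPoly}. The one small point where you are actually more careful than the paper is the contraction identity: the correct relation is indeed $\boldsymbol x^{\intercal}(\nabla^2 w)\boldsymbol x=(\boldsymbol x\cdot\nabla)^2 w-(\boldsymbol x\cdot\nabla)w$ (giving $d(d-1)w_d$ on homogeneous pieces and hence $w\in\mathbb P_1$), whereas the paper writes $(\boldsymbol x\cdot\nabla)^2 q=\boldsymbol x^{\intercal}(\nabla^2 q)\boldsymbol x$, omitting the first-order term; the stated conclusion is unaffected, but your version is the one that actually holds.
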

\begin{proof}
Due to the fact
\[
\dim\mathbb P_{k-1}(D;\mathbb S)=\dim\nabla^2\mathbb P_{k+1}(D) + \dim\sym(\boldsymbol x^{\perp}\otimes\mathbb P_{k-2}(D;\mathbb R^2)),
\] 
it is sufficient to prove $\nabla^2\mathbb P_{k+1}(D)\cap \sym(\boldsymbol x^{\perp}\otimes\mathbb P_{k-2}(D;\mathbb R^2))=\bs0$ for \eqref{eq:polydecompsymx}. For any $q\in\mathbb P_{k+1}(D)$ satisfying $\nabla^2q\in\sym(\boldsymbol x^{\perp}\otimes\mathbb P_{k-2}(D;\mathbb R^2))$, it holds
\[
(\bs x\cdot\nabla)^2q=\boldsymbol x^{\intercal}(\nabla^2q)\boldsymbol x=0.
\]
Hence $q\in \mathbb P_{1}(D)$, which yields the decomposition \eqref{eq:polydecompsymx}. 

On the other hand, it follows from the direct sum of \eqref{eq:polydecompsymx} that $\bs\rot: \sym(\boldsymbol x^{\perp}\otimes\mathbb P_{k-2}(D;\mathbb R^2)) \to  \mathbb P_{k-2}(D;\mathbb R^2)$ is injective.
Furthermore, we get from the complex~\eqref{eq:hesscomplexPoly} that
\[
\bs\rot\sym(\boldsymbol x^{\perp}\otimes\mathbb P_{k-2}(D;\mathbb R^2)) =\bs\rot\mathbb P_{k-1}(D;\mathbb S)=\mathbb P_{k-2}(D;\mathbb R^2).
\]
This ends the proof.
\end{proof}

\subsection{Finite element spaces for symmetric tensors}
Let $K$ be a triangle, and $b_K$ be the cubic bubble function, i.e., $b_K\in \mathbb P_3(K)\cap H_0^1(K)$.
Take the space of shape functions
\[
\boldsymbol \Sigma_{\ell,k}(K):= \mathbb C_{\ell}(K;\mathbb S)\oplus\mathbb C_k^{\oplus}(K;\mathbb S)
\]
with $k\geq 3$ and $\ell\geq k-1$.
 By Lemma \ref{lem:symmpolyspacedirectsum}, we have
\[
\mathbb P_{\min\{\ell,k\}}(K;\mathbb S)\subseteq\boldsymbol \Sigma_{\ell,k}(K) \subseteq \mathbb P_{\max\{\ell,k\}}(K;\mathbb S) \quad\textrm{ and }\quad \boldsymbol \Sigma_{k,k}(K)=\mathbb P_k(K;\mathbb S).
\]
The most interesting cases are $\ell=k-1$ and $\ell = k$ which correspond to RT and BDM $H(\div)$-conforming elements for the vector functions, respectively. 

The degrees of freedom are given by
\begin{align}
\boldsymbol \tau (\delta) & \quad\forall~\delta\in \mathcal V(K), \label{Hdivdivfemdof1}\\
(\boldsymbol  n^{\intercal}\boldsymbol \tau\boldsymbol  n, q)_e & \quad\forall~q\in\mathbb P_{\ell-2}(e),  e\in\mathcal E(K),\label{Hdivdivfemdof2}\\
(\partial_{t}(\boldsymbol  t^{\intercal}\boldsymbol \tau\boldsymbol  n)+\boldsymbol  n^{\intercal}\boldsymbol \div\boldsymbol \tau, q)_e & \quad\forall~q\in\mathbb P_{\ell-1}(e),  e\in\mathcal E(K),\label{Hdivdivfemdof3}\\
(\boldsymbol \tau, \boldsymbol \varsigma)_K & \quad\forall~\boldsymbol \varsigma\in\nabla^2\mathbb P_{k-2}(K)\oplus \sym (\bs x^{\perp}\otimes \mathbb P_{\ell-2}(K;\mathbb R^2)). \label{Hdivdivfemdof4}
\end{align}

Before we prove the unisolvence, we give some characterization of space of shape functions.
\begin{lemma}\label{lm:trace}
For any $\boldsymbol \tau\in\boldsymbol \Sigma_{\ell,k}(K)$, we have 
\begin{enumerate}
 \item $\boldsymbol \tau\boldsymbol x^{\perp} \in \mathbb P_{\ell+1}(K;\mathbb R^2)$ 
 
 \smallskip
 \item $\boldsymbol  n^{\intercal}\boldsymbol \tau\boldsymbol  n|_e\in\mathbb P_{\ell}(e)\quad\forall~e\in\mathcal E(K)$

\smallskip
 \item $(\partial_{t}(\boldsymbol  t^{\intercal}\boldsymbol \tau\boldsymbol  n)+\boldsymbol  n^{\intercal}\boldsymbol \div\boldsymbol \tau)|_e\in\mathbb P_{\ell-1}(e)\quad\forall~e\in\mathcal E(K)$.
\end{enumerate}
%
%
\end{lemma}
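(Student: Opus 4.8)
The plan is to analyze the shape function space $\boldsymbol\Sigma_{\ell,k}(K) = \mathbb C_\ell(K;\mathbb S)\oplus\mathbb C_k^\oplus(K;\mathbb S)$ piece by piece, using the fact that $\mathbb C_\ell(K;\mathbb S) = \sym\boldsymbol\curl\,\mathbb P_{\ell+1}(K;\mathbb R^2)$ and $\mathbb C_k^\oplus(K;\mathbb S) = \boldsymbol x\boldsymbol x^\intercal\mathbb P_{k-2}(K)$. For item (1), I would write a generic $\boldsymbol\tau = \sym\boldsymbol\curl\,\boldsymbol v + \boldsymbol x\boldsymbol x^\intercal q$ with $\boldsymbol v\in\mathbb P_{\ell+1}(K;\mathbb R^2)$ and $q\in\mathbb P_{k-2}(K)$. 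The second piece satisfies $(\boldsymbol x\boldsymbol x^\intercal q)\boldsymbol x^\perp = \boldsymbol x(\boldsymbol x^\intercal\boldsymbol x^\perp)q = \boldsymbol 0$ since $\boldsymbol x^\intercal\boldsymbol x^\perp = 0$, so only the $\sym\boldsymbol\curl$ part contributes. Then $(\sym\boldsymbol\curl\,\boldsymbol v)\boldsymbol x^\perp$ is a polynomial of degree at most $\deg(\sym\boldsymbol\curl\,\boldsymbol v) + 1 \le \ell + 1$ (note $\sym\boldsymbol\curl\,\boldsymbol v\in\mathbb P_\ell(K;\mathbb S)$), giving $\boldsymbol\tau\boldsymbol x^\perp\in\mathbb P_{\ell+1}(K;\mathbb R^2)$.

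For items (2) and (3), the key observation is that the traces $\boldsymbol n^\intercal\boldsymbol\tau\boldsymbol n$ and $\partial_t(\boldsymbol t^\intercal\boldsymbol\tau\boldsymbol n) + \boldsymbol n^\intercal\boldsymbol\div\boldsymbol\tau$ are the quantities appearing in the Green's identity, and Lemma \ref{lm:tauv} tells us that when $\boldsymbol\tau = \sym\boldsymbol\curl\,\boldsymbol v$ these equal $\boldsymbol n^\intercal\partial_t\boldsymbol v$ and $\partial_t(\boldsymbol t^\intercal\partial_t\boldsymbol v)$ respectively. Since $\boldsymbol v\in\mathbb P_{\ell+1}(K;\mathbb R^2)$, on a fixed edge $e$ with constant $\boldsymbol n,\boldsymbol t$, the function $\partial_t\boldsymbol v|_e$ is a vector polynomial of degree $\le \ell$ in the edge parameter, so $\boldsymbol n^\intercal\partial_t\boldsymbol v|_e\in\mathbb P_\ell(e)$, and $\partial_t(\boldsymbol t^\intercal\partial_t\boldsymbol v)|_e$ loses one more degree, landing in $\mathbb P_{\ell-1}(e)$. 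So for the $\mathbb C_\ell$-part the claims hold immediately. It remains to handle the Koszul part $\boldsymbol x\boldsymbol x^\intercal q$: I would compute directly that $\boldsymbol n^\intercal(\boldsymbol x\boldsymbol x^\intercal q)\boldsymbol n = (\boldsymbol n^\intercal\boldsymbol x)^2 q$, a polynomial of degree $\le (k-2)+2 = k \le \ell+1$; but this overshoots $\ell$ by one unless I use more structure. The resolution: on edge $e$, $\boldsymbol n^\intercal\boldsymbol x = \boldsymbol n^\intercal(\boldsymbol x - \boldsymbol x_0) + \boldsymbol n^\intercal\boldsymbol x_0$ where $\boldsymbol x_0$ is any point on $e$; since $\boldsymbol x-\boldsymbol x_0$ is tangent to $e$ there, $\boldsymbol n^\intercal\boldsymbol x|_e$ is the constant $\boldsymbol n^\intercal\boldsymbol x_0 = d_e$ (the signed distance from origin to the line containing $e$). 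Hence $\boldsymbol n^\intercal(\boldsymbol x\boldsymbol x^\intercal q)\boldsymbol n|_e = d_e^2\, q|_e$, which is in $\mathbb P_{k-2}(e)\subseteq\mathbb P_\ell(e)$ since $\ell\ge k-1 \ge k-2$.

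For item (3) on the Koszul part, I would use the stronger computation already recorded inside the proof of Lemma \ref{lem:symmpolyspacedirectsum}: $\boldsymbol\div(\boldsymbol x\boldsymbol x^\intercal q) = (\div(\boldsymbol x q) + q)\boldsymbol x$. So $\boldsymbol n^\intercal\boldsymbol\div(\boldsymbol x\boldsymbol x^\intercal q) = (\div(\boldsymbol x q)+q)(\boldsymbol n^\intercal\boldsymbol x)$, which restricted to $e$ equals $d_e(\div(\boldsymbol x q)+q)|_e\in\mathbb P_{k-2}(e)$. For the twisting term, $\boldsymbol t^\intercal(\boldsymbol x\boldsymbol x^\intercal q)\boldsymbol n = (\boldsymbol t^\intercal\boldsymbol x)(\boldsymbol n^\intercal\boldsymbol x)q$, and on $e$ this is $d_e(\boldsymbol t^\intercal\boldsymbol x)q|_e$ where $\boldsymbol t^\intercal\boldsymbol x|_e$ is affine in the edge parameter, so the product lies in $\mathbb P_{k-1}(e)$, and $\partial_t$ of it lies in $\mathbb P_{k-2}(e)$. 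Adding, $(\partial_t(\boldsymbol t^\intercal\boldsymbol\tau\boldsymbol n) + \boldsymbol n^\intercal\boldsymbol\div\boldsymbol\tau)|_e\in\mathbb P_{k-2}(e)\subseteq\mathbb P_{\ell-1}(e)$ since $\ell-1\ge k-2$. Combining with the $\mathbb C_\ell$ contribution gives the claim. The main obstacle is the bookkeeping in the last step: one must be careful that the degree count on the Koszul part does not exceed the stated bounds, and the trick that $\boldsymbol n^\intercal\boldsymbol x$ is \emph{constant} along each edge (while $\boldsymbol t^\intercal\boldsymbol x$ is affine) is what makes everything fit; beyond that, each item is a short direct computation.
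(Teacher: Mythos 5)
Your proof is correct and follows essentially the same route as the paper: split $\boldsymbol\tau$ by the decomposition $\boldsymbol\Sigma_{\ell,k}(K)=\mathbb C_{\ell}(K;\mathbb S)\oplus\mathbb C_k^{\oplus}(K;\mathbb S)$, observe $(\boldsymbol x\boldsymbol x^{\intercal}q)\boldsymbol x^{\perp}=\boldsymbol 0$ for item (1), and exploit the crucial fact that $\boldsymbol n^{\intercal}\boldsymbol x$ is constant along each edge of the triangle to control the traces of the Koszul part for items (2) and (3). The only cosmetic difference is that you invoke Lemma~\ref{lm:tauv} to handle the $\mathbb C_{\ell}$ contribution explicitly, while the paper leaves that part implicit since $\mathbb C_{\ell}(K;\mathbb S)\subseteq\mathbb P_{\ell}(K;\mathbb S)$ already gives the needed degree bounds directly.
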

\begin{proof}
(1) is a direct consequence of the Koszul complex \eqref{eq:divdivKoszulcomplexPoly}. 
Take any $\boldsymbol \tau=\boldsymbol  x\boldsymbol  x^{\intercal}q\in\mathbb C_k^{\oplus}(K;\mathbb S)$ with $q\in\mathbb P_{k-2}(K)$.
Since $\boldsymbol  n^{\intercal}\boldsymbol  x$ is constant on each edge of $K$,
\[
 \boldsymbol  n^{\intercal}\boldsymbol \tau\boldsymbol  n|_e=(\boldsymbol  n^{\intercal}\boldsymbol  x)^2q\in\mathbb P_{k-2}(e),
\]
\[
 (\partial_{t}(\boldsymbol  t^{\intercal}\boldsymbol \tau\boldsymbol  n)+\boldsymbol  n^{\intercal}\boldsymbol \div\boldsymbol \tau)|_e=\boldsymbol  n^{\intercal}\boldsymbol  x(\partial_{t}\left(\boldsymbol  t^{\intercal}\boldsymbol x q)+\div(\boldsymbol  x q)+q\right)\in\mathbb P_{k-2}(e).
\]
Thus the results (2) and (3) hold from the requirement $\ell\geq k-1$.
\end{proof}

We now prove the unisolvence as follows.

\begin{lemma}\label{lem:unisovlenHdivdivfem}
The degrees of freedom \eqref{Hdivdivfemdof1}-\eqref{Hdivdivfemdof4} are unisolvent for $\boldsymbol \Sigma_{\ell,k}(K)$.
\end{lemma}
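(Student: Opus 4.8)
The plan is to establish unisolvence by the standard two-step argument: first verify that the number of degrees of freedom equals $\dim \boldsymbol\Sigma_{\ell,k}(K)$, then show that a tensor $\boldsymbol\tau\in\boldsymbol\Sigma_{\ell,k}(K)$ for which all the functionals \eqref{Hdivdivfemdof1}--\eqref{Hdivdivfemdof4} vanish must be zero. For the dimension count, recall $\dim\boldsymbol\Sigma_{\ell,k}(K)=\dim\mathbb C_\ell(K;\mathbb S)+\dim\mathbb C_k^{\oplus}(K;\mathbb S)=(\ell^2+5\ell+3)+\tfrac12k(k-1)$, and I would tally the functionals: $3\times 3=9$ from the vertices \eqref{Hdivdivfemdof1}, $3(\ell-1)$ from \eqref{Hdivdivfemdof2}, $3\ell$ from \eqref{Hdivdivfemdof3}, and $\dim\nabla^2\mathbb P_{k-2}(K)+\dim\sym(\bs x^\perp\otimes\mathbb P_{\ell-2}(K;\mathbb R^2))$ from \eqref{Hdivdivfemdof4}; using $\dim\nabla^2\mathbb P_{k-2}(K)=\tfrac12k(k-1)-3+1$ (equivalently $\dim\mathbb P_{k-2}-$ shift) and $\dim\sym(\bs x^\perp\otimes\mathbb P_{\ell-2}(K;\mathbb R^2))=\dim\mathbb P_{\ell-2}(K;\mathbb R^2)=(\ell-1)\ell$ from Lemma~\ref{lem:rot}, and check the totals match. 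Since $k$ is fixed, this is a routine finite arithmetic verification.

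For the kernel argument, suppose $\boldsymbol\tau\in\boldsymbol\Sigma_{\ell,k}(K)$ annihilates all DOFs. The key is to use the edge traces. By Lemma~\ref{lm:trace}, on each edge $e$, $\boldsymbol n^\intercal\boldsymbol\tau\boldsymbol n|_e\in\mathbb P_\ell(e)$ and $(\partial_t(\boldsymbol t^\intercal\boldsymbol\tau\boldsymbol n)+\boldsymbol n^\intercal\boldsymbol\div\boldsymbol\tau)|_e\in\mathbb P_{\ell-1}(e)$. Vanishing of \eqref{Hdivdivfemdof2} against all $q\in\mathbb P_{\ell-2}(e)$ kills all but the two highest-degree moments; those two are controlled by the vertex values $\boldsymbol\tau(\delta)=\boldsymbol 0$ from \eqref{Hdivdivfemdof1}, which in particular forces $\boldsymbol n^\intercal\boldsymbol\tau\boldsymbol n$ to vanish at the two endpoints of $e$. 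Hence $\boldsymbol n^\intercal\boldsymbol\tau\boldsymbol n|_e=0$ on each $e$. Similarly, \eqref{Hdivdivfemdof3} tested against $\mathbb P_{\ell-1}(e)$ already spans all of $\mathbb P_{\ell-1}(e)$, so $(\partial_t(\boldsymbol t^\intercal\boldsymbol\tau\boldsymbol n)+\boldsymbol n^\intercal\boldsymbol\div\boldsymbol\tau)|_e=0$ on each $e$. Thus all boundary data in the Green's identity \eqref{eq:greenidentitydivdiv} vanish.

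Now apply Lemma~\ref{lm:Green} on $K$: for every $v\in H^2(K)$, the boundary terms drop out, giving $(\div\boldsymbol\div\boldsymbol\tau,v)_K=(\boldsymbol\tau,\nabla^2 v)_K$. Taking $v\in\mathbb P_{k}(K)$ (so $\nabla^2 v$ ranges over $\nabla^2\mathbb P_k(K)$, which contains $\nabla^2\mathbb P_{k-2}(K)$) and combining with the vanishing of the interior DOF \eqref{Hdivdivfemdof4} against $\nabla^2\mathbb P_{k-2}(K)$, one deduces $\div\boldsymbol\div\boldsymbol\tau$ is $L^2$-orthogonal to $\mathbb P_{k-2}(K)$; but $\div\boldsymbol\div\boldsymbol\tau\in\mathbb P_{\max\{\ell,k\}-2}(K)$ — here one must be careful since $\ell$ may exceed $k$, so I would instead note that the $\mathbb C_\ell$ part has $\div\boldsymbol\div=0$ and only the $\mathbb C_k^\oplus$ part contributes, landing $\div\boldsymbol\div\boldsymbol\tau$ in $\mathbb P_{k-2}(K)$, whence $\div\boldsymbol\div\boldsymbol\tau=0$. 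By exactness of \eqref{eq:divdivcomplexPoly} (or rather the decomposition in Lemma~\ref{lem:symmpolyspacedirectsum}, which says $\div\boldsymbol\div$ is injective on $\mathbb C_k^\oplus$), the $\mathbb C_k^\oplus$ component of $\boldsymbol\tau$ vanishes, so $\boldsymbol\tau=\sym\boldsymbol\curl\,\boldsymbol v$ for some $\boldsymbol v\in\mathbb P_{\ell+1}(K;\mathbb R^2)$. Then by Lemma~\ref{lm:tauv}, the vanishing normal-normal trace reads $\boldsymbol n^\intercal\partial_t\boldsymbol v=0$ on $\partial K$ and the vanishing effective shear reads $\partial_t(\boldsymbol t^\intercal\partial_t\boldsymbol v)=0$ on $\partial K$; integrating these along edges, using that $\boldsymbol v$ and its tangential derivative are determined up to the lowest-order boundary data, and invoking vanishing vertex values of $\boldsymbol\tau$, I expect to conclude $\boldsymbol v\in\boldsymbol{RT}$ modulo lower-order pieces, hence $\boldsymbol\tau=\boldsymbol 0$; what remains is to feed the interior DOF \eqref{Hdivdivfemdof4} against $\sym(\bs x^\perp\otimes\mathbb P_{\ell-2}(K;\mathbb R^2))$ together with the decomposition \eqref{eq:polydecompsymx} of Lemma~\ref{lem:rot} to kill the remaining degrees of freedom in $\boldsymbol v$.

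The main obstacle is the last step: pinning down $\boldsymbol v$ with $\boldsymbol\tau=\sym\boldsymbol\curl\,\boldsymbol v=\boldsymbol 0$. The boundary conditions $\boldsymbol n^\intercal\partial_t\boldsymbol v=0$ and $\partial_t(\boldsymbol t^\intercal\partial_t\boldsymbol v)=0$ on $\partial K$ do not immediately force $\boldsymbol\tau=0$ inside $K$; one needs an interior argument. The cleanest route is probably to not pass through $\boldsymbol v$ at all for the interior part, but rather: having shown $\div\boldsymbol\div\boldsymbol\tau=0$ and all edge traces vanish, use Green's identity once more with $v$ chosen so that $\nabla^2 v$ exhausts $\nabla^2\mathbb P_{k-2}(K)$, forcing $(\boldsymbol\tau,\nabla^2 q)_K=0$ for all $q\in\mathbb P_{k-2}(K)$; combined with $(\boldsymbol\tau,\sym(\bs x^\perp\otimes\boldsymbol w))_K=0$ for all $\boldsymbol w\in\mathbb P_{\ell-2}(K;\mathbb R^2)$ and the fact that $\boldsymbol\tau$, having zero $\div\boldsymbol\div$, lies in $\mathbb C_\ell(K;\mathbb S)$ with zero boundary traces — identifying $\mathbb C_\ell(K;\mathbb S)$ with a subspace on which $(\,\cdot\,,\nabla^2\mathbb P_{k-2}\oplus\sym(\bs x^\perp\otimes\mathbb P_{\ell-2}))$ plus boundary traces is a norm via the decomposition \eqref{eq:polydecompsymx} scaled appropriately — one gets $\boldsymbol\tau=\boldsymbol 0$. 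Verifying that this pairing is nondegenerate on the boundary-trace-free subspace of $\mathbb C_\ell(K;\mathbb S)$ is the crux, and it is exactly where the dimension count from the first step is used, together with Lemma~\ref{lem:rot}.
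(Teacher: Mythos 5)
Your Steps 1 and 2 (dimension count, vanishing of both edge traces, and $\div\boldsymbol\div\boldsymbol\tau=0$ via Green's identity tested against $\mathbb P_{k-2}(K)$) follow the paper's proof almost exactly, up to a small slip: you should test Green's identity against $v\in\mathbb P_{k-2}(K)$ rather than $\mathbb P_k(K)$, since the interior DOF \eqref{Hdivdivfemdof4} only provides $(\boldsymbol\tau,\nabla^2 q)_K=0$ for $q\in\mathbb P_{k-2}(K)$.

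The genuine gap is the final step, where you yourself write ``I expect to conclude'' and ``what remains is\dots''. Both routes you sketch are incomplete or circular. The first route (integrating the boundary conditions on $\boldsymbol v$ and then feeding the interior DOF) needs three concrete ingredients that you do not supply: (a) the potential $\boldsymbol v$ with $\boldsymbol\tau=\sym\boldsymbol\curl\,\boldsymbol v$ is only determined modulo $\boldsymbol{RT}$, so one must \emph{normalize} by choosing the representative with $Q_0^e(\boldsymbol n^\intercal\boldsymbol v)=0$ on each edge; with this normalization, $\partial_t(\boldsymbol n^\intercal\boldsymbol v)|_e=0$ gives $\boldsymbol n^\intercal\boldsymbol v|_{\partial K}=0$, hence $\boldsymbol v(\delta)=\boldsymbol0$ at vertices (two independent normals), and then $\partial_{tt}(\boldsymbol t^\intercal\boldsymbol v)|_e=0$ with vanishing endpoint values gives $\boldsymbol t^\intercal\boldsymbol v|_{\partial K}=0$; (b) this forces the bubble factorization $\boldsymbol v=b_K\boldsymbol\psi_{\ell-2}$ with $\boldsymbol\psi_{\ell-2}\in\mathbb P_{\ell-2}(K;\mathbb R^2)$; (c) one then uses the bijection $\boldsymbol\rot:\sym(\boldsymbol x^\perp\otimes\mathbb P_{\ell-2}(K;\mathbb R^2))\to\mathbb P_{\ell-2}(K;\mathbb R^2)$ of Lemma~\ref{lem:rot} to pick $\boldsymbol\varsigma=\sym(\boldsymbol x^\perp\otimes\boldsymbol\phi_{\ell-2})$ with $\boldsymbol\rot\boldsymbol\varsigma=\boldsymbol\psi_{\ell-2}$, and integrates by parts (boundary term vanishes because $\boldsymbol v|_{\partial K}=\boldsymbol0$) to get $0=(\boldsymbol\tau,\boldsymbol\varsigma)_K=(b_K\boldsymbol\psi_{\ell-2},\boldsymbol\psi_{\ell-2})_K$, whence $\boldsymbol\psi_{\ell-2}=\boldsymbol0$. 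Your alternative route (``the pairing is a norm on the trace-free subspace of $\mathbb C_\ell(K;\mathbb S)$ \dots verifying nondegeneracy is the crux'') is exactly the unisolvence statement itself rephrased, so asserting it begs the question; and the dimension count alone cannot supply nondegeneracy — it is necessary, not sufficient. The bubble-plus-$\boldsymbol\rot$ device is the missing idea.
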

\begin{proof}
We first count the number of the degrees of freedom \eqref{Hdivdivfemdof1}-\eqref{Hdivdivfemdof4}  and the dimension of the space, i.e., $\dim\boldsymbol \Sigma_{\ell,k}(K)$.
Both of them are $$\ell^2+5\ell+3+\frac{1}{2}k(k-1).$$


Then suppose all the degrees of freedom \eqref{Hdivdivfemdof1}-\eqref{Hdivdivfemdof4} applied to $\boldsymbol \tau$ vanish. We are going to prove the function $\boldsymbol \tau = 0$. 

\medskip

\noindent {\em Step 1. Trace is vanished.} By the vanishing degrees of freedom \eqref{Hdivdivfemdof1}-\eqref{Hdivdivfemdof3} and (2)-(3) in Lemma \ref{lm:trace}, we get $(\boldsymbol n^{\intercal}\boldsymbol \tau\boldsymbol n)|_{\partial K}=0$ and $(\partial_{t}(\boldsymbol  t^{\intercal}\boldsymbol \tau\boldsymbol  n)+\boldsymbol  n^{\intercal}\boldsymbol \div\boldsymbol \tau)|_{\partial K}=0$.

\medskip

\noindent {\em Step 2. Divdiv is vanished.}
For any $q\in\mathbb P_{k-2}(K)$, it holds from the Green's identity \eqref{eq:greenidentitydivdiv} that
\begin{align*}
(\div\boldsymbol \div\boldsymbol \tau, q)_K&=(\boldsymbol \tau, \nabla^2q)_K-\sum_{e\in\mathcal E(K)}\sum_{\delta\in\partial e}\sign_{e,\delta}(\boldsymbol  t^{\intercal}\boldsymbol \tau\boldsymbol  n)(\delta)q(\delta) \\
&\quad-\sum_{e\in\mathcal E(K)}\left[(\boldsymbol  n^{\intercal}\boldsymbol \tau\boldsymbol  n, \partial_n q)_{e}-(\partial_{t}(\boldsymbol  t^{\intercal}\boldsymbol \tau\boldsymbol  n)+\boldsymbol  n^{\intercal}\boldsymbol \div\boldsymbol \tau,  q)_{e}\right].
\end{align*}
Since the trace is zero, $\bs \tau$ is zero at vertices, and $(\bs \tau, \nabla^2 q)_K = 0$ from \eqref{Hdivdivfemdof4},  we conclude $\div\boldsymbol \div\boldsymbol \tau=0$. 

\medskip

\noindent {\em Step 3. Kernel of divdiv is vanished.}
Thus by the polynomial complex \eqref{eq:divdivcomplexPoly}, there exists $\boldsymbol  v\in\mathbb P_{\ell+1}(K;\mathbb R^2)/\boldsymbol {RT}$ such that
\[
\boldsymbol \tau=\sym\boldsymbol \curl \, \boldsymbol  v\quad \textrm{ and }\quad Q_{0}^e(\boldsymbol  n^{\intercal}\boldsymbol  v)=0\quad\forall~e\in\mathcal E(K).
\]
Here we can take $Q_{0}^e(\boldsymbol  n^{\intercal}\boldsymbol  v)=0$ thanks to the degree of freedom of the lowest order Raviart-Thomas element \cite{RaviartThomas1977}. We will prove $\bs v = 0$ by similar procedure. 

By Lemma \ref{lm:tauv}, the fact $(\boldsymbol  n^{\intercal}\boldsymbol \tau\boldsymbol  n)|_{\partial K}=0$ implies
\[
\partial_t(\boldsymbol  n^{\intercal}\boldsymbol  v)|_{\partial K}= (\boldsymbol  n^{\intercal}\boldsymbol \tau\boldsymbol  n)|_{\partial K}=0.
\]
Hence $\boldsymbol  n^{\intercal}\boldsymbol  v|_{\partial K}=0$.
This also means $\boldsymbol  v(\delta)=\boldsymbol 0$ for each $\delta\in\mathcal V(K)$.

Again by Lemma \ref{lm:tauv}, since
\begin{equation*}
\partial_{t}(\boldsymbol  t^{\intercal}\boldsymbol \tau\boldsymbol  n)+\boldsymbol  n^{\intercal}\boldsymbol \div\boldsymbol \tau= \partial_t(\boldsymbol  t^{\intercal}\partial_t\boldsymbol  v)
\end{equation*}
and $(\partial_{t}(\boldsymbol  t^{\intercal}\boldsymbol \tau\boldsymbol  n)+\boldsymbol  n^{\intercal}\boldsymbol \div\boldsymbol \tau)|_{\partial K}=0$, we acquire
\[
\partial_{tt}(\boldsymbol  t^{\intercal}\boldsymbol  v)|_{\partial K}=0.
\]
That is $\boldsymbol  t^{\intercal}\boldsymbol  v|_e\in\mathbb P_1(e)$ on each edge $e\in\mathcal E(K)$. Noting that $\boldsymbol  v(\delta)=\boldsymbol 0$ for each $\delta\in\mathcal V(K)$, we get $\boldsymbol  t^{\intercal}\boldsymbol  v|_{\partial K}=0$ and consequently $\boldsymbol  v|_{\partial K}=\boldsymbol 0$, i.e., 
$$
\boldsymbol  v = b_K\psi_{\ell-2},\quad \text{for some } \psi_{\ell-2} \in \mathbb P_{\ell-2}(K;\mathbb R^2).
$$

We then use the fact $\rot: \sym(\boldsymbol x^{\perp}\otimes\mathbb P_{\ell-2}(K;\mathbb R^2)) \to  \mathbb P_{\ell-2}(K;\mathbb R^2)$ is bijection, cf. Lemma \ref{lem:rot}, to find $\phi_{\ell-2}$ s.t. $\rot (\sym\boldsymbol x^{\perp}\otimes\phi_{\ell -2}) = \psi_{\ell-2}$.

Finally we finish the proof by choosing $\boldsymbol \varsigma = \sym (\boldsymbol x^{\perp}\otimes \phi_{\ell -2})$ in \eqref{Hdivdivfemdof4}. The fact 
$$
(\bs \tau, \boldsymbol \varsigma)_K = (\sym\boldsymbol \curl b_K\psi_{\ell-2},  \sym (\boldsymbol x^{\perp}\otimes \phi_{\ell -2}))_K = ( b_K\psi_{\ell-2}, \psi_{\ell-2} )_K = 0
$$
will imply $\psi_{\ell-2} = 0$ and consequently $\bs v = 0, \bs \tau = 0$.  
\end{proof}

%
%
%

\subsection{Finite element $\div$-$\div$ complex}
Recall the $\div$-$\div$ Hilbert complexes with different regularity
\begin{equation}\label{eq:divdivcomplexL2}
\resizebox{.9\hsize}{!}{$
\boldsymbol  {RT}\autorightarrow{$\subset$}{} \boldsymbol  H^1(K;\mathbb R^2)\autorightarrow{$\sym\boldsymbol \curl$}{} \boldsymbol{H}(\div\boldsymbol{\div},K; \mathbb{S}) \autorightarrow{$\div \boldsymbol{\div }$}{} L^2(K)\autorightarrow{}{}0$},
\end{equation}
\begin{equation}\label{eq:divdivcomplexH2}
\boldsymbol  {RT}\autorightarrow{$\subset$}{} \boldsymbol  H^3(K;\mathbb R^2)\autorightarrow{$\sym\boldsymbol \curl$}{} \boldsymbol{H}^2(K; \mathbb{S}) \autorightarrow{$\div\boldsymbol{\div}$}{} L^2(K)\autorightarrow{}{}0.
\end{equation}
Both complexes \eqref{eq:divdivcomplexL2} and \eqref{eq:divdivcomplexH2} are exact; see \cite{ChenHuang2018}.

We have constructed finite element spaces for $\boldsymbol{H}(\div\boldsymbol{\div},K; \mathbb{S}) $. Now we define a vectorial $H^1$-conforming finite element. Let $\boldsymbol V_{\ell+1}(K):=\mathbb P_{\ell+1}(K;\mathbb R^2)$ with $\ell\geq2$.
The local degrees of freedom are given by
\begin{align}
\boldsymbol  v (\delta), \nabla\boldsymbol  v (\delta) & \quad\forall~\delta\in \mathcal V(K), \label{HermitfemVdof1}\\
(\boldsymbol  v, \boldsymbol  q)_e & \quad\forall~\boldsymbol  q\in\mathbb P_{\ell-3}(e;\mathbb R^2),  e\in\mathcal E(K),\label{HermitfemVdof2}\\
(\boldsymbol  v, \boldsymbol  q)_K & \quad\forall~\boldsymbol  q\in\mathbb P_{\ell-2}(K;\mathbb R^2). \label{HermitfemVdof3}
\end{align}
This finite element is just the vectorial Hermite element \cite{BrennerScott2008,Ciarlet1978}. 

\begin{lemma}
For any triangle $K$, both the polynomial complexes
\begin{equation}\label{eq:divdivcomplexPolyvar}
\boldsymbol  {RT}\autorightarrow{$\subset$}{} \boldsymbol V_{\ell+1}(K)\autorightarrow{$\sym\boldsymbol \curl$}{} \boldsymbol \Sigma_{\ell,k}(K) \autorightarrow{$\div\boldsymbol{\div}$}{} \mathbb P_{k-2}(K)\autorightarrow{}{}0
\end{equation}
and
\begin{equation}\label{eq:divdivcomplexPolyvar0}
\boldsymbol 0\autorightarrow{$\subset$}{} \mathring{\boldsymbol V}_{\ell+1}(K)\autorightarrow{$\sym\boldsymbol \curl$}{} \mathring{\boldsymbol \Sigma}_{\ell,k}(K) \autorightarrow{$\div\boldsymbol{\div}$}{} \mathring{\mathbb P}_{k-2}(K)\autorightarrow{}{}0
\end{equation}
are exact, where
\begin{align*}
\mathring{\boldsymbol V}_{\ell+1}(K)&:=\{\boldsymbol  v\in\boldsymbol V_{\ell+1}(K): \textrm{all degrees of freedom } \eqref{HermitfemVdof1}-\eqref{HermitfemVdof2} \textrm{ vanish}\},
\\
\mathring{\boldsymbol \Sigma}_{\ell,k}(K)&:=\{\boldsymbol  \tau\in\boldsymbol \Sigma_{\ell,k}(K): \textrm{all degrees of freedom } \eqref{Hdivdivfemdof1}-\eqref{Hdivdivfemdof3} \textrm{ vanish}\},
\\
\mathring{\mathbb P}_{k-2}(K)&:=\mathbb P_{k-2}(K)/\mathbb P_{1}(K).
\end{align*}
\end{lemma}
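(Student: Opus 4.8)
The plan is to treat \eqref{eq:divdivcomplexPolyvar} first, then deduce \eqref{eq:divdivcomplexPolyvar0} from it by a diagram-chase/dimension-count argument. For \eqref{eq:divdivcomplexPolyvar}, the essential ingredients are all already in place: the polynomial complex \eqref{eq:divdivcomplexPoly} is exact, and $\boldsymbol V_{\ell+1}(K) = \mathbb P_{\ell+1}(K;\mathbb R^2)$, so the only thing to check is that the three arrows fit together as claimed, i.e. (a) $\boldsymbol{RT} \subset \ker(\sym\boldsymbol\curl|_{\boldsymbol V_{\ell+1}(K)})$ with equality, (b) $\sym\boldsymbol\curl\,\boldsymbol V_{\ell+1}(K) = \ker(\div\boldsymbol\div|_{\boldsymbol\Sigma_{\ell,k}(K)})$, and (c) $\div\boldsymbol\div\,\boldsymbol\Sigma_{\ell,k}(K) = \mathbb P_{k-2}(K)$. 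Item (a) is immediate since $\boldsymbol\curl$ of a polynomial vector field vanishes iff the field is a gradient of a scalar, and combined with symmetry this pins the kernel down to $\boldsymbol{RT}$ (alternatively: it is the kernel part of \eqref{eq:divdivcomplexPoly} restricted to total degree $\le \ell+1$). For (c): by definition $\boldsymbol\Sigma_{\ell,k}(K) = \mathbb C_\ell(K;\mathbb S) \oplus \mathbb C_k^\oplus(K;\mathbb S)$, and $\div\boldsymbol\div$ kills $\mathbb C_\ell(K;\mathbb S) = \sym\boldsymbol\curl\,\mathbb P_{\ell+1}(K;\mathbb R^2)$ while Lemma~\ref{lem:symmpolyspacedirectsum} gives that $\div\boldsymbol\div : \mathbb C_k^\oplus(K;\mathbb S) \to \mathbb P_{k-2}(K)$ is a bijection; hence $\div\boldsymbol\div\,\boldsymbol\Sigma_{\ell,k}(K) = \mathbb P_{k-2}(K)$ and, simultaneously, $\ker(\div\boldsymbol\div|_{\boldsymbol\Sigma_{\ell,k}(K)}) = \mathbb C_\ell(K;\mathbb S) = \sym\boldsymbol\curl\,\boldsymbol V_{\ell+1}(K)$, which is (b). This also makes the whole thing a short exact-sequence statement once one records $\dim\boldsymbol\Sigma_{\ell,k}(K) = \dim\mathbb C_\ell + \dim\mathbb C_k^\oplus$, already computed above, so exactness at $\boldsymbol\Sigma_{\ell,k}(K)$ can equivalently be confirmed by the dimension identity $\dim\boldsymbol\Sigma_{\ell,k}(K) = \dim(\sym\boldsymbol\curl\,\boldsymbol V_{\ell+1}(K)) + \dim\mathbb P_{k-2}(K)$.

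For the ``bubble'' complex \eqref{eq:divdivcomplexPolyvar0}, the plan is to verify that $\sym\boldsymbol\curl$ and $\div\boldsymbol\div$ map the decorated spaces into one another, and then close the argument with a dimension count. That the arrows are well-defined on the subspaces uses the Green's identity \eqref{eq:greenidentitydivdiv} together with Lemma~\ref{lm:tauv}: if $\boldsymbol v \in \mathring{\boldsymbol V}_{\ell+1}(K)$, i.e. $\boldsymbol v$ and $\nabla\boldsymbol v$ vanish at all vertices and the edge moments \eqref{HermitfemVdof2} vanish, then by the trace identities $\boldsymbol n^\intercal(\sym\boldsymbol\curl\,\boldsymbol v)\boldsymbol n = \boldsymbol n^\intercal\partial_t\boldsymbol v$ and $\partial_t(\boldsymbol t^\intercal(\sym\boldsymbol\curl\,\boldsymbol v)\boldsymbol n) + \boldsymbol n^\intercal\boldsymbol\div(\sym\boldsymbol\curl\,\boldsymbol v) = \partial_t(\boldsymbol t^\intercal\partial_t\boldsymbol v)$ are determined by boundary data of $\boldsymbol v$ that is forced to vanish (this is exactly the computation run in Step~3 of the proof of Lemma~\ref{lem:unisovlenHdivdivfem}), so the degrees of freedom \eqref{Hdivdivfemdof1}--\eqref{Hdivdivfemdof3} vanish on $\sym\boldsymbol\curl\,\boldsymbol v$, i.e. $\sym\boldsymbol\curl\,\boldsymbol v \in \mathring{\boldsymbol\Sigma}_{\ell,k}(K)$. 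Likewise, for $\boldsymbol\tau \in \mathring{\boldsymbol\Sigma}_{\ell,k}(K)$ the Green's identity \eqref{eq:greenidentitydivdiv} tested against $q\in\mathbb P_1(K)$ (whose $\nabla^2 q = 0$) together with the vanishing of all boundary degrees of freedom forces $(\div\boldsymbol\div\,\boldsymbol\tau, q)_K = 0$ for every affine $q$, so $\div\boldsymbol\div\,\boldsymbol\tau \in \mathring{\mathbb P}_{k-2}(K) = \mathbb P_{k-2}(K)/\mathbb P_1(K)$ (interpreting the quotient as the $L^2$-orthogonal complement of $\mathbb P_1(K)$ in $\mathbb P_{k-2}(K)$). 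Injectivity of $\sym\boldsymbol\curl$ on $\mathring{\boldsymbol V}_{\ell+1}(K)$ follows since $\sym\boldsymbol\curl\,\boldsymbol v = 0$ gives $\boldsymbol v\in\boldsymbol{RT}$, and $\boldsymbol{RT}\cap\mathring{\boldsymbol V}_{\ell+1}(K) = \boldsymbol 0$; exactness at $\mathring{\boldsymbol\Sigma}_{\ell,k}(K)$ in the middle follows because $\ker(\div\boldsymbol\div|_{\mathring{\boldsymbol\Sigma}_{\ell,k}(K)}) \subseteq \ker(\div\boldsymbol\div|_{\boldsymbol\Sigma_{\ell,k}(K)}) = \sym\boldsymbol\curl\,\boldsymbol V_{\ell+1}(K)$ by exactness of \eqref{eq:divdivcomplexPolyvar}, and any preimage $\boldsymbol v\in\boldsymbol V_{\ell+1}(K)$ can be adjusted by an element of $\boldsymbol{RT}$ so that it lands in $\mathring{\boldsymbol V}_{\ell+1}(K)$ — here one checks that the (already vanishing) degrees of freedom \eqref{Hdivdivfemdof1}--\eqref{Hdivdivfemdof3} on $\boldsymbol\tau = \sym\boldsymbol\curl\,\boldsymbol v$ force the Hermite boundary data \eqref{HermitfemVdof1}--\eqref{HermitfemVdof2} of $\boldsymbol v$ to be recoverable from $\boldsymbol{RT}$, i.e. the residual boundary degrees of freedom lie in the one-dimensional-per-component affine slots spanned by $\boldsymbol{RT}$. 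Finally, surjectivity of $\div\boldsymbol\div$ onto $\mathring{\mathbb P}_{k-2}(K)$ and the rank–nullity bookkeeping are closed simultaneously by the dimension identity $\dim\mathring{\boldsymbol\Sigma}_{\ell,k}(K) = \dim\mathring{\boldsymbol V}_{\ell+1}(K) + \dim\mathring{\mathbb P}_{k-2}(K)$, which is obtained by subtracting the number of boundary degrees of freedom \eqref{Hdivdivfemdof1}--\eqref{Hdivdivfemdof3} (resp. \eqref{HermitfemVdof1}--\eqref{HermitfemVdof2}) from the corresponding full-space dimensions, using the exactness count for \eqref{eq:divdivcomplexPolyvar}; concretely, the boundary degrees of freedom of $\boldsymbol\Sigma_{\ell,k}(K)$ number one fewer per ``vertex+edge'' slot than those of $\boldsymbol V_{\ell+1}(K)$ in a way that accounts exactly for the three-dimensional $\boldsymbol{RT}$ and the drop $\dim\mathbb P_{k-2}(K)-\dim\mathring{\mathbb P}_{k-2}(K) = 3$.

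The main obstacle is the bookkeeping in \eqref{eq:divdivcomplexPolyvar0}: one must show that the ``interior'' degrees of freedom \eqref{Hdivdivfemdof4} on $\boldsymbol\Sigma_{\ell,k}(K)$ are precisely matched (via $\sym\boldsymbol\curl$ and $\div\boldsymbol\div$) with the interior degrees of freedom \eqref{HermitfemVdof3} on $\boldsymbol V_{\ell+1}(K)$ and the basis of $\mathring{\mathbb P}_{k-2}(K)$, and that the boundary degrees of freedom exactly peel off the $\boldsymbol{RT}$ and $\mathbb P_1$ kernels on the two ends. This is the analogue of the classical fact that a finite-element subcomplex inherits exactness from the polynomial complex provided the degrees of freedom are ``complex-compatible''; the cleanest route is to do the well-definedness of the arrows as above (all of which are one-line consequences of \eqref{eq:greenidentitydivdiv} and Lemma~\ref{lm:tauv}) and then let the dimension count do the rest, exactly as in the proof of Lemma~\ref{lem:unisovlenHdivdivfem}, rather than chasing elements by hand.
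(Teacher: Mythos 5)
Your proposal is essentially the same approach the paper takes. For the full complex \eqref{eq:divdivcomplexPolyvar} you decompose $\boldsymbol\Sigma_{\ell,k}(K)=\mathbb C_\ell(K;\mathbb S)\oplus\mathbb C_k^\oplus(K;\mathbb S)$ and use Lemma~\ref{lem:symmpolyspacedirectsum}; the paper states this in one line. For the bubble complex \eqref{eq:divdivcomplexPolyvar0} you correctly identify the three ingredients: (i) $\sym\boldsymbol\curl$ and $\div\boldsymbol\div$ map the ringed spaces into one another (via Lemma~\ref{lm:tauv} and Green's identity tested with $\mathbb P_1(K)$), (ii) exactness at the middle, which boils down to showing a preimage of any $\boldsymbol\tau\in\mathring{\boldsymbol\Sigma}_{\ell,k}(K)\cap\ker(\div\boldsymbol\div)$ can be chosen in $\mathring{\boldsymbol V}_{\ell+1}(K)$ after subtracting an appropriate $\boldsymbol{RT}$ field, and (iii) a dimension count for surjectivity. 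The paper handles (ii) by pointing at Step~3 of the unisolvence proof (Lemma~\ref{lem:unisovlenHdivdivfem}), which is exactly the ``$\boldsymbol{RT}$ adjustment'' you gesture at: fix the $\boldsymbol{RT}$ representative by $Q_0^e(\boldsymbol n^{\intercal}\boldsymbol v)=0$, then the vanishing traces of $\boldsymbol\tau$ force $\boldsymbol n^{\intercal}\boldsymbol v|_e$ to be constant, hence zero, then $\boldsymbol v(\delta)=0$, then $\boldsymbol t^{\intercal}\boldsymbol v|_e$ affine and vanishing at endpoints, hence zero, so $\boldsymbol v|_{\partial K}=0$. Your plan would be more convincing with this mechanism spelled out rather than the vaguer claim about ``recoverable from $\boldsymbol{RT}$.''

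One bookkeeping claim in your last paragraph is incorrect: the boundary degrees of freedom \eqref{Hdivdivfemdof1}--\eqref{Hdivdivfemdof3} of $\boldsymbol\Sigma_{\ell,k}(K)$ and the boundary degrees of freedom \eqref{HermitfemVdof1}--\eqref{HermitfemVdof2} of $\boldsymbol V_{\ell+1}(K)$ have exactly the \emph{same} count, namely $6\ell+6$, not ``one fewer per vertex+edge slot.'' What actually happens is that $\dim\boldsymbol{RT}=3$ and $\dim\mathbb P_{k-2}(K)-\dim\mathring{\mathbb P}_{k-2}(K)=3$ cancel in the Euler characteristic, which is why removing the boundary DOFs from both $\boldsymbol V_{\ell+1}(K)$ and $\boldsymbol\Sigma_{\ell,k}(K)$ removes the same number of dimensions and your final identity $\dim\mathring{\boldsymbol\Sigma}_{\ell,k}(K)=\dim\mathring{\boldsymbol V}_{\ell+1}(K)+\dim\mathring{\mathbb P}_{k-2}(K)$ still comes out right (both sides equal $\tfrac12 k(k-1)-3+\ell(\ell-1)$). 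So the conclusion is unaffected, but the intermediate heuristic is off.
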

\begin{proof}
The exactness of the complex \eqref{eq:divdivcomplexPolyvar} follows from the exactness of the complex~\eqref{eq:divdivcomplexPoly} and Lemma~\ref{lem:symmpolyspacedirectsum}.

By the proof of Lemma~\ref{lem:unisovlenHdivdivfem}, we have $\mathring{\boldsymbol \Sigma}_{\ell,k}(K)\cap\ker(\div\boldsymbol{\div})=\sym\boldsymbol \curl\mathring{\boldsymbol V}_{\ell+1}(K)$. This also means
\[
\dim\div\boldsymbol{\div}\mathring{\boldsymbol \Sigma}_{\ell,k}(K)=\dim\mathring{\boldsymbol \Sigma}_{\ell,k}(K)-\dim\mathring{\boldsymbol V}_{\ell+1}(K)=\frac{1}{2}k(k-1)-3=\dim\mathring{\mathbb P}_{k-2}(K).
\]
Due to the Green's identity \eqref{eq:greenidentitydivdiv}, we get $\div\boldsymbol{\div}\mathring{\boldsymbol \Sigma}_{\ell,k}(K)\subseteq \mathring{\mathbb P}_{k-2}(K)$, which ends the proof.
\end{proof}

To show the commutative diagram for the polynomial complex \eqref{eq:divdivcomplexPolyvar}, we introduce $\boldsymbol \Pi_K: \boldsymbol{H}^2(K; \mathbb{S})\to\boldsymbol \Sigma_{\ell,k}(K)$ be the nodal interpolation operator based on the degrees of freedom \eqref{Hdivdivfemdof1}-\eqref{Hdivdivfemdof4}. We have $\boldsymbol \Pi_K\boldsymbol \tau=\boldsymbol \tau$ for any $\boldsymbol \tau\in\mathbb P_{\min\{\ell,k\}}(K;\mathbb S)$, and
\begin{equation}\label{eq:PiKestimate}
\|\boldsymbol \tau-\boldsymbol \Pi_K\boldsymbol \tau\|_{0,K}+h_K|\boldsymbol \tau-\boldsymbol \Pi_K\boldsymbol \tau|_{1,K}+h_K^2|\boldsymbol \tau-\boldsymbol \Pi_K\boldsymbol \tau|_{2,K}\lesssim h_K^{s}|\boldsymbol \tau|_{s, K}
\end{equation}
for any $\boldsymbol \tau\in\boldsymbol{H}^s(K; \mathbb{S})$
with $2\leq s\leq \min\{\ell,k\}+1$. It follows from the Green's identity \eqref{eq:greenidentitydivdiv} that
\begin{equation}\label{eq:PiKcd}
\div\boldsymbol \div(\boldsymbol \Pi_K\boldsymbol \tau)=Q_{k-2}^K\div\boldsymbol \div\boldsymbol \tau\quad\forall~\boldsymbol \tau\in\boldsymbol{H}^2(K; \mathbb{S}).
\end{equation}
Let $\widetilde{\boldsymbol  I}_K: \boldsymbol{H}^3(K; \mathbb{R}^2)\to\boldsymbol V_{\ell+1}(K)$
be the nodal interpolation operator based on the degrees of freedom \eqref{HermitfemVdof1}-\eqref{HermitfemVdof3}.
We have $\widetilde{\boldsymbol  I}_K\boldsymbol  q=\boldsymbol  q$ for any $\boldsymbol  q\in\mathbb P_{\ell+1}(K;\mathbb R^2)$, and
\begin{equation}\label{eq:IKtildeestimate}
\|\boldsymbol  v-\widetilde{\boldsymbol  I}_K\boldsymbol  v\|_{0,K}+h_K|\boldsymbol  v-\widetilde{\boldsymbol  I}_K\boldsymbol  v|_{1,K} \lesssim h_K^{s}|\boldsymbol  v|_{s, K} \quad\forall~\boldsymbol  v\in \boldsymbol{H}^s(K; \mathbb{R}^2)
\end{equation}
with $3\leq s\leq \ell+2$.
Then we define $\boldsymbol  I_K: \boldsymbol{H}^3(K; \mathbb{R}^2)\to\boldsymbol V_{\ell+1}(K)$ by modifying $\widetilde{\boldsymbol  I}_K$.
By \eqref{eq:trace1} and \eqref{eq:trace2},
clearly we have $\boldsymbol \Pi_K(\sym\boldsymbol \curl \, \boldsymbol  v) - \sym\boldsymbol \curl(\widetilde{\boldsymbol  I}_K\boldsymbol  v)\in\mathring{\boldsymbol \Sigma}_{\ell,k}(K)$ for any $\boldsymbol  v\in\boldsymbol{H}^3(K; \mathbb{R}^2)$. And it holds from \eqref{eq:PiKcd} that
\[
\div\boldsymbol \div(\boldsymbol \Pi_K(\sym\boldsymbol \curl \, \boldsymbol  v) - \sym\boldsymbol \curl(\widetilde{\boldsymbol  I}_K\boldsymbol  v))=0.
\]
Thus using the complex \eqref{eq:divdivcomplexPolyvar0}, there exists $\widetilde{\boldsymbol  v}\in\mathring{\boldsymbol V}_{\ell+1}(K)$ satisfying
\[
\sym\boldsymbol\curl\widetilde{\boldsymbol  v}=\boldsymbol \Pi_K(\sym\boldsymbol \curl \, \boldsymbol  v) - \sym\boldsymbol \curl(\widetilde{\boldsymbol  I}_K\boldsymbol  v),
\]
\[
 \|\widetilde{\boldsymbol v}\|_{0,K}\lesssim h_K\|\boldsymbol \Pi_K(\sym\boldsymbol \curl \, \boldsymbol  v) - \sym\boldsymbol \curl(\widetilde{\boldsymbol  I}_K\boldsymbol  v)\|_{0,K}.
\]
Let $\boldsymbol  I_K\boldsymbol  v:=\widetilde{\boldsymbol  I}_K\boldsymbol  v+\widetilde{\boldsymbol  v}$. Apparently $\boldsymbol  I_K\boldsymbol  q=\boldsymbol  q$ for any $\boldsymbol  q\in\mathbb P_{\ell+1}(K;\mathbb R^2)$, and
\begin{equation}\label{eq:IKcd}
\sym\boldsymbol\curl(\boldsymbol  I_K\boldsymbol  v)=\boldsymbol \Pi_K(\sym\boldsymbol \curl \, \boldsymbol  v) \quad\forall~\boldsymbol  v\in\boldsymbol{H}^3(K; \mathbb{R}^2).
\end{equation}
It follows from  \eqref{eq:IKtildeestimate} and \eqref{eq:PiKestimate} that
\begin{equation}\label{eq:IKestimate}
\|\boldsymbol  v-\boldsymbol  I_K\boldsymbol  v\|_{0,K}+h_K|\boldsymbol  v-\boldsymbol  I_K\boldsymbol  v|_{1,K} \lesssim h_K^{s}|\boldsymbol  v|_{s, K} \quad\forall~\boldsymbol  v\in \boldsymbol{H}^s(K; \mathbb{R}^2).
\end{equation}
with $3\leq s\leq \ell+2$.

In summary, we have the following commutative diagram for the local finite element complex \eqref{eq:divdivcomplexPolyvar}
$$
\begin{array}{c}
\xymatrix{
  \boldsymbol{RT} \ar[r]^-{\subset} & \boldsymbol  H^3(K;\mathbb R^2) \ar[d]^{\boldsymbol{I}_K} \ar[r]^-{\sym\boldsymbol \curl}
                & \boldsymbol{H}^2(K; \mathbb{S}) \ar[d]^{\boldsymbol{\Pi}_K}   \ar[r]^-{\div\boldsymbol{\div}} & \ar[d]^{Q_K}L^2(K) \ar[r]^{} & 0 \\
 \boldsymbol{RT} \ar[r]^-{\subset} & \boldsymbol V_{\ell+1}(K) \ar[r]^{\sym\boldsymbol \curl}
                &  \boldsymbol \Sigma_{\ell,k}(K)   \ar[r]^{\div\boldsymbol{\div}} &  \mathbb P_{k-2}(K) \ar[r]^{}& 0    }
\end{array}
$$
with $Q_K:=Q_{k-2}^K$.

We then glue local finite element spaces to get global conforming spaces.
Define
\begin{align*}
\boldsymbol  V_h:=\{\boldsymbol  v_h\in \boldsymbol  H^1(\Omega;\mathbb R^2):&\, \boldsymbol  v_h|_K\in \mathbb P_{\ell+1}(K;\mathbb R^2) \textrm{ for each } K\in\mathcal T_h, \\
& \textrm{ all the degrees of freedom } \eqref{HermitfemVdof1}-\eqref{HermitfemVdof2}  \textrm{ are single-valued}\},
\end{align*}
\begin{align*}
\boldsymbol\Sigma_h:=\{\boldsymbol \tau_h\in \boldsymbol  L^2(\Omega;\mathbb S):&\, \boldsymbol \tau_h|_K\in \boldsymbol \Sigma_{\ell,k}(K) \textrm{ for each } K\in\mathcal T_h, \\
& \textrm{ all the degrees of freedom } \eqref{Hdivdivfemdof1}-\eqref{Hdivdivfemdof3} \textrm{ are single-valued}\},
\end{align*}
\[
\mathcal Q_h :=\mathbb P_{k-2}(\mathcal T_h)=\{q_h\in L^2(\Omega): q_h|_K\in \mathbb P_{k-2}(K) \textrm{ for each } K\in\mathcal T_h\}.
\]
Due to Lemma~\ref{lem:Hdivdivpatching}, the finite element space $\boldsymbol \Sigma_h\subset\boldsymbol{H}(\div \boldsymbol{\div },\Omega; \mathbb{S})$.
Let $\boldsymbol  I_h: \boldsymbol{H}^3(\Omega; \mathbb{S})\to\boldsymbol V_h$, $\boldsymbol \Pi_h: \boldsymbol{H}^2(\Omega; \mathbb{S})\to\boldsymbol\Sigma_h$ and $Q_h^{m}: L^2(\Omega)\to\mathbb P_{m}(\mathcal T_h)$ be defined by $(\boldsymbol  I_h\boldsymbol  v)|_K:=\boldsymbol  I_K(\boldsymbol  v|_K)$, $(\boldsymbol \Pi_h\boldsymbol \tau)|_K:=\boldsymbol \Pi_K(\boldsymbol \tau|_K)$  and $(Q_h^{m} q)|_K:=Q_{m}^K(q|_K)$ for each $K\in\mathcal T_h$, respectively. When the degree is clear from the context, we will simply write the $L^2$-projection $Q_h^{k-2}$ as $Q_h$.

As direct results of \eqref{eq:PiKcd} and \eqref{eq:IKcd}, we have
\begin{equation}\label{eq:Pihcd}
\div\boldsymbol \div(\boldsymbol \Pi_h\boldsymbol \tau)=Q_{h}\div\boldsymbol \div\boldsymbol \tau\quad\forall~\boldsymbol \tau\in\boldsymbol{H}^2(\Omega; \mathbb{S}),
\end{equation}
\begin{equation}\label{eq:Ihcd}
\sym\boldsymbol\curl(\boldsymbol  I_h\boldsymbol  v)=\boldsymbol \Pi_h(\sym\boldsymbol \curl \, \boldsymbol  v) \quad\forall~\boldsymbol  v\in\boldsymbol{H}^3(\Omega; \mathbb{R}^2).
\end{equation}

\begin{lemma}
The finite element complex
\begin{equation}\label{eq:divdivcomplexfem}
\boldsymbol  {RT}\autorightarrow{$\subset$}{} \boldsymbol  V_h\autorightarrow{$\sym\boldsymbol \curl$}{} \boldsymbol \Sigma_h \autorightarrow{$\div\boldsymbol{\div}$}{} \mathcal Q_h\autorightarrow{}{}0
\end{equation}
is exact. Moreover, we have the commutative diagram
\begin{equation}\label{eq:divdivcdfem}
\begin{array}{c}
\xymatrix{
  \boldsymbol{RT} \ar[r]^-{\subset} & \boldsymbol  H^3(\Omega;\mathbb R^2) \ar[d]^{\boldsymbol{I}_h} \ar[r]^-{\sym\boldsymbol \curl}
                & \boldsymbol{H}^2(\Omega; \mathbb{S}) \ar[d]^{\boldsymbol{\Pi}_h}   \ar[r]^-{\div\boldsymbol{\div}} & \ar[d]^{Q_{h}}L^2(\Omega) \ar[r]^{} & 0 \\
 \boldsymbol{RT} \ar[r]^-{\subset} & \boldsymbol V_{h} \ar[r]^{\sym\boldsymbol \curl}
                &  \boldsymbol \Sigma_{h}   \ar[r]^{\div\boldsymbol{\div}} &  \mathcal Q_h \ar[r]^{}& 0    }
\end{array}.
\end{equation}
\end{lemma}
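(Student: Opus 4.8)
The plan is to read off the commutative diagram \eqref{eq:divdivcdfem} from results already in hand, confirm that \eqref{eq:divdivcomplexfem} is a complex, and then prove exactness at $\boldsymbol V_h$, $\mathcal Q_h$ and $\boldsymbol\Sigma_h$ in turn, using the exactness of the continuous complexes \eqref{eq:divdivcomplexL2}--\eqref{eq:divdivcomplexH2}, the element-level complexes \eqref{eq:divdivcomplexPolyvar}--\eqref{eq:divdivcomplexPolyvar0}, and the patching Lemma~\ref{lem:Hdivdivpatching}. The diagram \eqref{eq:divdivcdfem} commutes because $\boldsymbol I_K\boldsymbol q=\boldsymbol q$ for $\boldsymbol q\in\boldsymbol{RT}$ (left square) and because the middle and right squares are precisely \eqref{eq:Ihcd} and \eqref{eq:Pihcd}. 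To see that \eqref{eq:divdivcomplexfem} is a complex, take $\boldsymbol v_h\in\boldsymbol V_h$: on each $K$ we have $\sym\boldsymbol\curl(\boldsymbol v_h|_K)\in\sym\boldsymbol\curl\,\mathbb P_{\ell+1}(K;\mathbb R^2)\subseteq\boldsymbol\Sigma_{\ell,k}(K)$ by \eqref{eq:divdivcomplexPolyvar}, while by \eqref{eq:trace1}--\eqref{eq:trace2} the edge functionals \eqref{Hdivdivfemdof2}--\eqref{Hdivdivfemdof3} applied to $\sym\boldsymbol\curl\boldsymbol v_h$ reduce to $\partial_t(\boldsymbol n^{\intercal}\boldsymbol v_h)$ and $\partial_{tt}(\boldsymbol t^{\intercal}\boldsymbol v_h)$ along $e$, and the vertex value \eqref{Hdivdivfemdof1} is a linear function of $\nabla\boldsymbol v_h(\delta)$ — all single-valued since $\boldsymbol v_h\in\boldsymbol H^1(\Omega;\mathbb R^2)$ and the degrees of freedom \eqref{HermitfemVdof1} are single-valued. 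Thus $\sym\boldsymbol\curl\boldsymbol V_h\subseteq\boldsymbol\Sigma_h$; by Lemma~\ref{lem:Hdivdivpatching}, $\boldsymbol\Sigma_h\subset\boldsymbol{H}(\div\boldsymbol\div,\Omega;\mathbb S)$ with $\div\boldsymbol\div\boldsymbol\Sigma_h\subseteq\mathbb P_{k-2}(\mathcal T_h)=\mathcal Q_h$; and $\boldsymbol{RT}\subseteq\ker(\sym\boldsymbol\curl)$, $\div\boldsymbol\div\,\sym\boldsymbol\curl=0$ from \eqref{eq:divdivcomplexPoly}.

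For exactness at $\boldsymbol V_h$: if $\boldsymbol v_h\in\boldsymbol V_h$ with $\sym\boldsymbol\curl\boldsymbol v_h=0$, then $\boldsymbol v_h\in\ker(\sym\boldsymbol\curl)\cap\boldsymbol H^1(\Omega;\mathbb R^2)=\boldsymbol{RT}$ by the exactness of \eqref{eq:divdivcomplexL2}. For exactness at $\mathcal Q_h$: given $q_h\in\mathcal Q_h\subseteq L^2(\Omega)$, the exactness of \eqref{eq:divdivcomplexH2} on $\Omega$ provides $\boldsymbol\tau\in\boldsymbol H^2(\Omega;\mathbb S)$ with $\div\boldsymbol\div\boldsymbol\tau=q_h$, and then $\boldsymbol\Pi_h\boldsymbol\tau\in\boldsymbol\Sigma_h$ satisfies $\div\boldsymbol\div(\boldsymbol\Pi_h\boldsymbol\tau)=Q_h q_h=q_h$ by \eqref{eq:Pihcd}.

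The heart of the matter is exactness at $\boldsymbol\Sigma_h$. Let $\boldsymbol\tau_h\in\boldsymbol\Sigma_h$ with $\div\boldsymbol\div\boldsymbol\tau_h=0$. By Lemma~\ref{lem:Hdivdivpatching}, $\boldsymbol\tau_h\in\boldsymbol{H}(\div\boldsymbol\div,\Omega;\mathbb S)$, so the exactness of \eqref{eq:divdivcomplexL2} yields $\boldsymbol v\in\boldsymbol H^1(\Omega;\mathbb R^2)$ with $\sym\boldsymbol\curl\boldsymbol v=\boldsymbol\tau_h$. On each $K$, picking a polynomial potential $\boldsymbol w_K\in\mathbb P_{\ell+1}(K;\mathbb R^2)$ with $\sym\boldsymbol\curl\boldsymbol w_K=\boldsymbol\tau_h|_K$ (available from \eqref{eq:divdivcomplexPolyvar}) shows $\boldsymbol v|_K-\boldsymbol w_K\in\ker(\sym\boldsymbol\curl)=\boldsymbol{RT}$ on $K$, hence $\boldsymbol v|_K\in\mathbb P_{\ell+1}(K;\mathbb R^2)$; combined with $\boldsymbol v\in C^0(\overline\Omega)$, every functional in \eqref{HermitfemVdof1}--\eqref{HermitfemVdof2} is single-valued except possibly $\nabla\boldsymbol v(\delta)$ at the vertices. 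Here one uses that $\sym\boldsymbol\curl\boldsymbol v(\delta)=\boldsymbol\tau_h(\delta)$ is single-valued and that the pointwise linear map sending $\nabla\boldsymbol v$ to $\sym\boldsymbol\curl\boldsymbol v$ has kernel $\mathbb R\,\boldsymbol I$; therefore the matrices $\nabla\boldsymbol v|_K(\delta)$ over the elements $K$ meeting at $\delta$ differ only by scalar multiples of $\boldsymbol I$, and since $\boldsymbol v$ is continuous across each edge through $\delta$ its gradients applied to that edge's tangent agree, which forces those scalars to be equal around $\delta$. Hence $\nabla\boldsymbol v(\delta)$ is single-valued, $\boldsymbol v\in\boldsymbol V_h$, and $\boldsymbol\tau_h=\sym\boldsymbol\curl\boldsymbol v\in\sym\boldsymbol\curl\boldsymbol V_h$, which completes the proof of exactness.

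The main obstacle is exactly this lifting of $\boldsymbol\tau_h$ to a \emph{conforming} Hermite-type potential: the stress element controls only three of the four entries of $\nabla\boldsymbol v$ at a vertex, and the missing (trace) entry has to be recovered by the vertex-by-vertex compatibility argument above, where the vertex continuity built into Lemma~\ref{lem:Hdivdivpatching} and the $C^0$ continuity of $\boldsymbol v$ along edges are both indispensable. As a consistency check — and an alternative route to exactness at $\boldsymbol\Sigma_h$ — one may count degrees of freedom and use Euler's relation $|\mathcal V_h|-|\mathcal E_h|+|\mathcal T_h|=1$ to verify $\dim\boldsymbol{RT}-\dim\boldsymbol V_h+\dim\boldsymbol\Sigma_h-\dim\mathcal Q_h=0$, which together with the exactness established at the three other slots forces exactness at $\boldsymbol\Sigma_h$ as well.
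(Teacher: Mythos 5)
Your proof is correct, and at the decisive step it takes a genuinely different route from the paper. The paper proves exactness at $\mathcal{Q}_h$ exactly as you do (via \eqref{eq:divdivcomplexH2} and \eqref{eq:Pihcd}), but then handles exactness at $\boldsymbol{\Sigma}_h$ purely by a dimension count: it tabulates $\dim\boldsymbol{\Sigma}_h$, $\dim\sym\boldsymbol{\curl}\,\boldsymbol{V}_h$ and $\dim\div\boldsymbol{\div}\,\boldsymbol{\Sigma}_h$ in terms of $\#\mathcal{V}_h$, $\#\mathcal{E}_h$, $\#\mathcal{T}_h$, and invokes Euler's relation $\#\mathcal{E}_h+1=\#\mathcal{V}_h+\#\mathcal{T}_h$ to conclude $\dim\boldsymbol{\Sigma}_h=\dim\sym\boldsymbol{\curl}\,\boldsymbol{V}_h+\dim\div\boldsymbol{\div}\,\boldsymbol{\Sigma}_h$, forcing the kernel to equal the image (with exactness at $\boldsymbol{V}_h$ used implicitly to evaluate $\dim\sym\boldsymbol{\curl}\,\boldsymbol{V}_h$). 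You instead construct a conforming potential directly: you observe that the kernel of the pointwise linear map $\nabla\boldsymbol{v}\mapsto\sym\boldsymbol{\curl}\,\boldsymbol{v}$ is $\mathbb{R}\,\boldsymbol{I}$, so that around a vertex the piecewise gradients of a continuous $\boldsymbol{H}^1$ potential with single-valued $\sym\boldsymbol{\curl}$ can only jump by multiples of $\boldsymbol{I}$, and then use tangential continuity across each edge to annihilate those multiples. This gives a constructive, mesh-topology-free explanation for exactness at $\boldsymbol{\Sigma}_h$, whereas the paper's count is shorter and sidesteps the vertex-compatibility analysis but reveals less structure; your closing remark correctly identifies the dimension count as the consistency check coinciding with the paper's argument. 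One small point worth stating explicitly in your lifting argument: the vertex-compatibility conclusion requires the link of each vertex $\delta$ to be edge-connected (true for a triangulation of a polygon, both for interior and boundary vertices), so that the chain of edge-by-edge comparisons does propagate around all elements meeting $\delta$.
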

\begin{proof}
By the complex \eqref{eq:divdivcomplexH2}, for any $q_h\in \mathcal Q_h$, there exists $\boldsymbol \tau \in\boldsymbol{H}^2(\Omega; \mathbb{S})$ satisfying $\div\boldsymbol \div\boldsymbol \tau=q_h$. Then it follows from \eqref{eq:Pihcd} that
\[
\div\boldsymbol \div(\boldsymbol  \Pi_h\boldsymbol \tau)=Q_h\div\boldsymbol \div\boldsymbol \tau=q_h.
\]
Hence $\div\boldsymbol{\div}\boldsymbol \Sigma_h = \mathcal Q_h$.
On the other hand, by counting we get
\[
\dim\boldsymbol \Sigma_h=3\#\mathcal V_h+(2\ell-1)\#\mathcal E_h+\ell(\ell-1)\#\mathcal T_h+\frac{1}{2}(k+2)(k-3)\#\mathcal T_h,
\]
\[
\dim\sym\boldsymbol \curl \, \boldsymbol  V_h=6\#\mathcal V_h+(2\ell-4)\#\mathcal E_h+\ell(\ell-1)\#\mathcal T_h-3,
\]
\[
\dim\div\boldsymbol{\div}\boldsymbol \Sigma_h=\dim\mathbb P_{k-2}(\mathcal T_h)=\frac{1}{2}k(k-1)\#\mathcal T_h.
\]
Here $\#\mathcal S$ means the number of the elements in the finite set $\mathcal S$.
It follows from the Euler's formula $\#\mathcal E_h+1=\#\mathcal V_h+\#\mathcal T_h$ that
\[
\dim\boldsymbol \Sigma_h=\dim\sym\boldsymbol \curl \, \boldsymbol  V_h+\dim\div\boldsymbol{\div}\boldsymbol \Sigma_h.
\]
Therefore the complex \eqref{eq:divdivcomplexfem} is exact.

The commutative diagram \eqref{eq:divdivcdfem} follows from \eqref{eq:Pihcd} and \eqref{eq:Ihcd}.
\end{proof}

\begin{remark}\rm
Using the smoothing procedure \cite{ArnoldFalkWinther2006}, the natural interpolations in the commutative diagram \eqref{eq:divdivcdfem} can be refined to quasi-interpolations and the top one can be replaced by the complex \eqref{eq:divdivcomplexL2} with minimal regularity.
\end{remark}

\subsection{Conforming finite element spaces for strain complex}
In the application of linear elasticity, the strain complex is more relevant. 
As the rotated version of \eqref{eq:greenidentitydivdiv}, we get the Green's identity
\begin{align}
(\rot\boldsymbol \rot\boldsymbol \tau, v)_K&=(\boldsymbol \tau, \boldsymbol \curl\curl v)_K+\sum_{e\in\mathcal E(K)}\sum_{\delta\in\partial e}\sign_{e,\delta}(\boldsymbol  n^{\intercal}\boldsymbol \tau\boldsymbol  t)(\delta)v(\delta) \notag\\
&\quad-\sum_{e\in\mathcal E(K)}\left[(\boldsymbol  t^{\intercal}\boldsymbol \tau \boldsymbol  t, \partial_n v)_{e}+(\partial_{t}(\boldsymbol  n^{\intercal}\boldsymbol \tau\boldsymbol  t)-\boldsymbol  t^{\intercal}\boldsymbol \rot\boldsymbol \tau,  v)_{e}\right] \label{eq:greenidentityrotrot}
\end{align}
for any $\boldsymbol \tau\in \mathcal C^2(K; \mathbb S)$ and $v\in H^2(K)$.

As a result, we have the following characterization of $\boldsymbol{H}(\rot \boldsymbol{\rot },\Omega; \mathbb{S})$.
\begin{lemma}\label{lem:Hrotrotpatching}
Let $\boldsymbol \tau\in \boldsymbol  L^2(\Omega;\mathbb S)$ such that
\begin{enumerate}[(i)]
\item $\boldsymbol \tau|_K\in \boldsymbol{H}(\rot \boldsymbol{\rot },K; \mathbb{S})$ for each $K\in\mathcal T_h$;

\smallskip
\item $(\boldsymbol  t^{\intercal}\boldsymbol \tau\boldsymbol  t)|_e\in L^2(e)$ is single-valued for each $e\in\mathcal E_h^i$;

\smallskip
\item $(-\partial_{t_e}(\boldsymbol  n^{\intercal}\boldsymbol \tau\boldsymbol  t)+\boldsymbol  t_e^{\intercal}\boldsymbol \rot\boldsymbol \tau)|_e\in L^2(e)$ is single-valued for each $e\in\mathcal E_h^i$;

\smallskip
\item $\boldsymbol \tau(\delta)$ is single-valued for each $\delta\in\mathcal V_h^i$,
\end{enumerate}
then $\boldsymbol \tau\in \boldsymbol{H}(\rot \boldsymbol{\rot },\Omega; \mathbb{S})$.
\end{lemma}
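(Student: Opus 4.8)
The plan is to mirror the proof of Lemma~\ref{lem:Hdivdivpatching} exactly, replacing the div-div Green's identity \eqref{eq:greenidentitydivdiv} by its rotated counterpart \eqref{eq:greenidentityrotrot}. The goal is to show that the distribution $\rot\boldsymbol\rot\boldsymbol\tau$, defined by $\langle\rot\boldsymbol\rot\boldsymbol\tau,v\rangle=(\boldsymbol\tau,\boldsymbol\curl\curl v)$ for $v\in C_0^\infty(\Omega)$, coincides with the $L^2$ function $\sum_{K}(\rot\boldsymbol\rot\boldsymbol\tau)|_K$; once this is established, hypothesis (i) guarantees the right-hand side is in $L^2(\Omega)$, so $\boldsymbol\tau\in\boldsymbol{H}(\rot\boldsymbol\rot,\Omega;\mathbb S)$.

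First I would fix $v\in C_0^\infty(\Omega)$ and apply \eqref{eq:greenidentityrotrot} on each $K\in\mathcal T_h$, then sum over $K$. Since $v$ and its normal derivative vanish on $\partial\Omega$, only interior edges and interior vertices survive, giving
\begin{align*}
(\boldsymbol\tau,\boldsymbol\curl\curl v)&=\sum_{K\in\mathcal T_h}(\rot\boldsymbol\rot\boldsymbol\tau,v)_K+\sum_{K\in\mathcal T_h}\sum_{e\in\mathcal E^i(K)}\sum_{\delta\in\partial e\cap\Omega}\sign_{e,\delta}(\boldsymbol n^{\intercal}\boldsymbol\tau\boldsymbol t)(\delta)v(\delta)\\
&\quad-\sum_{K\in\mathcal T_h}\sum_{e\in\mathcal E^i(K)}\left[(\boldsymbol t^{\intercal}\boldsymbol\tau\boldsymbol t,\partial_n v)_e+(\partial_t(\boldsymbol n^{\intercal}\boldsymbol\tau\boldsymbol t)-\boldsymbol t^{\intercal}\boldsymbol\rot\boldsymbol\tau,v)_e\right].
\end{align*}
Next I would argue that every interior edge $e$ appears exactly twice in the sum, once from each adjacent triangle, and that for the two incident elements the unit normal $\boldsymbol n$ has opposite sign while $\boldsymbol t$ flips accordingly; combining hypotheses (ii) and (iii) — single-valuedness of $\boldsymbol t^{\intercal}\boldsymbol\tau\boldsymbol t$ and of $-\partial_{t_e}(\boldsymbol n^{\intercal}\boldsymbol\tau\boldsymbol t)+\boldsymbol t_e^{\intercal}\boldsymbol\rot\boldsymbol\tau$ — the edge integrals cancel pairwise. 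Here one must track the orientation bookkeeping carefully: $\boldsymbol t^{\intercal}\boldsymbol\tau\boldsymbol t$ is insensitive to the choice of normal direction (it is even in $\boldsymbol n$), whereas the term $(\partial_t(\boldsymbol n^{\intercal}\boldsymbol\tau\boldsymbol t)-\boldsymbol t^{\intercal}\boldsymbol\rot\boldsymbol\tau,v)_e$ changes sign consistently with the reversal of the edge parametrization and the flip $\boldsymbol n\mapsto-\boldsymbol n$, $\boldsymbol t\mapsto-\boldsymbol t$, so the two contributions indeed have opposite signs and cancel. Similarly, each interior vertex $\delta$ is shared by several triangles and the $\sign_{e,\delta}$ factors sum to zero around $\delta$ (as in the proof of Lemma~\ref{lem:Hdivdivpatching}) provided $(\boldsymbol n^{\intercal}\boldsymbol\tau\boldsymbol t)(\delta)$ is single-valued, which follows from hypothesis (iv) together with single-valuedness of the frame at $\delta$.

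After these cancellations one is left with $(\boldsymbol\tau,\boldsymbol\curl\curl v)=\sum_{K}(\rot\boldsymbol\rot\boldsymbol\tau,v)_K$ for all $v\in C_0^\infty(\Omega)$, which is precisely the statement that $\rot\boldsymbol\rot\boldsymbol\tau\in L^2(\Omega)$ with the claimed piecewise representation, so $\boldsymbol\tau\in\boldsymbol{H}(\rot\boldsymbol\rot,\Omega;\mathbb S)$. The only real subtlety — and the step I would dwell on — is the sign/orientation analysis of the twisting-type term $\partial_t(\boldsymbol n^{\intercal}\boldsymbol\tau\boldsymbol t)-\boldsymbol t^{\intercal}\boldsymbol\rot\boldsymbol\tau$ across an interior edge: one should verify that under the two opposite choices of $(\boldsymbol n,\boldsymbol t)$ inherited from the two neighboring triangles, this quantity reverses sign (so that "single-valued with respect to a fixed $\boldsymbol n_e,\boldsymbol t_e$" forces the pair of edge integrals to be negatives of each other), rather than being invariant. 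This is the rotated analogue of the corresponding check in Lemma~\ref{lem:Hdivdivpatching} and is routine once the sign conventions are pinned down, but it is where an error would most easily creep in.
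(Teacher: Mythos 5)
Your proposal is correct and is exactly the argument the paper intends: the paper gives no explicit proof of Lemma~\ref{lem:Hrotrotpatching} but states it as an immediate consequence of the rotated Green's identity \eqref{eq:greenidentityrotrot}, just as Lemma~\ref{lem:Hdivdivpatching} was proved from \eqref{eq:greenidentitydivdiv}, and your element-by-element integration-by-parts followed by interior-edge and interior-vertex cancellation reproduces that argument, including the sign bookkeeping. The only loose phrase is ``single-valuedness of the frame at $\delta$'': there is no single frame at a vertex, but rather, for each interior edge $e$ through $\delta$, the value $(\boldsymbol n^{\intercal}\boldsymbol\tau\boldsymbol t)(\delta)$ is invariant under $(\boldsymbol n,\boldsymbol t)\mapsto(-\boldsymbol n,-\boldsymbol t)$ while $\sign_{e,\delta}$ flips between the two triangles sharing $e$, so the vertex contributions cancel edge-wise, which is the mechanism you clearly have in mind.
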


Take the space of shape functions
\[
\boldsymbol \Sigma_{\ell,k}^{\perp}(K):= \mathbb E_{\ell}(K;\mathbb S)\oplus\mathbb E_k^{\oplus}(K;\mathbb S)
\]
with $k\geq 3$ and $\ell\geq k-1$, where recall that the spaces $\mathbb E_{\ell}(K;\mathbb S), \mathbb E_k^{\oplus}(K;\mathbb S)$ are introduced in Remark \ref{rm:Ek}.
The local degrees of freedom are given by
\begin{align}
\boldsymbol \tau (\delta) & \quad\forall~\delta\in \mathcal V(K), \label{Hrotrotfemdof1}\\
(\boldsymbol  t^{\intercal}\boldsymbol \tau\boldsymbol  t, q)_e & \quad\forall~q\in\mathbb P_{\ell-2}(e),  e\in\mathcal E(K),\label{Hrotrotfemdof2}\\
(-\partial_{t}(\boldsymbol  n^{\intercal}\boldsymbol \tau\boldsymbol  t)+\boldsymbol  t^{\intercal}\boldsymbol \rot\boldsymbol \tau, q)_e & \quad\forall~q\in\mathbb P_{\ell-1}(e),  e\in\mathcal E(K),\label{Hrotrotfemdof3}\\
(\boldsymbol \tau, \boldsymbol \varsigma)_K & \quad\forall~\boldsymbol \varsigma\in\boldsymbol \curl\curl \, \mathbb P_{k-2}(K)\oplus
 \sym (\bs x\otimes \mathbb P_{\ell-2}(K;\mathbb R^2)).\label{Hrotrotfemdof4}
\end{align}
The degrees of freedom \eqref{Hrotrotfemdof1}-\eqref{Hrotrotfemdof4} are unisolvent for $\boldsymbol \Sigma_{\ell,k}^{\perp}(K)$.

Let
$
\mathbb A:=\begin{pmatrix}
0 & -1\\
1 & 0
\end{pmatrix}
$, then
\[
\boldsymbol  t=\mathbb A \boldsymbol  n,\quad \curl\phi=\mathbb A^{\intercal}\grad\phi,\quad \rot\boldsymbol  v=\div(\mathbb A^{\intercal} \boldsymbol  v), \quad \boldsymbol \rot\boldsymbol \tau=\mathbb A\boldsymbol \div(\mathbb A^{\intercal}\boldsymbol \tau\mathbb A),
\]
\[
 \rot\boldsymbol \rot\boldsymbol \tau=\div\boldsymbol \div(\mathbb A^{\intercal}\boldsymbol \tau\mathbb A),\;\; \boldsymbol \curl\curl v=\mathbb A^{\intercal}\nabla^2v\mathbb A,\;\; \boldsymbol \defm\boldsymbol  v=\mathbb A\sym\boldsymbol \curl(\mathbb A^{\intercal}\boldsymbol  v)\mathbb A^{\intercal},
\]
\[
-\partial_{t}(\boldsymbol  n^{\intercal}\boldsymbol \tau\boldsymbol  t)+\boldsymbol  t^{\intercal}\boldsymbol \rot\boldsymbol \tau=\partial_{t}(\boldsymbol  t^{\intercal}\mathbb A^{\intercal}\boldsymbol \tau\mathbb A\boldsymbol  n)+\boldsymbol  n^{\intercal}\boldsymbol \div(\mathbb A^{\intercal}\boldsymbol \tau\mathbb A)
\]
for sufficiently smooth scalar field $\phi$, vectorial field $\boldsymbol  v$ and tensorial field $\boldsymbol \tau$. Moreover, we have
\[
\boldsymbol  x^{\perp}=\mathbb A^{\intercal} \boldsymbol  x,\quad \mathbb E_{\ell}(K;\mathbb S)=\mathbb A^{\intercal}\mathbb C_{\ell}(K;\mathbb S)\mathbb A,\quad \mathbb E_k^{\oplus}(K;\mathbb S)=\mathbb A^{\intercal}\mathbb C_k^{\oplus}(K;\mathbb S)\mathbb A.
\]
Then the exactness of the complex \eqref{eq:divdivcomplexPolyvar} implies that the local finite element strain complex
\begin{equation}\label{eq:rotrotcomplexPolyvar}
\boldsymbol  {RM}\autorightarrow{$\subset$}{} \boldsymbol V_{\ell+1}(K)\autorightarrow{$\boldsymbol\defm$}{} \boldsymbol \Sigma_{\ell,k}^{\perp}(K) \autorightarrow{$\rot\boldsymbol{\rot}$}{} \mathbb P_{k-2}(K)\autorightarrow{}{}0
\end{equation}
is exact.

Define interpolation operators $\boldsymbol \Pi_K^{\perp}: \boldsymbol{H}^2(K; \mathbb{S})\to\boldsymbol \Sigma_{\ell,k}^{\perp}(K)$ and $\boldsymbol  I_K^{\perp}: \boldsymbol H^3(K;\mathbb R^2)\to\boldsymbol V_{\ell+1}(K)$ as
\[
\boldsymbol \Pi_K^{\perp}\boldsymbol \tau:=\mathbb A(\boldsymbol \Pi_K(\mathbb A^{\intercal}\boldsymbol \tau\mathbb A))\mathbb A^{\intercal}\quad\forall~\boldsymbol \tau\in\boldsymbol{H}^2(K; \mathbb{S}),
\]
\[
\boldsymbol  I_K^{\perp}\boldsymbol  v:=\mathbb A\boldsymbol  I_K(\mathbb A^{\intercal}\boldsymbol  v)\quad\forall~\boldsymbol  v\in \boldsymbol H^3(K;\mathbb R^2).
\]
It follows from \eqref{eq:PiKcd} and \eqref{eq:IKcd} that
\[
\rot\boldsymbol \rot\boldsymbol \Pi_K^{\perp}\boldsymbol \tau=\div\boldsymbol \div(\boldsymbol \Pi_K(\mathbb A^{\intercal}\boldsymbol \tau\mathbb A))=Q_{k-2}^K\div\boldsymbol \div(\mathbb A^{\intercal}\boldsymbol \tau\mathbb A)=Q_{k-2}^K\rot\boldsymbol \rot\boldsymbol \tau,
\]
\begin{align*}
\boldsymbol\defm(\boldsymbol  I_K^{\perp}\boldsymbol  v)&=\mathbb A\sym\curl(\mathbb A^{\intercal}\boldsymbol  I_K^{\perp}\boldsymbol  v)\mathbb A^{\intercal}=\mathbb A\sym\curl(\boldsymbol  I_K(\mathbb A^{\intercal}\boldsymbol  v))\mathbb A^{\intercal} \\
&=\mathbb A\boldsymbol \Pi_K(\sym\boldsymbol \curl(\mathbb A^{\intercal}\boldsymbol  v))\mathbb A^{\intercal} =\mathbb A\boldsymbol \Pi_K(\mathbb A^{\intercal}(\boldsymbol\defm\boldsymbol  v)\mathbb A)\mathbb A^{\intercal}=\boldsymbol \Pi_K^{\perp}(\boldsymbol\defm\boldsymbol  v).
\end{align*}
Therefore we have the following commutative diagram for the local finite element complex \eqref{eq:rotrotcomplexPolyvar}
$$
\begin{array}{c}
\xymatrix{
  \boldsymbol{RM} \ar[r]^-{\subset} & \boldsymbol  H^3(K;\mathbb R^2) \ar[d]^{\boldsymbol{I}_K^{\perp}} \ar[r]^-{\boldsymbol\defm}
                & \boldsymbol{H}^2(K; \mathbb{S}) \ar[d]^{\boldsymbol{\Pi}_K^{\perp}}   \ar[r]^-{\rot\boldsymbol{\rot}} & \ar[d]^{Q_K}L^2(K) \ar[r]^{} & 0 \\
 \boldsymbol{RM} \ar[r]^-{\subset} & \boldsymbol V_{\ell+1}(K) \ar[r]^{\boldsymbol\defm}
                &  \boldsymbol \Sigma_{\ell,k}^{\perp}(K)   \ar[r]^{\rot\boldsymbol{\rot}} &  \mathbb P_{k-2}(K) \ar[r]^{}& 0    }
\end{array}.
$$

Define
\begin{align*}
\boldsymbol\Sigma_h^{\perp}:=\{\boldsymbol \tau_h\in \boldsymbol  L^2(\Omega;\mathbb S):&\, \boldsymbol \tau_h|_K\in \boldsymbol \Sigma_{\ell,k}^{\perp}(K) \textrm{ for each } K\in\mathcal T_h, \textrm{ all the degrees of }\\
& \qquad\qquad\qquad\;\textrm{ freedom } \eqref{Hrotrotfemdof1}-\eqref{Hrotrotfemdof3} \textrm{ are single-valued}\}.
\end{align*}
Thanks to Lemma~\ref{lem:Hrotrotpatching}, the finite element space $\boldsymbol \Sigma_h^{\perp}\subset\boldsymbol{H}(\rot\boldsymbol{\rot},\Omega; \mathbb{S})$.
Let $\boldsymbol  I_h^{\perp}: \boldsymbol{H}^3(\Omega; \mathbb{S})\to\boldsymbol V_h$ and $\boldsymbol \Pi_h^{\perp}: \boldsymbol{H}^2(\Omega; \mathbb{S})\to\boldsymbol\Sigma_h^{\perp}$ be defined by $(\boldsymbol  I_h^{\perp}\boldsymbol  v)|_K:=\boldsymbol  I_K^{\perp}(\boldsymbol  v|_K)$ and $(\boldsymbol \Pi_h^{\perp}\boldsymbol \tau)|_K:=\boldsymbol \Pi_K^{\perp}(\boldsymbol \tau|_K)$, respectively.
Similarly as the commutative diagram \eqref{eq:divdivcdfem}, we have the commutative diagram
\begin{equation*}
\begin{array}{c}
\xymatrix{
  \boldsymbol{RM} \ar[r]^-{\subset} & \boldsymbol  H^3(\Omega;\mathbb R^2) \ar[d]^{\boldsymbol{I}_h^{\perp}} \ar[r]^-{\boldsymbol\defm}
                & \boldsymbol{H}^2(\Omega; \mathbb{S}) \ar[d]^{\boldsymbol{\Pi}_h^{\perp}}   \ar[r]^-{\rot\boldsymbol{\rot}} & \ar[d]^{Q_{h}}L^2(\Omega) \ar[r]^{} & 0 \\
 \boldsymbol{RM} \ar[r]^-{\subset} & \boldsymbol V_{h} \ar[r]^{\boldsymbol\defm}
                &  \boldsymbol \Sigma_{h}^{\perp}   \ar[r]^{\rot\boldsymbol{\rot}} &  \mathcal Q_h \ar[r]^{}& 0    }
\end{array}.
\end{equation*}

\section{Mixed finite element methods for biharmonic equation}

In this section we will apply the $\boldsymbol H(\div\boldsymbol\div)$-conforming finite element pair $(\boldsymbol\Sigma_h, \mathcal Q_h)$ to solve the biharmonic equation
\begin{equation}\label{eq:biharmonic}
\begin{cases}
\quad\;\;\,\Delta^2u = -f & \textrm{ in } \Omega,\\
u=\partial_nu=0 & \textrm{ on } \partial\Omega,
\end{cases}
\end{equation}
where $f\in L^2(\Omega)$. A mixed formulation of the biharmonic equation \eqref{eq:biharmonic} is to find $\boldsymbol \sigma\in\boldsymbol{H}(\div \boldsymbol{\div },\Omega; \mathbb{S})$ and $u\in L^2(\Omega)$ such that
\begin{align}
(\boldsymbol{\sigma}, \boldsymbol{\tau})+(\div\boldsymbol \div\boldsymbol{\tau}, u)&=0 \quad\quad\quad\;\; \forall~\boldsymbol{\tau}\in\boldsymbol{H}(\div \boldsymbol{\div},\Omega; \mathbb{S}), \label{mixedform1} \\
(\div\boldsymbol \div\boldsymbol{\sigma}, v)&=(f, v) \quad\;\;\; \forall~v\in L^2(\Omega). \label{mixedform2}
\end{align}
Note that Dirichlet-type boundary of $u$ is imposed as natural condition in the mixed formulation.

\subsection{Mixed finite element methods}
Employing the finite element spaces $\boldsymbol\Sigma_h\times\mathcal Q_h$ to discretize $\boldsymbol{H}(\div \boldsymbol{\div },\Omega; \mathbb{S})\times L^2(\Omega)$, we propose the following discrete methods for the mixed formulation \eqref{mixedform1}-\eqref{mixedform2}:
find $\boldsymbol \sigma_h\in\boldsymbol\Sigma_h$ and $u_h\in \mathcal Q_h$ such that
\begin{align}
(\boldsymbol{\sigma}_h, \boldsymbol{\tau}_h)+(\div\boldsymbol \div\boldsymbol{\tau}_h, u_h)&=0 \quad\quad\quad\;\; \forall~\boldsymbol{\tau}_h\in \boldsymbol\Sigma_h, \label{mfem1} \\
(\div\boldsymbol \div\boldsymbol{\sigma}_h, v_h)&=(f, v_h) \quad\; \forall~v_h\in \mathcal Q_h. \label{mfem2}
\end{align}

As a result of \eqref{eq:Pihcd} and \eqref{eq:PiKestimate}, we have the inf-sup condition
\begin{equation*}
\|v_h\|_0\lesssim \sup_{\boldsymbol{\tau}_h\in\boldsymbol\Sigma_h}\frac{(\div\boldsymbol \div\boldsymbol{\tau}_h, v_h)}{\|\boldsymbol{\tau}_h\|_{\boldsymbol{H}(\div \boldsymbol{\div })}}.
\end{equation*}
By the Babu{\v{s}}ka-Brezzi theory \cite{BoffiBrezziFortin2013}, the following stability result holds
\begin{equation}\label{eq:discretestability}
\|\widetilde{\boldsymbol{\sigma}}_h\|_{\boldsymbol{H}(\div \boldsymbol{\div })}+\|\widetilde{u}_h\|_0\lesssim \sup_{\boldsymbol{\tau}_h\in\boldsymbol\Sigma_h\atop v_h\in\mathcal Q_h}\frac{(\widetilde{\boldsymbol{\sigma}}_h, \boldsymbol{\tau}_h)+(\div\boldsymbol \div\boldsymbol{\tau}_h, \widetilde{u}_h)+(\div\boldsymbol \div\widetilde{\boldsymbol{\sigma}}_h, v_h)}{\|\boldsymbol{\tau}_h\|_{\boldsymbol{H}(\div \boldsymbol{\div })} + \|v_h\|_0}
\end{equation}
for any $\widetilde{\boldsymbol{\sigma}}_h\in\boldsymbol\Sigma_h$ and $\widetilde{u}_h\in\mathcal Q_h$.
Hence the mixed finite element method \eqref{mfem1}-\eqref{mfem2} is well-posed.

\begin{theorem}
Let $\boldsymbol \sigma_h\in\boldsymbol\Sigma_h$ and $u_h\in \mathcal Q_h$ be the solution of the mixed finite element methods \eqref{mfem1}-\eqref{mfem2}. Assume $\boldsymbol \sigma\in\boldsymbol{H}^{\min\{\ell,k\}+1}(\Omega; \mathbb{S})$, $u\in H^{k-1}(\Omega)$ and $f\in H^{k-1}(\Omega)$. Then
\begin{align}
\label{eq:errorestimate1}
\|\boldsymbol{\sigma}-\boldsymbol{\sigma}_h\|_0+\|Q_h u-u_h\|_0\lesssim h^{\min\{\ell,k\}+1}|\boldsymbol \sigma|_{\min\{\ell,k\}+1},
\\
\label{eq:errorestimate2}
\|u-u_h\|_0\lesssim h^{\min\{\ell,k\}+1}|\boldsymbol \sigma|_{\min\{\ell,k\}+1}+h^{k-1}|u|_{k-1},
\\
\label{eq:errorestimate3}
\|\boldsymbol{\sigma}-\boldsymbol{\sigma}_h\|_{\boldsymbol{H}(\div \boldsymbol{\div })}\lesssim h^{\min\{\ell,k\}+1}|\boldsymbol \sigma|_{\min\{\ell,k\}+1}+h^{k-1}|f|_{k-1}.
\end{align}
\end{theorem}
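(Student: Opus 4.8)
The plan is to apply the standard Babuška--Brezzi error estimate together with the commuting diagram \eqref{eq:divdivcdfem} and the interpolation estimate \eqref{eq:PiKestimate}, then upgrade the bound on $\|Q_hu - u_h\|_0$ by a duality-type argument exploiting the commuting property $\div\boldsymbol\div(\boldsymbol\Pi_h\boldsymbol\tau) = Q_h\div\boldsymbol\div\boldsymbol\tau$. First I would record the quasi-optimality consequence of the discrete stability \eqref{eq:discretestability}: since the bilinear form is exactly that of a mixed Poisson-type problem, one gets
\[
\|\boldsymbol\sigma - \boldsymbol\sigma_h\|_{\boldsymbol H(\div\boldsymbol\div)} + \|u - u_h\|_0 \lesssim \inf_{\boldsymbol\tau_h\in\boldsymbol\Sigma_h}\|\boldsymbol\sigma - \boldsymbol\tau_h\|_{\boldsymbol H(\div\boldsymbol\div)} + \inf_{v_h\in\mathcal Q_h}\|u - v_h\|_0.
\]
Taking $\boldsymbol\tau_h = \boldsymbol\Pi_h\boldsymbol\sigma$ and $v_h = Q_h u$, the term $\|\div\boldsymbol\div(\boldsymbol\sigma - \boldsymbol\Pi_h\boldsymbol\sigma)\|_0 = \|\div\boldsymbol\div\boldsymbol\sigma - Q_h\div\boldsymbol\div\boldsymbol\sigma\|_0 = \|f - Q_h f\|_0 \lesssim h^{k-1}|f|_{k-1}$ by \eqref{eq:Pihcd} and the commuting diagram (recall $\div\boldsymbol\div\boldsymbol\sigma = -f$, and $Q_h=Q_h^{k-2}$ has approximation order $k-1$), which gives \eqref{eq:errorestimate3} once combined with $\|\boldsymbol\sigma - \boldsymbol\Pi_h\boldsymbol\sigma\|_0 \lesssim h^{\min\{\ell,k\}+1}|\boldsymbol\sigma|_{\min\{\ell,k\}+1}$ from \eqref{eq:PiKestimate}, and also yields \eqref{eq:errorestimate2} after inserting $\|u - Q_h u\|_0 \lesssim h^{k-1}|u|_{k-1}$.

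The more delicate point is \eqref{eq:errorestimate1}, i.e.\ showing $\|\boldsymbol\sigma - \boldsymbol\sigma_h\|_0 + \|Q_h u - u_h\|_0$ is of the higher order $h^{\min\{\ell,k\}+1}|\boldsymbol\sigma|_{\min\{\ell,k\}+1}$ without the $|f|_{k-1}$ or $|u|_{k-1}$ contributions. The key observation is that, by \eqref{eq:Pihcd}, $\div\boldsymbol\div(\boldsymbol\Pi_h\boldsymbol\sigma - \boldsymbol\sigma_h) = Q_h\div\boldsymbol\div\boldsymbol\sigma - \div\boldsymbol\div\boldsymbol\sigma_h = Q_h(-f) - (-Q_hf) = 0$, using \eqref{mfem2} which says $\div\boldsymbol\div\boldsymbol\sigma_h = Q_h f$ (test with all $v_h\in\mathcal Q_h$ and note $\div\boldsymbol\div\boldsymbol\sigma_h\in\mathcal Q_h$). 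Hence $\boldsymbol e_h := \boldsymbol\Pi_h\boldsymbol\sigma - \boldsymbol\sigma_h$ lies in the kernel of $\div\boldsymbol\div$ on $\boldsymbol\Sigma_h$. Subtracting \eqref{mfem1} from \eqref{mixedform1} tested against $\boldsymbol e_h$ and using that $(\div\boldsymbol\div\boldsymbol e_h, \cdot) = 0$ kills the $u$-terms, I get $(\boldsymbol\sigma - \boldsymbol\sigma_h, \boldsymbol e_h) = 0$, so $\|\boldsymbol e_h\|_0^2 = (\boldsymbol\Pi_h\boldsymbol\sigma - \boldsymbol\sigma, \boldsymbol e_h) \le \|\boldsymbol\Pi_h\boldsymbol\sigma - \boldsymbol\sigma\|_0\|\boldsymbol e_h\|_0$, giving $\|\boldsymbol e_h\|_0 \lesssim h^{\min\{\ell,k\}+1}|\boldsymbol\sigma|_{\min\{\ell,k\}+1}$ and then $\|\boldsymbol\sigma - \boldsymbol\sigma_h\|_0$ by triangle inequality. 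For $\|Q_h u - u_h\|_0$ I would use the inf-sup condition on $\div\boldsymbol\div$: choose $\boldsymbol\tau_h\in\boldsymbol\Sigma_h$ with $\|\boldsymbol\tau_h\|_{\boldsymbol H(\div\boldsymbol\div)}\lesssim\|Q_hu - u_h\|_0$ realizing the supremum, then from the first equations $(\div\boldsymbol\div\boldsymbol\tau_h, Q_h u - u_h) = (\div\boldsymbol\div\boldsymbol\tau_h, u - u_h) = -(\boldsymbol\sigma - \boldsymbol\sigma_h, \boldsymbol\tau_h)$, whence $\|Q_h u - u_h\|_0 \lesssim \|\boldsymbol\sigma - \boldsymbol\sigma_h\|_0$, closing the estimate.

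The main obstacle I anticipate is organizing the argument so that the duality step for $\|Q_h u - u_h\|_0$ cleanly reduces to $\|\boldsymbol\sigma - \boldsymbol\sigma_h\|_0$ rather than to the weaker $\boldsymbol H(\div\boldsymbol\div)$-norm — this hinges on using $(\div\boldsymbol\div\boldsymbol\tau_h, Q_hu - u_h) = (\div\boldsymbol\div\boldsymbol\tau_h, u - u_h)$, valid because $\div\boldsymbol\div\boldsymbol\tau_h\in\mathcal Q_h$, and then on the error equation, so that no term like $\|\div\boldsymbol\div(\boldsymbol\sigma - \boldsymbol\sigma_h)\|_0$ enters. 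Everything else is routine: the interpolation bounds come directly from \eqref{eq:PiKestimate}, and the commuting relations \eqref{eq:Pihcd}--\eqref{eq:Ihcd} do all the structural work. I would also note in passing that the regularity hypotheses $u\in H^{k-1}$, $f\in H^{k-1}$ are only needed for \eqref{eq:errorestimate2}--\eqref{eq:errorestimate3}, while \eqref{eq:errorestimate1} needs only $\boldsymbol\sigma\in\boldsymbol H^{\min\{\ell,k\}+1}$.
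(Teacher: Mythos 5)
Your proof is correct and rests on the same key ingredients as the paper's: the commuting property \eqref{eq:Pihcd}, the interpolation estimate \eqref{eq:PiKestimate}, and the discrete stability \eqref{eq:discretestability}. Two remarks on the organization.

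Where the paper plugs $\widetilde{\boldsymbol\sigma}_h = \boldsymbol\Pi_h\boldsymbol\sigma - \boldsymbol\sigma_h$ and $\widetilde u_h = Q_hu - u_h$ into \eqref{eq:discretestability} in one shot and collapses the numerator to $(\boldsymbol\Pi_h\boldsymbol\sigma - \boldsymbol\sigma, \boldsymbol\tau_h)$ via the error equation \eqref{eq:errorequation}, you decouple: first the direct orthogonality argument $(\boldsymbol\sigma - \boldsymbol\sigma_h, \boldsymbol e_h) = 0$ on the discrete $\div\boldsymbol\div$-kernel to get $\|\boldsymbol e_h\|_0\lesssim\|\boldsymbol\Pi_h\boldsymbol\sigma - \boldsymbol\sigma\|_0$, then the inf-sup to get $\|Q_hu - u_h\|_0\lesssim\|\boldsymbol\sigma - \boldsymbol\sigma_h\|_0$. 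This is the same abstract content unpacked, and it makes the kernel structure $\div\boldsymbol\div(\boldsymbol\Pi_h\boldsymbol\sigma - \boldsymbol\sigma_h)=0$ more explicit; either form is fine.

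One imprecision worth fixing: your opening paragraph asserts that the naive quasi-optimality already ``also yields \eqref{eq:errorestimate2}.'' It does not --- that estimate controls $\|u - u_h\|_0$ jointly with $\|\boldsymbol\sigma-\boldsymbol\sigma_h\|_{\boldsymbol H(\div\boldsymbol\div)}$ and thus drags in the spurious $h^{k-1}|f|_{k-1}$ term. The bound \eqref{eq:errorestimate2} as stated (with no $|f|$ dependence) only follows after you have proved the sharper estimate $\|Q_hu - u_h\|_0 \lesssim h^{\min\{\ell,k\}+1}|\boldsymbol\sigma|_{\min\{\ell,k\}+1}$ in the later part of your argument and then added $\|u - Q_hu\|_0$ by triangle inequality, which is exactly what the paper does. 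So reorder: establish \eqref{eq:errorestimate1} first, then \eqref{eq:errorestimate2} follows, and only \eqref{eq:errorestimate3} needs the $|f|_{k-1}$ contribution from $\|\div\boldsymbol\div(\boldsymbol\sigma-\boldsymbol\Pi_h\boldsymbol\sigma)\|_0 = \|f-Q_hf\|_0$. Your closing remark about the regularity hypotheses should likewise be sharpened: $f\in H^{k-1}$ is needed only for \eqref{eq:errorestimate3}, not for \eqref{eq:errorestimate2}.
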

\begin{proof}
Subtracting \eqref{mfem1}-\eqref{mfem2} from \eqref{mixedform1}-\eqref{mixedform2}, it follows
\[
(\boldsymbol{\sigma}-\boldsymbol{\sigma}_h, \boldsymbol{\tau}_h)+(\div\boldsymbol \div\boldsymbol{\tau}_h, u-u_h)+(\div\boldsymbol \div(\boldsymbol{\sigma}-\boldsymbol{\sigma}_h), v_h)=0,
\]
which combined with \eqref{eq:Pihcd} yields
\begin{equation}\label{eq:errorequation}
(\boldsymbol{\sigma}-\boldsymbol{\sigma}_h, \boldsymbol{\tau}_h)+(\div\boldsymbol \div\boldsymbol{\tau}_h, Q_h u-u_h)+(\div\boldsymbol \div(\boldsymbol \Pi_h\boldsymbol{\sigma}-\boldsymbol{\sigma}_h), v_h)=0.
\end{equation}
Taking $\widetilde{\boldsymbol{\sigma}}_h=\boldsymbol \Pi_h\boldsymbol{\sigma}-\boldsymbol{\sigma}_h$ and $\widetilde{u}_h=Q_h u-u_h$ in \eqref{eq:discretestability}, we get
\begin{align*}
\|\boldsymbol \Pi_h\boldsymbol{\sigma}-\boldsymbol{\sigma}_h\|_{\boldsymbol{H}(\div \boldsymbol{\div })}+\|Q_h u-u_h\|_0&\lesssim \sup_{\boldsymbol{\tau}_h\in\boldsymbol\Sigma_h\atop v_h\in\mathcal Q_h}\frac{(\boldsymbol \Pi_h\boldsymbol{\sigma}-\boldsymbol{\sigma}, \boldsymbol{\tau}_h)}{\|\boldsymbol{\tau}_h\|_{\boldsymbol{H}(\div \boldsymbol{\div })} + \|v_h\|_0} \\
&\leq \|\boldsymbol \Pi_h\boldsymbol{\sigma}-\boldsymbol{\sigma}\|_0.
\end{align*}
Hence we have
\begin{align*}
\|\boldsymbol{\sigma}-\boldsymbol{\sigma}_h\|_0+\|Q_h u-u_h\|_0\lesssim \|\boldsymbol{\sigma}-\boldsymbol \Pi_h\boldsymbol{\sigma}\|_0,
\\
\|u-u_h\|_0\lesssim \|\boldsymbol{\sigma}-\boldsymbol \Pi_h\boldsymbol{\sigma}\|_0+\|u-Q_h u\|_0,
\\
\|\boldsymbol{\sigma}-\boldsymbol{\sigma}_h\|_{\boldsymbol{H}(\div \boldsymbol{\div })}\lesssim \|\boldsymbol{\sigma}-\boldsymbol \Pi_h\boldsymbol{\sigma}\|_{\boldsymbol{H}(\div \boldsymbol{\div })}.
\end{align*}
Finally we conclude \eqref{eq:errorestimate1}, \eqref{eq:errorestimate2} and \eqref{eq:errorestimate3} from \eqref{eq:PiKestimate}.
\end{proof}

If we are interested in the approximation of stress in $\boldsymbol  H(\div\boldsymbol  \div)$ norm, it is more economic to chose $\ell = k - 1$.
If instead the $L^2$-norm is of concern, $\ell = k$ is a better choice to achieve higher accuracy.

The estimate of $\|Q_hu-u_h\|_0$ in \eqref{eq:errorestimate1} is superconvergent and can be used to postprocess to get a high order approximation of displacement.

\subsection{Superconvergence of displacement in mesh-dependent norm}

Equip the space $$H^2(\mathcal T_h):=\{v\in L^2(\Omega): v|_K\in H^2(K) \textrm{ for each } K\in\mathcal T_h\}$$ with squared mesh-dependent norm
\[
|v|_{2,h}^2:=\sum_{K\in\mathcal{T}_h}|v|_{2,K}^2+\sum_{e\in\mathcal E_h}\left(h_e^{-3}\|\llbracket v\rrbracket\|_{0,e}^2+h_e^{-1}\|\llbracket \partial_{n_e}v\rrbracket\|_{0,e}^2\right),
\]
where $\llbracket v\rrbracket$ and $\llbracket \partial_{n_e}v\rrbracket$ are jumps of $v$ and $\partial_{n_e}v$ across $e$ for $e\in \mathcal E_h^i$, and $\llbracket v\rrbracket=v$ and $\llbracket \partial_{n_e}v\rrbracket=\partial_{n_e}v$ for $e\in \mathcal E_h\backslash\mathcal E_h^i$.

\begin{lemma}
It holds the inf-sup condition
\begin{equation}\label{eq:discreteinfsupmd}
|v_h|_{2,h}\lesssim \sup_{\boldsymbol{\tau}_h\in\boldsymbol\Sigma_h}\frac{(\div\boldsymbol \div\boldsymbol{\tau}_h, v_h)}{\|\boldsymbol{\tau}_h\|_0}\quad\forall~v_h\in\mathcal Q_h.
\end{equation}
\end{lemma}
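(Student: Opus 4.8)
The plan is to establish \eqref{eq:discreteinfsupmd} by exhibiting, for each $v_h\in\mathcal Q_h$, an explicit test tensor $\boldsymbol\tau_h\in\boldsymbol\Sigma_h$, prescribed through the degrees of freedom \eqref{Hdivdivfemdof1}--\eqref{Hdivdivfemdof4}, such that the elementwise Green's identity \eqref{eq:greenidentitydivdiv} turns $(\div\boldsymbol\div\boldsymbol\tau_h,v_h)$ into a nonnegative quantity bounded below by $|v_h|_{2,h}^2$, while at the same time $\|\boldsymbol\tau_h\|_0\lesssim|v_h|_{2,h}$. Concretely, fix for each $e\in\mathcal E_h$ a unit normal $\boldsymbol n_e$ and tangent $\boldsymbol t_e$, choosing $\boldsymbol n_e$ to be the outward normal on boundary edges, and let $b_e\ge 0$ denote the quadratic edge bubble of $e$. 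I would define $\boldsymbol\tau_h$ elementwise on each $K\in\mathcal T_h$ by: (a) $\boldsymbol\tau_h(\delta)=\boldsymbol 0$ for all $\delta\in\mathcal V(K)$; (b) $\boldsymbol n_e^{\intercal}\boldsymbol\tau_h\boldsymbol n_e|_e:=-h_e^{-1}b_e\llbracket\partial_{n_e}v_h\rrbracket$ on each $e\in\mathcal E(K)$; (c) $\bigl(\partial_t(\boldsymbol t_e^{\intercal}\boldsymbol\tau_h\boldsymbol n_e)+\boldsymbol n_e^{\intercal}\boldsymbol\div\boldsymbol\tau_h\bigr)|_e:=h_e^{-3}\llbracket v_h\rrbracket$ on each $e\in\mathcal E(K)$; (d) $(\boldsymbol\tau_h,\boldsymbol\varsigma)_K:=(\nabla^2 v_h,\boldsymbol\varsigma)_K$ for all $\boldsymbol\varsigma\in\nabla^2\mathbb P_{k-2}(K)\oplus\sym(\boldsymbol x^{\perp}\otimes\mathbb P_{\ell-2}(K;\mathbb R^2))$. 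These data are admissible: since $v_h|_K\in\mathbb P_{k-2}(K)$ and $\ell\ge k-1$, the polynomial $b_e\llbracket\partial_{n_e}v_h\rrbracket$ has degree at most $k-1\le\ell$ and vanishes at the endpoints of $e$, hence is a legitimate normal-normal trace once the vertex values are zero (cf.\ Lemma~\ref{lm:trace}); $\llbracket v_h\rrbracket|_e\in\mathbb P_{k-2}(e)\subseteq\mathbb P_{\ell-1}(e)$ is freely attained by \eqref{Hdivdivfemdof3}; and $\nabla^2 v_h|_K\in\nabla^2\mathbb P_{k-2}(K)$ lies in the interior test space of \eqref{Hdivdivfemdof4}. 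By unisolvence (Lemma~\ref{lem:unisovlenHdivdivfem}) each $\boldsymbol\tau_h|_K$ is then uniquely determined, and because the quantities prescribed in (a)--(c) are single-valued on interior edges and vertices, $\boldsymbol\tau_h\in\boldsymbol\Sigma_h\subset\boldsymbol H(\div\boldsymbol\div,\Omega;\mathbb S)$.

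Next I would sum \eqref{eq:greenidentitydivdiv} over $K\in\mathcal T_h$. The vertex terms vanish by (a); the two boundary contributions $\boldsymbol n^{\intercal}\boldsymbol\tau_h\boldsymbol n$ and $\partial_t(\boldsymbol t^{\intercal}\boldsymbol\tau_h\boldsymbol n)+\boldsymbol n^{\intercal}\boldsymbol\div\boldsymbol\tau_h$, being single-valued, combine across the two adjacent elements (whose normals are opposite) into the jumps $\llbracket\partial_{n_e}v_h\rrbracket$ and $\llbracket v_h\rrbracket$, and boundary edges enter with the same sign thanks to the choice of $\boldsymbol n_e$ there; the volume term equals $\sum_K(\boldsymbol\tau_h,\nabla^2 v_h)_K=\sum_K|v_h|_{2,K}^2$ by (d) with $\boldsymbol\varsigma=\nabla^2 v_h$. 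Using (b), (c) and the mesh-independent equivalence $(b_e q,q)_e\gtrsim\|q\|_{0,e}^2$ on polynomials of fixed degree on $e$, this gives
\[
(\div\boldsymbol\div\boldsymbol\tau_h,v_h)=\sum_{K\in\mathcal T_h}|v_h|_{2,K}^2+\sum_{e\in\mathcal E_h}h_e^{-1}(b_e\llbracket\partial_{n_e}v_h\rrbracket,\llbracket\partial_{n_e}v_h\rrbracket)_e+\sum_{e\in\mathcal E_h}h_e^{-3}\|\llbracket v_h\rrbracket\|_{0,e}^2\gtrsim|v_h|_{2,h}^2 .
\]
For the denominator, a scaling argument together with the equivalence on the finite-dimensional space $\boldsymbol\Sigma_{\ell,k}(K)$ between $\|\cdot\|_{0,K}$ and the appropriately weighted degrees of freedom yields
\[
\|\boldsymbol\tau_h\|_{0,K}^2\lesssim\sum_{e\in\mathcal E(K)}\!\Bigl(h_e\|\boldsymbol n_e^{\intercal}\boldsymbol\tau_h\boldsymbol n_e\|_{0,e}^2+h_e^3\|\partial_t(\boldsymbol t_e^{\intercal}\boldsymbol\tau_h\boldsymbol n_e)+\boldsymbol n_e^{\intercal}\boldsymbol\div\boldsymbol\tau_h\|_{0,e}^2\Bigr)+|v_h|_{2,K}^2 ,
\]
where the vertex part vanishes and the interior moments are controlled by $\|\nabla^2 v_h\|_{0,K}=|v_h|_{2,K}$ through (d) and Cauchy--Schwarz. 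Substituting (b), (c) and using $0\le b_e\le 1$, the edge terms are bounded by $h_e^{-1}\|\llbracket\partial_{n_e}v_h\rrbracket\|_{0,e}^2$ and $h_e^{-3}\|\llbracket v_h\rrbracket\|_{0,e}^2$, so summing over $K$ gives $\|\boldsymbol\tau_h\|_0^2\lesssim|v_h|_{2,h}^2$. Combining the two displays proves \eqref{eq:discreteinfsupmd}.

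The delicate point — and the reason the vertex degrees of freedom of $\boldsymbol\Sigma_h$ must be handled separately — is that one cannot in general match prescribed normal-normal traces on all edges meeting at a vertex using the single symmetric matrix $\boldsymbol\tau_h(\delta)$; setting $\boldsymbol\tau_h(\delta)=\boldsymbol 0$ sidesteps this, at the price of inserting the edge bubble $b_e$ into (b), which is exactly what the structural hypothesis $\ell\ge k-1$ of the element accommodates, so that the normal-normal trace still tests $\llbracket\partial_{n_e}v_h\rrbracket$ effectively. I expect the remaining items — the sign and orientation bookkeeping for edges in the summed Green's identity, and the reference-element scaling in the norm equivalence — to be routine.
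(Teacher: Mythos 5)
Your argument is correct and is essentially the paper's proof: construct $\boldsymbol\tau_h$ by prescribing its degrees of freedom so that summing Green's identity \eqref{eq:greenidentitydivdiv} turns $(\div\boldsymbol\div\boldsymbol\tau_h,v_h)$ into a quantity comparable to $|v_h|_{2,h}^2$, while a scaling argument gives $\|\boldsymbol\tau_h\|_0\lesssim|v_h|_{2,h}$. The only deviation is cosmetic: instead of prescribing a full pointwise normal-normal trace and inserting an edge bubble to keep it compatible with $\boldsymbol\tau_h(\delta)=\boldsymbol 0$, the paper simply sets the moment degree of freedom $Q_{\ell-2}^e(\boldsymbol n^{\intercal}\boldsymbol\tau_h\boldsymbol n)=-h_e^{-1}\llbracket\partial_{n_e}v_h\rrbracket$, which already produces the pairing exactly because $\partial_n v_h|_e\in\mathbb P_{k-3}(e)\subseteq\mathbb P_{\ell-2}(e)$, yielding the identity $(\div\boldsymbol\div\boldsymbol\tau_h,v_h)=|v_h|_{2,h}^2$ and avoiding the extra norm equivalence $(b_e q,q)_e\gtrsim\|q\|_{0,e}^2$.
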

\begin{proof}
Let $\boldsymbol{\tau}_h\in\boldsymbol\Sigma_h$ be determined by
\begin{align*}
\boldsymbol \tau_h (\delta)&=0  \qquad\qquad\quad\;\;\;\forall~\delta\in \mathcal V_h, \\
Q_{\ell-2}^e(\boldsymbol  n^{\intercal}\boldsymbol \tau_h\boldsymbol n)&=-h_e^{-1}\llbracket \partial_{n_e}v_h\rrbracket  \;\;\forall~ e\in\mathcal E_h, \\
\partial_{t_e}(\boldsymbol  t^{\intercal}\boldsymbol \tau_h\boldsymbol  n)+\boldsymbol n_e^{\intercal}\boldsymbol \div\boldsymbol \tau_h&=h_e^{-3}\llbracket v_h\rrbracket  \qquad\;\;\;\forall~ e\in\mathcal E_h, \\
(\boldsymbol \tau_h, \boldsymbol \varsigma)_K&=(\nabla^2v_h, \boldsymbol \varsigma)_K  \quad\;\;\forall~\boldsymbol \varsigma\in\nabla^2\mathbb P_{k-2}(K), \\
(\boldsymbol \tau_h, \boldsymbol \varsigma)_K&=0  \qquad\qquad\quad\;\;\;\forall~\boldsymbol \varsigma\in\sym (\bs x^{\perp}\otimes \mathbb P_{\ell-2}(K;\mathbb R^2))
\end{align*}
for each $K\in\mathcal T_h$.
Due to the scaling argument, it holds
\[
\|\boldsymbol \tau_h\|_0\lesssim |v|_{2,h}.
\]
And we get from \eqref{eq:greenidentitydivdiv} that
\[
(\div\boldsymbol \div\boldsymbol \tau_h, v_h)=|v|_{2,h}^2.
\]
Hence the inf-sup condition \eqref{eq:discreteinfsupmd} follows.
\end{proof}

An immediate result of the inf-sup condition \eqref{eq:discreteinfsupmd} is the stability result
\begin{equation}\label{eq:discretestabilitymd}
\|\widetilde{\boldsymbol{\sigma}}_h\|_{0}+|\widetilde{u}_h|_{2,h}\lesssim \sup_{\boldsymbol{\tau}_h\in\boldsymbol\Sigma_h\atop v_h\in\mathcal Q_h}\frac{(\widetilde{\boldsymbol{\sigma}}_h, \boldsymbol{\tau}_h)+(\div\boldsymbol \div\boldsymbol{\tau}_h, \widetilde{u}_h)+(\div\boldsymbol \div\widetilde{\boldsymbol{\sigma}}_h, v_h)}{\|\boldsymbol{\tau}_h\|_{0} + |v_h|_{2,h}}
\end{equation}
for any $\widetilde{\boldsymbol{\sigma}}_h\in\boldsymbol\Sigma_h$ and $\widetilde{u}_h\in\mathcal Q_h$.

\begin{theorem}
Let $\boldsymbol \sigma_h\in\boldsymbol\Sigma_h$ and $u_h\in \mathcal Q_h$ be the solution of the mixed finite element methods \eqref{mfem1}-\eqref{mfem2}. Assume $\boldsymbol \sigma\in\boldsymbol{H}^{\min\{\ell,k\}+1}(\Omega; \mathbb{S})$. Then
\begin{equation}\label{eq:uH2superconvergence}
|Q_hu-u_h|_{2,h}\lesssim h^{\min\{\ell,k\}+1}|\boldsymbol \sigma|_{\min\{\ell,k\}+1}.
\end{equation}
\end{theorem}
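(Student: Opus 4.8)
The plan is to combine the mesh-dependent inf-sup stability \eqref{eq:discretestabilitymd} with the error equation \eqref{eq:errorequation} and the commuting property \eqref{eq:Pihcd}, exactly as in the $L^2$-theory but now measuring the displacement error in $|\cdot|_{2,h}$. First I would recall from \eqref{eq:errorequation} that
\[
(\boldsymbol{\sigma}-\boldsymbol{\sigma}_h, \boldsymbol{\tau}_h)+(\div\boldsymbol \div\boldsymbol{\tau}_h, Q_h u-u_h)+(\div\boldsymbol \div(\boldsymbol \Pi_h\boldsymbol{\sigma}-\boldsymbol{\sigma}_h), v_h)=0
\]
for all $\boldsymbol{\tau}_h\in\boldsymbol\Sigma_h$ and $v_h\in\mathcal Q_h$. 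Applying \eqref{eq:discretestabilitymd} with $\widetilde{\boldsymbol\sigma}_h=\boldsymbol \Pi_h\boldsymbol\sigma-\boldsymbol\sigma_h$ and $\widetilde u_h=Q_hu-u_h$, the numerator becomes $(\boldsymbol \Pi_h\boldsymbol\sigma-\boldsymbol\sigma,\boldsymbol{\tau}_h)$ after the usual cancellation using \eqref{eq:errorequation} and \eqref{eq:Pihcd}, so that
\[
\|\boldsymbol \Pi_h\boldsymbol\sigma-\boldsymbol\sigma_h\|_0+|Q_hu-u_h|_{2,h}\lesssim \|\boldsymbol \Pi_h\boldsymbol\sigma-\boldsymbol\sigma\|_0.
\]
A triangle inequality is not even needed on the left for the displacement term; it is already the quantity we want.

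The second step is to bound $\|\boldsymbol\sigma-\boldsymbol \Pi_h\boldsymbol\sigma\|_0$ by the interpolation estimate \eqref{eq:PiKestimate} summed over $K\in\mathcal T_h$, which gives $h^{\min\{\ell,k\}+1}|\boldsymbol\sigma|_{\min\{\ell,k\}+1}$ under the stated regularity $\boldsymbol\sigma\in\boldsymbol H^{\min\{\ell,k\}+1}(\Omega;\mathbb S)$. This yields $|Q_hu-u_h|_{2,h}\lesssim h^{\min\{\ell,k\}+1}|\boldsymbol\sigma|_{\min\{\ell,k\}+1}$, which is \eqref{eq:uH2superconvergence}. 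The point worth emphasizing is that, unlike the naive estimate one would get by bounding $|Q_hu-u_h|_{2,h}$ through an inverse inequality from $\|Q_hu-u_h\|_0$ (which would lose powers of $h$), the mesh-dependent inf-sup condition \eqref{eq:discreteinfsupmd} is scaled precisely so that the jump terms and broken seminorm are controlled directly, with no $h$-penalty, by $\|\boldsymbol\tau_h\|_0$; this is what produces the superconvergent order (three or four powers higher than the optimal rate for $|u-Q_hu|_{2,h}+\text{consistency}$, depending on whether $\ell=k-1$ or $\ell=k$).

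The main obstacle is not in the present proof—given \eqref{eq:discretestabilitymd} it is essentially a two-line argument—but lies in the already-established lemma behind \eqref{eq:discreteinfsupmd}: constructing the test tensor $\boldsymbol\tau_h\in\boldsymbol\Sigma_h$ whose degrees of freedom encode $h_e^{-3}\llbracket v_h\rrbracket$, $h_e^{-1}\llbracket\partial_{n_e}v_h\rrbracket$ and $\nabla^2 v_h$, verifying $\|\boldsymbol\tau_h\|_0\lesssim|v_h|_{2,h}$ by a scaling argument, and checking via the Green's identity \eqref{eq:greenidentitydivdiv} that $(\div\boldsymbol\div\boldsymbol\tau_h,v_h)=|v_h|_{2,h}^2$. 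In writing the present proof I would therefore simply cite \eqref{eq:discretestabilitymd}, invoke \eqref{eq:errorequation} and \eqref{eq:Pihcd} for the cancellation, and close with \eqref{eq:PiKestimate}; I would also note that the same argument gives the sharper-looking bound $\|\boldsymbol \Pi_h\boldsymbol\sigma-\boldsymbol\sigma_h\|_0+|Q_hu-u_h|_{2,h}\lesssim\|\boldsymbol\sigma-\boldsymbol \Pi_h\boldsymbol\sigma\|_0$ as a by-product, which may be recorded for the postprocessing construction.
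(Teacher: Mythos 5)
Your proposal is correct and follows essentially the same route as the paper: take $\widetilde{\boldsymbol\sigma}_h=\boldsymbol\Pi_h\boldsymbol\sigma-\boldsymbol\sigma_h$, $\widetilde u_h=Q_hu-u_h$ in the mesh-dependent stability estimate \eqref{eq:discretestabilitymd}, cancel via the error equation \eqref{eq:errorequation} and the commuting property \eqref{eq:Pihcd} to reduce the numerator to $(\boldsymbol\Pi_h\boldsymbol\sigma-\boldsymbol\sigma,\boldsymbol\tau_h)$, and close with the interpolation bound \eqref{eq:PiKestimate}. The intermediate inequality $\|\boldsymbol\Pi_h\boldsymbol\sigma-\boldsymbol\sigma_h\|_0+|Q_hu-u_h|_{2,h}\lesssim\|\boldsymbol\Pi_h\boldsymbol\sigma-\boldsymbol\sigma\|_0$ you record is exactly the one appearing in the paper's proof.
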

\begin{proof}
Taking $\widetilde{\boldsymbol{\sigma}}_h=\boldsymbol \Pi_h\boldsymbol{\sigma}-\boldsymbol{\sigma}_h$ and $\widetilde{u}_h=Q_hu-u_h$ in \eqref{eq:discretestabilitymd}, we get from \eqref{eq:errorequation} that
\[
\|\boldsymbol \Pi_h\boldsymbol{\sigma}-\boldsymbol{\sigma}_h\|_{0}+|Q_hu-u_h|_{2,h}\lesssim \sup_{\boldsymbol{\tau}_h\in\boldsymbol\Sigma_h\atop v_h\in\mathcal Q_h}\frac{(\boldsymbol \Pi_h\boldsymbol{\sigma}-\boldsymbol{\sigma}, \boldsymbol{\tau}_h)}{\|\boldsymbol{\tau}_h\|_{0} + |v_h|_{2,h}}\leq \|\boldsymbol \Pi_h\boldsymbol{\sigma}-\boldsymbol{\sigma}\|_0,
\]
which gives \eqref{eq:uH2superconvergence}.
\end{proof}

The estimate of $|Q_hu-u_h|_{2,h}$ in \eqref{eq:uH2superconvergence} is superconvergent, which is  $\min\{\ell-k,0\}+4$ order higher than the optimal one and will be used to get a high order approximation of displacement by postprocessing.

\subsection{Postprocessing}
Define $u_h^{\ast}\in \mathbb P_{\min\{\ell,k\}+2}(\mathcal T_h)$ as follows: for each $K\in\mathcal T_h$,
\begin{align*}
(\nabla^2u_h^{\ast}, \nabla^2 q)_K&=-(\boldsymbol \sigma_h, \nabla^2 q)_K\quad\forall~q\in\mathbb P_{\min\{\ell,k\}+2}(\mathcal T_h),
\\
(u_h^{\ast}, q)_K&=(u_h, q)_K\qquad\quad\;\forall~q\in\mathbb P_{1}(\mathcal T_h).
\end{align*}
Namely we compute the projection of $\boldsymbol  \sigma_h$ in $H^2$ semi-inner product and use $u_h$ to impose the constraint. Recall that $k\geq 3$ and $\ell\geq k-1$. Thus $u_h\in \mathbb P_{k-2}(\mathcal T_h)$ and the local $H^2$-projection is well-defined.

\begin{theorem}
Let $\boldsymbol \sigma_h\in\boldsymbol\Sigma_h$ and $u_h\in \mathcal Q_h$ be the solution of the mixed finite element methods \eqref{mfem1}-\eqref{mfem2}. Assume $u\in H^{\min\{\ell,k\}+3}(\Omega)$. Then
\begin{equation}\label{eq:uastH2superconvergence}
|u-u_h^{\ast}|_{2,h}\lesssim h^{\min\{\ell,k\}+1}|u|_{\min\{\ell,k\}+3}.
\end{equation}
\end{theorem}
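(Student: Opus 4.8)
The plan is to measure $u_h^{\ast}$ against an elementwise $H^2$-projection of the exact solution and reduce everything to estimates already established. Write $m:=\min\{\ell,k\}$. From \eqref{mixedform1} and the clamped boundary conditions one has $\boldsymbol\sigma=-\nabla^2 u$, hence $|\boldsymbol\sigma|_{m+1}\lesssim|u|_{m+3}$. I would define $R_h u\in\mathbb P_{m+2}(\mathcal T_h)$ by solving, on each $K$, $(\nabla^2 R_h u,\nabla^2 q)_K=(\nabla^2 u,\nabla^2 q)_K$ for all $q\in\mathbb P_{m+2}(K)$ and $(R_h u,q)_K=(u,q)_K$ for all $q\in\mathbb P_1(K)$; this is exactly the postprocessing with $\boldsymbol\sigma_h$ replaced by $-\nabla^2 u$ and $u_h$ replaced by $Q_h u$ (note $(u,q)_K=(Q_h u,q)_K$ for $q\in\mathbb P_1(K)$ since $k\ge 3$). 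By the triangle inequality it then suffices to estimate $|u-R_h u|_{2,h}$ and $|R_h u-u_h^{\ast}|_{2,h}$ separately.

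For $|u-R_h u|_{2,h}$: the element terms $|u-R_h u|_{2,K}$ are controlled by the Bramble--Hilbert lemma applied to the local projection, giving $\|u-R_hu\|_{0,K}+h_K|u-R_hu|_{1,K}+h_K^2|u-R_hu|_{2,K}\lesssim h_K^{m+3}|u|_{m+3,K}$. Since $u\in H_0^2(\Omega)$ is single-valued, every jump of $R_h u$ across an edge $e\subset\partial K$ equals a jump of $R_h u-u$, so the trace inequalities $\|w\|_{0,e}^2\lesssim h_K^{-1}\|w\|_{0,K}^2+h_K|w|_{1,K}^2$ and $\|\partial_n w\|_{0,e}^2\lesssim h_K^{-1}|w|_{1,K}^2+h_K|w|_{2,K}^2$ convert the above into $h_e^{-3}\|\llbracket R_hu-u\rrbracket\|_{0,e}^2+h_e^{-1}\|\llbracket\partial_{n}(R_hu-u)\rrbracket\|_{0,e}^2\lesssim h_K^{2(m+1)}|u|_{m+3,K}^2$. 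Summing over elements yields $|u-R_hu|_{2,h}\lesssim h^{m+1}|u|_{m+3}$.

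For $|R_h u-u_h^{\ast}|_{2,h}$: put $w_h:=R_h u-u_h^{\ast}\in\mathbb P_{m+2}(\mathcal T_h)$. Subtracting the two defining relations gives, on each $K$, $(\nabla^2 w_h,\nabla^2 q)_K=(\boldsymbol\sigma_h-\boldsymbol\sigma,\nabla^2 q)_K$; testing with $q=w_h$ gives $|w_h|_{2,K}\le\|\boldsymbol\sigma-\boldsymbol\sigma_h\|_{0,K}$, so by \eqref{eq:errorestimate1}, $\big(\sum_{K}|w_h|_{2,K}^2\big)^{1/2}\le\|\boldsymbol\sigma-\boldsymbol\sigma_h\|_0\lesssim h^{m+1}|\boldsymbol\sigma|_{m+1}$. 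The jump seminorms of $w_h$ I would handle by splitting $w_h|_K=Q_1^K w_h+\tilde w_K$ with $\tilde w_K$ having vanishing $\mathbb P_1(K)$-moments. Since $(w_h,q)_K=(Q_hu-u_h,q)_K$ for $q\in\mathbb P_1(K)$, we get $Q_1^K w_h=Q_1^K(Q_hu-u_h)$; writing $\llbracket Q_1^K w_h\rrbracket=\llbracket Q_hu-u_h\rrbracket-\llbracket (Q_hu-u_h)-Q_1^K(Q_hu-u_h)\rrbracket$, the first term contributes $\lesssim|Q_hu-u_h|_{2,h}^2\lesssim h^{2(m+1)}|\boldsymbol\sigma|_{m+1}^2$ by the superconvergence estimate \eqref{eq:uH2superconvergence}, while the second term and the contribution of $\tilde w_K$ are absorbed into $\sum_K|w_h|_{2,K}^2$ and $\sum_K|Q_hu-u_h|_{2,K}^2$ via the local $\mathbb P_1$-approximation estimates, the scaled Poincar\'e-type bounds $\|\tilde w_K\|_{0,K}\lesssim h_K^2|w_h|_{2,K}$, $|\tilde w_K|_{1,K}\lesssim h_K|w_h|_{2,K}$, and the trace inequalities above (the $\partial_n$-jumps are treated identically). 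Altogether $|R_hu-u_h^{\ast}|_{2,h}\lesssim h^{m+1}|\boldsymbol\sigma|_{m+1}\lesssim h^{m+1}|u|_{m+3}$, which combined with the first bound gives \eqref{eq:uastH2superconvergence}.

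The main obstacle will be the jump contributions to $|R_hu-u_h^{\ast}|_{2,h}$: unlike the element $H^2$-seminorm, they are not controlled by the local residual equation, and one must exploit that the $\mathbb P_1$-part of $w_h$ coincides with that of $Q_hu-u_h$ so that its jumps inherit the already-established superconvergence \eqref{eq:uH2superconvergence}, while the higher-degree part of $w_h$ is absorbed by the broken $H^2$-seminorm through scaling; keeping track of the correct powers of $h_e$ in these local estimates is the delicate bookkeeping.
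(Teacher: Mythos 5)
Your argument is correct and follows essentially the same route as the paper: you split off an auxiliary local polynomial approximation of $u$, use the defining relation of the postprocessing to bound the elementwise $\nabla^2$-part by $\|\boldsymbol\sigma-\boldsymbol\sigma_h\|_0$, identify the $\mathbb P_1$-moment of the remainder with $Q_h^1(Q_hu-u_h)$ so that the superconvergence \eqref{eq:uH2superconvergence} applies, and absorb the zero-$\mathbb P_1$-moment part into the broken $H^2$-seminorm via scaled Poincar\'e and trace inequalities. The only cosmetic difference is your choice of the auxiliary operator $R_h$ (an elementwise $H^2$-type projection) in place of the paper's $L^2$-projection $Q_h^{\min\{\ell,k\}+2}$; both work since only the local degree-$(m+2)$ approximation order is used.
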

\begin{proof}
Let $z=(I-Q_h^1)(Q_h^{\min\{\ell,k\}+2}u-u_h^{\ast})$. By the definition of $u_h^{\ast}$, it follows
\begin{align*}
\|\nabla^2z\|_{0,K}^2&=(\nabla^2(Q_{\min\{\ell,k\}+2}^Ku-u), \nabla^2z)_{K}+(\nabla^2(u-u_h^{\ast}), \nabla^2z)_{K} \\
&=(\nabla^2(Q_{\min\{\ell,k\}+2}^Ku-u), \nabla^2z)_{K}+(\boldsymbol \sigma_h-\boldsymbol \sigma, \nabla^2z)_{K},
\end{align*}
which implies
\begin{equation}\label{eq:20200424-1}
|z|_{2,h}^2\lesssim \sum_{K\in\mathcal T_h}|u-Q_h^{\min\{\ell,k\}+2}u|_{2,K}^2+\|\boldsymbol \sigma-\boldsymbol \sigma_h\|_{0}^2.
\end{equation}
Since $Q_h^1(Q_h^{\min\{\ell,k\}+2}u-u_h^{\ast})=Q_h^1(Q_h^{k-2}u-u_h)$ and
\[
|(I-Q_h^1)(Q_h^{k-2}u-u_h)|_{2,h}^2\lesssim \sum_{K\in\mathcal T_h}|Q_h^{k-2}u-u_h|_{2,K}^2,
\]
we get
\begin{equation}\label{eq:20200424-2}
|Q_h^1(Q_h^{\min\{\ell,k\}+2}u-u_h^{\ast})|_{2,h}\lesssim |Q_h^1(Q_h^{k-2}u-u_h)|_{2,h}\lesssim |Q_h^{k-2}u-u_h|_{2,h}.
\end{equation}
Combining \eqref{eq:20200424-1} and \eqref{eq:20200424-2} gives
\[
|Q_h^{\min\{\ell,k\}+2}u-u_h^{\ast}|_{2,h}\leq |u-Q_h^{\min\{\ell,k\}+2}u|_{2,h}+\|\boldsymbol \sigma-\boldsymbol \sigma_h\|_{0}+|Q_h^{k-2}u-u_h|_{2,h}.
\]
Finally \eqref{eq:uastH2superconvergence} follows from \eqref{eq:errorestimate1} and \eqref{eq:uH2superconvergence}.
\end{proof}

\subsection{Hybridization}

In this subsection we consider a partial hybridization of the mixed finite element methods \eqref{mfem1}-\eqref{mfem2} by relaxing the continuity of the effective transverse shear force.
To this end, let
\begin{align*}
\widetilde{\boldsymbol\Sigma}_h:=\{\boldsymbol \tau_h\in \boldsymbol  L^2(\Omega;\mathbb S):&\, \boldsymbol \tau_h|_K\in \boldsymbol \Sigma_{\ell,k}(K) \textrm{ for each } K\in\mathcal T_h, \\
& \textrm{ all the degrees of freedom } \eqref{Hdivdivfemdof1}-\eqref{Hdivdivfemdof2} \textrm{ are single-valued}\},
\end{align*}
\begin{align*}
\Lambda_h:=\{\mu_h\in L^2(\mathcal E_h):& \, \mu_h|_e\in\mathbb P_{\ell-1}(e) \textrm{ for each } e\in\mathcal E_h^i,\\
 & \,\textrm{and } \mu_h|_e=0 \textrm{ for each } e\in\mathcal E_h\backslash\mathcal E_h^i \}.
\end{align*}

\begin{lemma}
Let $\boldsymbol \sigma_h\in\boldsymbol\Sigma_h$ and $u_h\in \mathcal Q_h$ be the solution of the mixed finite element methods \eqref{mfem1}-\eqref{mfem2}.
Let $(\widetilde{\boldsymbol\sigma}_h, \widetilde{u}_h, \lambda_h)\in\widetilde{\boldsymbol\Sigma}_h\times\mathcal Q_h\times\Lambda_h$ satisfy the partial hybridized mixed finite methods
\begin{align}
(\widetilde{\boldsymbol\sigma}_h, \boldsymbol{\tau}_h)+b_h(\boldsymbol{\tau}_h, \widetilde{u}_h, \lambda_h)&=0 \quad\quad\quad\;\; \forall~\boldsymbol{\tau}_h\in \widetilde{\boldsymbol\Sigma}_h, \label{mfemhybrid1} \\
b_h(\widetilde{\boldsymbol\sigma}_h, v_h, \mu_h)&=(f, v_h) \quad\; \forall~v_h\in \mathcal Q_h,\; \mu_h\in\Lambda_h, \label{mfemhybrid2}
\end{align}
where
\[
b_h(\boldsymbol{\tau}_h, v_h, \mu_h):=\sum_{K\in\mathcal T_h}(\div\boldsymbol \div\boldsymbol \tau_h, v_h)_K-\sum_{K\in\mathcal T_h}(\partial_{t}(\boldsymbol  t^{\intercal}\boldsymbol \tau_h\boldsymbol  n)+\boldsymbol  n^{\intercal}\boldsymbol \div\boldsymbol \tau_h,  \mu_h)_{\partial K}.
\]
Then $\widetilde{\boldsymbol\sigma}_h=\boldsymbol \sigma_h$ and $\widetilde{u}_h=u_h$.
\end{lemma}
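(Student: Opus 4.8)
The plan is to show that any solution $(\widetilde{\boldsymbol\sigma}_h,\widetilde u_h,\lambda_h)$ of the hybridized system \eqref{mfemhybrid1}--\eqref{mfemhybrid2} in fact has $\widetilde{\boldsymbol\sigma}_h\in\boldsymbol\Sigma_h$ and $(\widetilde{\boldsymbol\sigma}_h,\widetilde u_h)$ solving the original method \eqref{mfem1}--\eqref{mfem2}, and then to invoke uniqueness of the solution to \eqref{mfem1}--\eqref{mfem2}.

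The first step is to recover the missing continuity. Taking $v_h=0$ in \eqref{mfemhybrid2} and letting $\mu_h$ range over $\Lambda_h$ gives $\sum_{K\in\mathcal T_h}(\partial_{t}(\boldsymbol t^{\intercal}\widetilde{\boldsymbol\sigma}_h\boldsymbol n)+\boldsymbol n^{\intercal}\boldsymbol\div\widetilde{\boldsymbol\sigma}_h,\mu_h)_{\partial K}=0$ for all $\mu_h\in\Lambda_h$. On an interior edge $e$ the two abutting triangles induce opposite choices of $\boldsymbol n$ (and hence of $\boldsymbol t$), under which $\boldsymbol t^{\intercal}\widetilde{\boldsymbol\sigma}_h\boldsymbol n$ is invariant while both $\partial_t$ and $\boldsymbol n^{\intercal}\boldsymbol\div$ change sign; consequently the left-hand side assembles, edge by edge, into the jump of $\partial_{t_e}(\boldsymbol t^{\intercal}\widetilde{\boldsymbol\sigma}_h\boldsymbol n)+\boldsymbol n_e^{\intercal}\boldsymbol\div\widetilde{\boldsymbol\sigma}_h$ tested against $\mu_h|_e\in\mathbb P_{\ell-1}(e)$, with boundary edges dropping out since $\mu_h$ vanishes there. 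By Lemma~\ref{lm:trace}(3) this trace is a polynomial of degree at most $\ell-1$ on each edge, so its jump lies in $\mathbb P_{\ell-1}(e)$ and therefore must vanish; that is, the degree of freedom \eqref{Hdivdivfemdof3} is single-valued for $\widetilde{\boldsymbol\sigma}_h$. Together with the single-valuedness of \eqref{Hdivdivfemdof1}--\eqref{Hdivdivfemdof2} built into $\widetilde{\boldsymbol\Sigma}_h$, this yields $\widetilde{\boldsymbol\sigma}_h\in\boldsymbol\Sigma_h$, and in particular $\widetilde{\boldsymbol\sigma}_h\in\boldsymbol H(\div\boldsymbol\div,\Omega;\mathbb S)$ by Lemma~\ref{lem:Hdivdivpatching}.

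Next I would restrict the test functions to the conforming subspace $\boldsymbol\Sigma_h\subset\widetilde{\boldsymbol\Sigma}_h$. For $\boldsymbol\tau_h\in\boldsymbol\Sigma_h$ the same orientation-cancellation argument, now using that $\lambda_h\in\Lambda_h$ is single-valued on interior edges and zero on $\mathcal E_h\backslash\mathcal E_h^i$, shows $\sum_{K}(\partial_{t}(\boldsymbol t^{\intercal}\boldsymbol\tau_h\boldsymbol n)+\boldsymbol n^{\intercal}\boldsymbol\div\boldsymbol\tau_h,\lambda_h)_{\partial K}=0$, so $b_h(\boldsymbol\tau_h,\widetilde u_h,\lambda_h)=\sum_K(\div\boldsymbol\div\boldsymbol\tau_h,\widetilde u_h)_K=(\div\boldsymbol\div\boldsymbol\tau_h,\widetilde u_h)$; hence \eqref{mfemhybrid1} reduces to \eqref{mfem1}. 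Likewise $\mu_h=0$ in \eqref{mfemhybrid2} together with $\widetilde{\boldsymbol\sigma}_h\in\boldsymbol\Sigma_h$ gives $(\div\boldsymbol\div\widetilde{\boldsymbol\sigma}_h,v_h)=(f,v_h)$ for all $v_h\in\mathcal Q_h$, i.e.\ \eqref{mfem2}. Thus $(\widetilde{\boldsymbol\sigma}_h,\widetilde u_h)\in\boldsymbol\Sigma_h\times\mathcal Q_h$ is a solution of \eqref{mfem1}--\eqref{mfem2}, and by the well-posedness of that method (which follows from \eqref{eq:discretestability}) we conclude $\widetilde{\boldsymbol\sigma}_h=\boldsymbol\sigma_h$ and $\widetilde u_h=u_h$.

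The main obstacle is the first step: one must verify that the multiplier space $\Lambda_h$ is exactly rich enough to force continuity of the effective transverse shear force across interior edges, which is precisely why the choice $\mathbb P_{\ell-1}(e)$ is correct — it matches the polynomial degree of that trace identified in Lemma~\ref{lm:trace}(3). The other point requiring care is bookkeeping of edge orientations: $\boldsymbol t^{\intercal}\boldsymbol\tau\boldsymbol n$ is orientation-independent whereas $\partial_t(\boldsymbol t^{\intercal}\boldsymbol\tau\boldsymbol n)+\boldsymbol n^{\intercal}\boldsymbol\div\boldsymbol\tau$ is odd under $\boldsymbol n\mapsto-\boldsymbol n$, so the element-by-element boundary sums genuinely collapse to edge jumps (not averages), which is what makes the multiplier enforce continuity. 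One may additionally note that the hybridized system is itself well-posed, so such a solution exists and is unique, although this is not needed for the stated conclusion.
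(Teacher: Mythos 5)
Your proof is correct and follows essentially the same route as the paper: take $v_h=0$ in \eqref{mfemhybrid2} to force single-valuedness of the effective transverse shear force (using that its trace lies in $\mathbb P_{\ell-1}(e)$ by Lemma~\ref{lm:trace}(3)), conclude $\widetilde{\boldsymbol\sigma}_h\in\boldsymbol\Sigma_h$, check that the hybrid equations then collapse to \eqref{mfem1}--\eqref{mfem2}, and invoke uniqueness of the conforming method. The paper additionally proves unisolvence of the hybridized system (showing $\lambda_h=0$ when $f=0$) before drawing the same conclusion; as you correctly observe, that extra step is not needed for the statement as given, and your argument is also more explicit about the orientation cancellation that makes the element-wise boundary sums assemble into edge jumps.
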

\begin{proof}
First show the unisolvence of the discrete methods \eqref{mfemhybrid1}-\eqref{mfemhybrid2}. Assume $f$ is zero.
Due to \eqref{mfemhybrid2} with $v_h=0$, we get $\widetilde{\boldsymbol\sigma}_h\in\boldsymbol\Sigma_h$.
Hence $(\widetilde{\boldsymbol\sigma}_h, \widetilde{u}_h)\in\boldsymbol\Sigma_h\times\mathcal Q_h$ satisfies the mixed finite element methods \eqref{mfem1}-\eqref{mfem2} with $f=0$. Then $\widetilde{\boldsymbol\sigma}_h=\boldsymbol0$ and $\widetilde{u}_h=0$ follows from the unisolvence of the mixed methods \eqref{mfem1}-\eqref{mfem2}.  Thus \eqref{mfemhybrid1} becomes
\[
\sum_{K\in\mathcal T_h}(\partial_{t}(\boldsymbol  t^{\intercal}\boldsymbol \tau_h\boldsymbol  n)+\boldsymbol  n^{\intercal}\boldsymbol \div\boldsymbol \tau_h,  \lambda_h)_{\partial K}=0\quad\forall~\boldsymbol\tau_h\in\boldsymbol\Sigma_h.
\]
Now taking $\boldsymbol{\tau}_h\in\boldsymbol\Sigma_h$ such that
\begin{align*}
\boldsymbol \tau_h (\delta)&=0  \quad\;\forall~\delta\in \mathcal V_h, \\
Q_{\ell-2}^e(\boldsymbol  n^{\intercal}\boldsymbol \tau_h\boldsymbol n)&=0  \;\;\;\;\;\forall~ e\in\mathcal E_h, \\
\partial_{t_e}(\boldsymbol  t^{\intercal}\boldsymbol \tau_h\boldsymbol  n)+\boldsymbol n_e^{\intercal}\boldsymbol \div\boldsymbol \tau_h&=\lambda_h  \;\;\,\forall~ e\in\mathcal E_h, \\
(\boldsymbol \tau_h, \boldsymbol \varsigma)_K&=0  \quad\;\forall~\boldsymbol \varsigma\in\nabla^2\mathbb P_{k-2}(K)\oplus\sym (\bs x^{\perp}\otimes \mathbb P_{\ell-2}(K;\mathbb R^2))
\end{align*}
for each $K\in\mathcal T_h$, we acquire $\lambda_h=0$.

For general $f\in L^2(\Omega)$, it also follows from \eqref{mfemhybrid2} that $\widetilde{\boldsymbol\sigma}_h\in\boldsymbol\Sigma_h$. And then $(\widetilde{\boldsymbol\sigma}_h, \widetilde{u}_h)\in\boldsymbol\Sigma_h\times\mathcal Q_h$ satisfies the mixed finite element methods \eqref{mfem1}-\eqref{mfem2}. Thus $\widetilde{\boldsymbol\sigma}_h=\boldsymbol \sigma_h$ and $\widetilde{u}_h=u_h$.
\end{proof}

The space of shape functions for $\widetilde{\boldsymbol\Sigma}_h$ is still $\boldsymbol \Sigma_{\ell,k}(K)$.
The local degrees of freedom are 
\begin{align*}
\boldsymbol \tau (\delta) & \quad\forall~\delta\in \mathcal V(K), \\
(\boldsymbol  n^{\intercal}\boldsymbol \tau\boldsymbol  n, q)_e & \quad\forall~q\in\mathbb P_{\ell-2}(e),  e\in\mathcal E(K), \\
(\partial_{t}(\boldsymbol  t^{\intercal}\boldsymbol \tau\boldsymbol  n)+\boldsymbol  n^{\intercal}\boldsymbol \div\boldsymbol \tau, q)_e & \quad\forall~q\in\mathbb P_{\ell-1}(e),  e\in\mathring{\mathcal E}(K), \\
(\boldsymbol \tau, \boldsymbol \varsigma)_K & \quad\forall~\boldsymbol \varsigma\in\nabla^2\mathbb P_{k-2}(K)\oplus \sym (\bs x^{\perp}\otimes \mathbb P_{\ell-2}(K;\mathbb R^2)). 
\end{align*}
Here notation $e\in\mathring{\mathcal E}(K)$ means $(\partial_{t}(\boldsymbol  t^{\intercal}\boldsymbol \tau\boldsymbol  n)+\boldsymbol  n^{\intercal}\boldsymbol \div\boldsymbol \tau, q)_e$ are the interior degrees of freedom, i.e., $(\partial_{t}(\boldsymbol  t^{\intercal}\boldsymbol \tau\boldsymbol  n)+\boldsymbol  n^{\intercal}\boldsymbol \div\boldsymbol \tau, q)_e$ are double-valued on each edge $e\in\mathcal E_h^i$.

When $\ell=k$, we can take the following degrees of freedom
\begin{align*}
\boldsymbol \tau (\delta) & \quad\forall~\delta\in \mathcal V(K), \\
(\boldsymbol  n^{\intercal}\boldsymbol \tau\boldsymbol  n, q)_e & \quad\forall~q\in\mathbb P_{k-2}(e),  e\in\mathcal E(K), \\
(\boldsymbol  t^{\intercal}\boldsymbol \tau\boldsymbol  n, q)_e,\; (\boldsymbol  t^{\intercal}\boldsymbol \tau\boldsymbol  t, q)_e & \quad\forall~q\in\mathbb P_{k-2}(e),  e\in\mathring{\mathcal E}(K), \\
(\boldsymbol \tau, \boldsymbol \varsigma)_K & \quad\forall~\boldsymbol \varsigma\in \mathbb P_{k-3}(K;\mathbb S). 
\end{align*}
These are exactly the tensor version of the local degrees of freedom for the Lagrange element.
Therefore we can adopt the standard Lagrange element basis to implement the hybridized mixed finite element methods \eqref{mfemhybrid1}-\eqref{mfemhybrid2}.

\bibliographystyle{abbrv}
\bibliography{./paper}
\end{document}